\newtheorem{theorem}{Theorem}
\newtheorem{lemma}{Lemma}[section]
\newtheorem{proposition}{Proposition}[section]
\theoremstyle{definition}
\theoremstyle{remark}
\newtheorem{remark}{Remark}
\theoremstyle{example}
\DeclareFontFamily{OML}{rsfs}{\skewchar\font'177}
\DeclareFontShape{OML}{rsfs}{m}{n}{ <5> <6> rsfs5 <7> <8> <9>
rsfs7 <10> <10.95> <12> <14.4> <17.28> <20.74> <24.88> rsfs10 }{}
\DeclareMathAlphabet{\mathfs}{OML}{rsfs}{m}{n}
\newcommand{\BN}{{\mathbb{N}}}
\newcommand{\BT}{{\mathbb{T}}}
\newcommand{\BZ}{{\mathbb{Z}}}
\newcommand{\CB}{{\mathcal{B}}}
\newcommand{\CG}{{\mathcal{G}}}
\newcommand{\CI}{{\mathcal{I}}}
\newcommand{\CN}{{\mathcal{N}}}
\newcommand{\ind}{{\mathbbm{1}}}
\newcommand{\e}{{\bf E}}
\newcommand{\bea}{\begin{equation}\begin{aligned}}
\newcommand{\eea}{\end{aligned}\end{equation}}
\newcommand{\ep}{{\epsilon}}
\numberwithin{equation}{section} %\numberwithin{figure}{section}
\begin{document}
\title{on some threshold-one attractive interacting particle systems on homogeneous trees}

\author{Yingxin Mu}
\address[Yingxin Mu]{Peking University}
\email{muyingxin@pku.edu.cn}

\author{Yuan Zhang}
\address[Yuan Zhang]{Peking University}
\email{zhangyuan@math.pku.edu.cn}

\thanks{The authors would like to thank Dr. Rick Durrett, Dr. Thomas Liggett, and Dr. Thomas Mountford for helpful comments, and Dr. Eviatar B. Procaccia for allowing us to use his LaTeX environment settings and shortcuts}

\maketitle

\tableofcontents

\begin{abstract}
In this paper, we consider the threshold-one contact process and the threshold-one voter model w/o spontaneous death on homogeneous trees $\BT_d$, $d\ge 2$. Mainly inspired by the corresponding arguments for ordinary contact processes, we prove that the complete convergence theorem holds for these three systems under strong survival. When the systems survives weakly, complete convergence may also hold under certain transition and/or initial conditions. 
\end{abstract}

%%%%%%%%%%%%%%%%%%%%%%%%%%%%%%%%%%%%%%%%%%%%%%%%%%%%%%%%%%%%%%%%%%%%%%%%%%%%%%%%%%%
%%%%%%%%%%%%%%%%%%%%%%%%%%%%%%%%%%%%%%%%%%%%%%%%%%%%%%%%%%%%%%%%%%%%%%%%%%%%%%%%%%%
% !TEX root =TVM_0516.tex
%%%%%%%%%%%%%%%%%%%%%%%%%%%%%%%%%%%%%%%%%%%%%%%%%%%%%%%%%%%%%%%%%%%%%%%%%%%%%%%%%%%
\section{Introduction}

\label{section_1}

In this paper, we study a threshold variant of contact process, the threshold-one contact process, and two threshold variants of voter model, the threshold-one voter model w/o spontaneous death, on homogeneous trees. The {\bf threshold contact processes} and the {\bf threshold voter model} were firstly defined by Cox and Durrett in \cite{cox1991nonlinear}. Recall that for the ordinary contact process and voter model, the interacting rate, i.e., the birth rate for contact process and the flipping rates for voter model, are linear functions with respect to the number of particles of different type in the neighborhood, denoted by $N$. In these variant models, these rates are replaced by a step function of $N$ which jumps from 0 to $\lambda$ at $N=\theta$. For the threshold contact process, when $\theta\ge 2$, the model has been studied in works including \cite{mountford2009survival}, when $\theta=1$, \cite{cox1991nonlinear} proves a number of properties for the process on $\BZ^d$, including the complete convergence theorem, and \cite{penrose1996threshold} studies the asymptotic behavior of the critical value for long interacting ranges. More recently, in \cite{xue2015asymptotic}, Xue revisits this case and call it the {\bf``threshold-one contact process"}. In \cite{xue2015asymptotic}, Xue shows that the critical birth/infection rate $\lambda$ for threshold-one contact processes on high dimensional lattice $\sim (2d)^{-1}$ where $d$ is the dimension. Later,  in \cite{xue2016convergence}, he shows the convergence rate when the system dies out. For the threshold voter model, \cite{mountford2009survival} raises several conjectures and proves that the complete convergence theorem holds for the threshold-one voter model with positive spontaneous death starting from the translation invariant measure. Later, a number of works studying the threshold voter models on $\BZ^d$ appeared, including \cite{cox1991nonlinear},  \cite{liggett1999stochastic},  \cite{andjel1992clustering}, and \cite{handjani1999complete}. Especially, when $\theta=1,(d,M)\ne(1,1)$, \cite{liggett1999stochastic}  proves the coexistence and \cite{handjani1999complete} proves the complete convergence theorem.

At the same time, although contact process and its variants were originally studied on lattice, they may also be defined on other finite or infinite graphs, including the $d-$dimensional homogeneous trees $\BT_d$. For the ordinary contact process on $\BT_d$, its behavior differs from the one on lattice in the sense that there exists a second phase transition of {\bf strong/local survival} which opens the possibility that distribution of the process may still go to zero, even if the population itself persists, see \cite{morrow1994critical,durrett1995intermediate,stacey1996existence,salzano1998new,liggett1999stochastic} for details. Works such as \cite{ziezold1988critical, pemantle1992contact, liggett1995improved, liggett1999stochastic} give upper and lower bounds estimates for the critical values.  When the ordinary contact process on $\BT_d$ survives strongly, the complete convergence theorem was shown in \cite{zhang1996complete}, while an alternative approach can be found in \cite{salzano1998new,liggett1999stochastic}. In \cite{stacey2001contact} and \cite{Cranston2014}, the process restricted on a finite subtree is also discussed. When $\theta=1$, for the threshold-one contact process, \cite{xue2015asymptotic} shows that the critical value $\sim d^{-1}$ for large $d$. However, until now, little is known to our knowledge about the threshold voter model on $\BT_d$. 

As discussed above, a particle system on homogeneous trees $\BT_d$ may exhibit a second phase transition of strong/local survival. To be precise, consider $\theta^{\lambda,\BT_d}_t$ to be a translation invariant interacting system on $\{0,1\}^{\BT_d}$ that is monotone with respect to $\lambda$ and attractive with respect to the natural order on $\{0,1\}^{\BT_d}$. Denote $\textbf{0}$ as the configuration such that $\eta(x)\equiv 0$ for all $x\in\BT_d$ and $\textbf{1}$ as the configuration such that $\eta(x)\equiv 1$ for all $x\in\BT_d$. Without loss of generality, we can assume that $\theta^{\lambda,\BT_d}_t$ is monotonically increasing with respect to $\lambda$. Then the different senses of survival/critical value(s) can be defined as follows, see \cite{durrett1995intermediate} for example: 

\begin{itemize} 
\item $\theta^{\lambda,\BT_d}_t$ {\bf survives from finite population } if for all $x$
$$
\lim_{t\to\infty} P^x\left(\theta^{\lambda,\BT_d}_t\not=\textbf{0} \right)>0,
$$
otherwise it {\bf dies out}.
We define critical value
$$
\lambda_e(d)=\inf\{\lambda: \ \text{$\theta^{\lambda,\BT_d}_t$ survives from finite population}\}. 
$$
\item $\theta^{\lambda,\BT_d}_t$ {\bf survives} if 
$$
\bar\nu^\lambda=\lim_{t\to\infty} \delta_\mathbf{1} S^\lambda(t)\not=\delta_\mathbf{0},
$$
where $S^\lambda(\cdot)$ stands for the semi-group of $\theta^{\lambda,\BT_d}_t$. We define critical value
$$
\lambda_1(d)=\inf\{\lambda: \ \text{$\theta^{\lambda,\BT_d}_t$ survives}\}. 
$$
\item  $\theta^{\lambda,\BT_d}_t$ {\bf survives locally/strongly} (\cite{morrow1994critical,durrett1995intermediate,liggett1996multiple}) if for all $x$
$$
 P^x(\theta^{\lambda,\BT_d}_t(x)=1 \ i.o.)>0.
$$
We define critical value
$$
\lambda_2(d)=\inf\{\lambda: \ \text{$\theta^{\lambda,\BT_d}_t$ survives strongly}\}. 
$$
\end{itemize}

%% template for label/reference of equations and theorems
%%\begin{equation}
%%\label{sample_equation_1}
%%x_1+x_2=x_3
%%\end{equation}
%%Now we refer to Equation \eqref{sample_equation_1}. 
%%
%%\begin{theorem}
%%\label{theorem_sample}
%%This is a theorem! 
%%\end{theorem}
%%
%%Theorem \ref{theorem_sample}. 
%%

%%%%%%%%%%%%%%%%%%%%%%%%%%%%%%%%%%%%%%%%%%%%%%%%%%%%%%%%%%%%%%%%%%%%%%%%%%%%%%%%%%%
%%%%%%%%%%%%%%%%%%%%%%%%%%%%%%%%%%%%%%%%%%%%%%%%%%%%%%%%%%%%%%%%%%%%%%%%%%%%%%%%%%%
%%%%%%%%%%%%%%%%%%%%%%%%%%%%%%%%%%%%%%%%%%%%%%%%%%%%%%%%%%%%%%

% !TEX root = TVM_0516.tex
%%%%%%%%%%%%%%%%%%%%%%%%%%%%%%%%%%%%%%%%%%%%%%%%%%%%%%%%%%%%%%%%%%%%%%%%%%%%%%%%%%%

\section{Statement of Main Theorems}
\label{section_2.1}
\subsection{Results for the threshold-one contact process}
\label{subsection3.1}
 Denote $\xi^{\lambda,\BT_d}_t$ as the threshold-one contact process on $\BT_d$. Observe that the system is increasing with respect to $\lambda$, additive, and attractive with respect to the natural order of configurations. And that $\delta_{\textbf{0}}$ is the only absorbing state/trivial invariant distribution. Recalling the definitions of the different critical values, it is easy to see that $\lambda_2(d)\ge \max\{\lambda_1(d),\lambda_e(d)\}$. In Section \ref{section_2} one will see that $\tilde \xi^{\lambda,\BT_d}_t$ is the time reversal of $\xi^{\lambda,\BT_d}_t$ and thus $\tilde \xi^{\lambda,\BT_d}_t$ survives from finite population if and only if $\xi^{\lambda,\BT_d}_t$ survives. Unlike the ordinary contact process which is self dual, it is clear that $\tilde \xi^{\lambda,\BT_d}_t$ is {\bf not} identically distributed as $\xi^{\lambda,\BT_d}_t$. So we first prove:

\begin{theorem}
\label{theorem duality}
For any $\lambda>0$, $\xi^{\lambda,\BT_d}_t$ survives if and only if it survives from finite population. This also implies $\lambda_e(d)=\lambda_1(d)$. 
\end{theorem}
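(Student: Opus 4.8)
The plan is to reduce the whole statement to two inputs: the time‑reversal duality already recorded just before the theorem, and a stochastic domination between $\xi_t:=\xi^{\lambda,\BT_d}_t$ and its dual $\tilde\xi_t:=\tilde\xi^{\lambda,\BT_d}_t$. Fix $\lambda>0$. The additive duality has the form $P^A(\xi_t\cap B\neq\emptyset)=P^B(\tilde\xi_t\cap A\neq\emptyset)$ for finite sets $A,B$; specializing ($B=\{x\}$ against the full configuration, letting $A\uparrow\BT_d$) gives the statement recalled above, namely (c) $\xi_t$ survives if and only if $\tilde\xi_t$ survives from finite population. Since time reversal is an involution, the reversal of $\tilde\xi_t$ is again $\xi_t$, so the identical argument applied in the opposite order yields the mirror statement (d) $\tilde\xi_t$ survives if and only if $\xi_t$ survives from finite population. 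I will use (c) and (d) as black boxes.

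The second input is the domination $\tilde\xi_t\succeq\xi_t$ whenever both are started from the same configuration. To get it I would first read off the dual dynamics from the graphical representation: reversing the target‑based (\emph{pull}) infection arrows of the threshold‑one rule turns them into source‑based (\emph{push}) arrows, so $\tilde\xi_t$ kills each occupied site at rate $1$ and, at rate $\lambda$, lets each occupied site infect its \emph{entire} neighbourhood at once. Thus the downward (death) rates of the two processes coincide, while the upward (birth) rate at a vacant site $z$ is $\lambda\,\ind[\exists\text{ occupied neighbour}]$ for $\xi_t$ and $\lambda\cdot\#\{\text{occupied neighbours}\}$ for $\tilde\xi_t$, the latter dominating the former on ordered configurations. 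A thinning coupling then keeps $\xi_t\subseteq\tilde\xi_t$: share the death marks, and realize each birth of $\xi_t$ inside an infection event of $\tilde\xi_t$ at the same site (these occur at rate $\lambda\cdot\#\{\tilde\xi_t\text{-occupied neighbours}\}\ge\lambda$ precisely when $\xi_t$ has an occupied neighbour), thinning them down to the threshold‑one rate $\lambda$.

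With (c), (d) and the domination in hand, both directions follow for every fixed $\lambda$, with no appeal to critical values. If $\xi_t$ survives from finite population, then from any finite $A$ the inclusion $\tilde\xi_t^A\supseteq\xi_t^A$ gives $P^A(\tilde\xi_t\neq\emptyset)\ge P^A(\xi_t\neq\emptyset)$, so $\tilde\xi_t$ survives from finite population, whence $\xi_t$ survives by (c). Conversely, if $\xi_t$ survives, then starting from $\mathbf 1$ the monotonicity gives $P(\tilde\xi_t^{\mathbf 1}(x)=1)\ge P(\xi_t^{\mathbf 1}(x)=1)$, so $\tilde\xi_t$ survives, whence $\xi_t$ survives from finite population by (d). This is exactly the claimed equivalence; since the two survival regimes in $\lambda$ therefore coincide and each is an up‑set in $\lambda$, their thresholds agree, giving $\lambda_e(d)=\lambda_1(d)$.

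The one genuinely technical step — and the place I expect the real work to sit — is the domination $\tilde\xi_t\succeq\xi_t$. The obstacle is structural: $\xi_t$ is a pull dynamics while $\tilde\xi_t$ is a push dynamics that flips several coordinates simultaneously, so $\tilde\xi_t$ is not a spin system and no naive shared‑clock coupling is available; the comparison has to be carried out through the thinning construction above (or, equivalently, by invoking the general monotone‑coupling theorem for attractive systems after verifying the ordering of the upward and downward rates). Everything else is bookkeeping with the duality, and notably no quantitative estimate on any of the critical values is required.
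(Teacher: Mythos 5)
Your reduction to the two duality statements (c) and (d) is fine, but the load-bearing step --- the stochastic domination $\xi^{\lambda,A,\BT_d}_t\subseteq\tilde\xi^{\lambda,A,\BT_d}_t$ --- is false, and with it both directions of your argument collapse. Start both processes from $A=\{o\}$ and test against the increasing function $g(\xi)=\ind_{\{|\xi|\ge 2\}}$. For the forward process each of the $d+1$ vacant neighbours of $o$ is born independently at rate $\lambda$, so $P^o(|\xi^{\lambda,\BT_d}_t|\ge 2)=(d+1)\lambda t+O(t^2)$; for the dual the only way to leave $\{o\}$ upward is the single rate-$\lambda$ event that adds all of $\CN_o$ at once, so $\tilde P^o(|\tilde\xi^{\lambda,\BT_d}_t|\ge 2)=\lambda t+O(t^2)$. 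For small $t$ the forward process gives the \emph{larger} expectation of an increasing function, so no coupling with $\xi_t\subseteq\tilde\xi_t$ can exist. The flaw in your thinning construction is exactly the one you flagged as ``the place the real work sits'': comparing the single-site marginal birth rates ($\lambda\cdot\#\{\text{occupied neighbours}\}$ versus $\lambda\,\ind[\ge 1]$) is not sufficient when transitions flip several coordinates simultaneously. The dual's infection events at distinct vacant sites $z_1,z_2\in\CN_y$ are the \emph{same} Poisson event at $y$; if $y$ is the unique occupied neighbour of both, your acceptance probability is $1$ for each, forcing simultaneous births in $\xi_t$, which occur with probability zero under the true dynamics --- equivalently, the total upward event rate of the dual from $\{o\}$ is $\lambda$, which cannot be thinned into the total forward birth rate $(d+1)\lambda$. (Note also that the two inequalities you extract from the domination are dual to each other: $P(\tilde\xi^{\mathbf 1}_t(x)=1)\ge P(\xi^{\mathbf 1}_t(x)=1)$ is, by \eqref{p}, equivalent to $P^x(\xi_t\ne\mathbf 0)\ge\tilde P^x(\tilde\xi_t\ne\mathbf 0)$, the reverse of what domination from $\{x\}$ would give --- another sign the comparison cannot go one way.)

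What survives of the intuition is precisely the identity the paper actually uses: the two processes have the same \emph{expected} cardinality, $E^o|\xi^{\lambda,\BT_d}_t|=E^o|\tilde\xi^{\lambda,\BT_d}_t|$, because the two-point functions $P^o(x_n\in\xi_t)$ and $\tilde P^o(x_n\in\tilde\xi_t)$ coincide by \eqref{20} and the symmetry of $\BT_d$. Equality of expectations is compatible with neither process dominating the other, and it is all one needs once survival from finite population is characterized by the exponential growth rate $\Phi(\lambda)$ of $E^o|\xi_t|$ (Lemmas \ref{le31} and \ref{le33}, adapted from \cite{liggett1999stochastic}): then $\Phi=\tilde\Phi$ gives $\lambda_e=\tilde\lambda_e$, the behaviour at the critical value is controlled by Lemma \ref{le33}, and the duality statement (c) closes the loop. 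So the growth-rate machinery you hoped to avoid is not optional here; it is the substitute for the domination that does not hold.
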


With Theorem \ref{theorem duality}, we will no longer discriminate between survival and survival from finite population, and from now on denote such condition by {\bf survival}. The following two results give sufficient conditions for $\xi^{\lambda,\BT_d}_t$ to survive: 

\begin{theorem}
\label{survive d}
For any $d\ge 2$, $\lambda_1(d)<1/(d-1)$. 
\end{theorem}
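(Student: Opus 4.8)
The plan is to exploit monotonicity in $\lambda$ together with Theorem \ref{theorem duality}: since $\xi^{\lambda,\BT_d}_t$ is increasing in $\lambda$, it suffices to establish survival at the single value $\lambda=1/(d-1)$, and by Theorem \ref{theorem duality} this is the same as showing that, started from one infected vertex, the process has positive probability of never reaching $\textbf{0}$. I would work in the standard graphical representation, with independent rate-$1$ recovery clocks at the vertices and independent rate-$\lambda$ infection arrows along each oriented edge, and try to exhibit inside $\xi^{\lambda,\BT_d}_t$ a supercritical branching structure.

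First I would record the naive comparison, both to fix constants and to see what must be improved. Rooting $\BT_d$ so that each vertex has $d$ children, track only ``downward'' infections: an infected vertex remains infected for an $\mathrm{Exp}(1)$ time and during that time infects a given child with probability $\lambda/(1+\lambda)$, so the embedded Galton--Watson process has mean offspring $d\,\lambda/(1+\lambda)$. At $\lambda=1/(d-1)$ this equals exactly $1$: the naive process is \emph{critical} and dies out, consistent with the fact that if one literally deletes all child-to-parent infections the seed cannot persist. Hence the entire content of the theorem lives in the re-infection feedback that the naive bound discards.

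The key step is therefore to capture this feedback. When a vertex $x$ and one of its children $y$ are simultaneously infected, each can re-infect the other across the edge $\{x,y\}$, so the total time $\tau$ that $x$ stays infected is no longer $\mathrm{Exp}(1)$ but is stochastically strictly larger (the internal two-vertex dynamics is a birth--death chain on $\{0,1,2\}$ infected, whose mean total infectious time I would compute explicitly and find to exceed that of a single site). I would build a branching process whose particles are such infected parent--child units; the extended window $\tau$ gives, for each of the $d$ forward children, infection probability $1-\mathbf{E}e^{-\lambda\tau}$, and since $\tau$ is stochastically strictly larger than $\mathrm{Exp}(1)$ we get $\mathbf{E}e^{-\lambda\tau}<1/(1+\lambda)$ and hence mean offspring $d\,(1-\mathbf{E}e^{-\lambda\tau})>d\,\lambda/(1+\lambda)=1$. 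This strict surplus at exactly $\lambda=1/(d-1)$ is what should drive survival.

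The main obstacle is that the contact process is not itself a branching process: the infected regions of a unit and of its offspring units overlap (consecutive units share a vertex, and the very re-infections that extend $\tau$ involve the neighbours that are supposed to be independent offspring), so the favorable computation above is not literally a statement about independent particles. The heart of the argument is to remove this dependence without destroying the surplus --- for instance by placing the units on alternating levels of the tree and allotting each unit a disjoint family of Poisson clocks, or by a renormalization/oriented-percolation comparison in which each unit only has to seed units two levels below it in disjoint subtrees. I would then verify that after this decoupling the per-generation mean still exceeds $1$ at $\lambda=1/(d-1)$, so that the resulting process dominates a supercritical Galton--Watson process (equivalently supercritical oriented percolation) and survives with positive probability, giving survival of $\xi^{\lambda,\BT_d}_t$.
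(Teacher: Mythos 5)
Your proposal is a plan rather than a proof, and the step it defers --- ``I would then verify that after this decoupling the per-generation mean still exceeds $1$'' --- is precisely where the difficulty lives, and where the sketch as written breaks down. First, the offspring count is not consistent: if the unit is a parent--child pair $(x,y)$ with $y$ supplying the reinfection that extends $x$'s lifetime to $\tau$, then $y$ is one of the $d$ children of $x$, so only $d-1$ children remain as candidate offspring in disjoint subtrees; $(d-1)\bigl(1-\mathbf{E}e^{-\lambda\tau}\bigr)$ is close to $\frac{d-1}{d}<1$, not to $d\,\lambda/(1+\lambda)=1$. To recover $d$ offspring you must also harvest the children of $y$, which is exactly where the overlap/dependence you flag reappears. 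Second, the surplus is quantitatively fragile: a direct computation in the two-vertex system $\{x,y\}$ (rates $1$ for recovery, $\lambda$ across the edge) gives $\mathbf{E}[\text{total infected time of }x]=\frac{1+\lambda}{2}+\frac{1}{2(1+\lambda)}=1+O(\lambda^2)$, i.e.\ the lifetime extension is only second order in $\lambda=1/(d-1)$, while any scheme that seeds units two levels down in disjoint subtrees, or restricts each unit to a designated subfamily of clocks, loses factors of order $\lambda$ or worse. Since the naive comparison sits exactly at criticality, there is no slack to absorb these losses, and nothing in the proposal shows the renormalized mean stays above $1$. So the heart of the theorem is still missing.

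For contrast, the paper does not build any branching structure. It extracts the surplus from the isoperimetry of the tree: for any nonempty finite $A$ one has $|\partial A|\ge (d-1)|A|+2$, so at $\lambda=1/(d-1)$ the drift of the cardinality is $\lambda|\partial A|-|A|\ge \frac{2}{d-1}>0$ --- a \emph{constant}, not proportional to $|A|$. This alone does not give positive survival probability of the comparison chain; instead the paper shows the dominated birth--death chain has infinite expected absorption time (via a reflection-principle estimate for the embedded simple random walk), hence $E^o|\xi^{\lambda,\BT_d}_t|\to\infty$, and then invokes the exponential-growth characterization of survival ($\Phi(\lambda)>1$ from Lemmas \ref{le31} and \ref{le33}). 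That last spectral step is what converts an expectation statement into survival, and it has no counterpart in your local branching construction. If you want to pursue your route, you would need either a genuinely supercritical renormalized block construction with explicit bookkeeping of which clocks each unit owns, or to switch to an expectation/growth-rate criterion as the paper does.
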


\begin{remark}
Recall that $\lambda_1$ is the critical value for the existence of non-trivial stationary distribution. \cite[Corollary 3.2 \& Theorem 4.1]{xue2015asymptotic} proves that $\lambda_1(d)\in [1/(d+1), 1/(d-1)]$. With Theorem \ref{theorem duality}, one can see Theorem \ref{survive d} slightly improves their result by showing survival on the common upper bound. 
\end{remark}

Note that the condition of survival in Theorem \ref{survive d} is not necessarily sharp for each given $d$, although it has been shown in \cite{xue2015asymptotic} to be asymptotically sharp as $d\to\infty$. In fact, when $d=2$, $\xi^{\lambda,\BT_d}_t$ survives for substantially smaller $\lambda$: 
\begin{theorem}
\label{survive 2}
 $\lambda_1(2)<0.637$.  
\end{theorem}

The following theorem gives an almost complete picture on the complete convergence theorem of threshold-one contact process on $\BT_d$. In words, complete convergence theorem holds if and only if the $\xi^{\lambda,\BT_d}_t$ dies out or survives strongly. When the process survives weakly, we can prove the convergence theorem for any translation invariant $\mu$ or Dirac measure concentrating on the dense configuration. 
Recall the definition of dense configuration in \cite{griffeath1978limit} such that $\xi$ is dense if it satisfies 
\bea\label{d}
\exists\text{ $N>0$, for any $x\in\BT_d$, there exists $ y\in B_x(N)$ such that $\xi(y)=1$}. 
\eea where $B_x(N)$ is the ball of radius $N$ centered at $x$.
Note that the Dirac measure $\delta_\xi$ when $\xi$ is dense doesn't have to be translation invariant, and under the translation invariant measure, the configuration $\xi$ doesn't have to be dense with probability one, so that we need to give these two cases separately. \begin{theorem}
\label{complete convergence}
\begin{enumerate}
\item If $\lambda>\lambda_2$, for any initial configuration $\xi$,
\bea\label{ccc}
\lim_{t\to\infty}\delta_{\xi} S^\lambda(t)= [1-\alpha_\xi (\lambda)]\bar \nu^\lambda+\alpha_\xi(\lambda) \delta_{\textbf{0}},
\eea
where $ \alpha_\xi(\lambda)=P^{\xi}(  {\xi}^{\lambda,\BT_d}_t=  \textbf{0}\text{ for some }t)$, $S^\lambda(t)$ is the Markov semigroup and $\bar\nu^\lambda $ is the maximal invariant measure of ${\xi}^{\lambda,\BT_d}_t$. 

\item If $\lambda_1<\lambda\le\lambda_2$, for any $\mu$ satisfying $\mu$ is  translation invariant or $\mu=\delta_\xi$ where $\xi$ is dense, we have
$$
\lim_{t\to\infty}{\mu} S^\lambda(t)=[1- \alpha_\mu (\lambda)]\bar \nu^\lambda+\alpha_\mu(\lambda) \delta_{\textbf{0}},
$$where $ \alpha_\mu(\lambda)=P^\mu(\xi=\textbf{0})$.
\item If $\lambda\le\lambda_1$,
for any initial configuration $\xi$,
$$
\lim_{t\to\infty}\delta_{\xi} S^\lambda(t)=\delta_{\textbf{0}}.
$$

\end{enumerate}
\end{theorem}
{\begin{remark}\label{exist}
When $\lambda_2(d)$ does not exist, the theorem makes no sense. In fact, similar to the ordinary contact process on $\BT_d$, we can prove that $\lambda_2(d)\le\frac{1}{\sqrt{d}-1}$.
\end{remark}}
\subsection{Results for the threshold-one voter model with spontaneous death}
 Denote $\eta_t^{\varepsilon,\BT_d}$ as the threshold-one voter model with positive spontaneous death rate $\varepsilon$ on $\BT_d$, where when $\varepsilon=0$, it is the threshold-one voter model. Observe that $\eta_t^{\varepsilon,\BT_d}$ is monotone with respect to $\varepsilon$ and is attractive with respect to the natural order of configurations, so that with the definition of the three types of survivals, we can define three critical values with respect to $\varepsilon$.
\bea
&\varepsilon_1(d)=\sup\{\varepsilon: \text{$\eta^{\varepsilon,\BT_d}_t$ survives }\}. 
\\&\varepsilon_e(d)=\sup\{\varepsilon: \text{$\eta^{\varepsilon,\BT_d}_t$ survives from finite population}\}. 
\\&\varepsilon_2(d)=\sup\{\varepsilon: \text{$\eta^{\varepsilon,\BT_d}_t$ survives strongly}\}. 
\eea
Like the threshold-one contact process, we have $\varepsilon_2(d)\le \max\{\varepsilon_1(d),\varepsilon_e(d)\}$. Because the dual process is not attractive, there is no clear indication to our knowledge whether $\varepsilon_1(d)=\varepsilon_e(d)$. Since $\eta^{\varepsilon,\BT_d}_t$ can dominate the threshold-one contact process $\xi^{1,\BT_d}_t$, by Theorem \ref{survive d} and Theorem \ref{survive 2}, we can get that for any $d\ge 2$, $\varepsilon_e(d)>0$ and $\varepsilon_1(d)>0$, and by Remark \ref{exist}, one can prove that $\varepsilon_2(d)>0$ when $d\ge 5$. Next, for the the threshold-one voter model with spontaneous death, we also have the corresponding complete convergence theorem.

\begin{theorem}\label{thm62}

\begin{enumerate}
\item If $\varepsilon<\varepsilon_2$, for any initial configuration $\eta$,
$$
\lim_{t\to\infty}\delta_{\eta} S^\varepsilon(t)=[1- \alpha_\eta (\varepsilon)]\bar\nu^\varepsilon+\alpha_\eta(\varepsilon)\delta_{\textbf{0}},
$$
where $ \alpha_\eta(\varepsilon)=P^{\eta}(  {\eta}^{\varepsilon,\BT_d}_t=  \textbf{0}\text{ for some }t)$,  $S^\varepsilon(t)$ is the Markov semigroup and $\bar\nu^\varepsilon $ is the upper invariant measure of ${\eta}^{\varepsilon,\BT_d}_t$. 

\item If $\varepsilon_1\ge\varepsilon\ge\varepsilon_2$, for any $\mu$ satisfying $\mu$ is  translation invariant or $\mu=\delta_\eta$ where $\eta$ is dense, we have
$$
\lim_{t\to\infty}{\mu} S^\varepsilon(t)=[1- \alpha_\mu ]\bar \nu^\varepsilon+\alpha_\mu  \delta_{\textbf{0}},
$$where $ \alpha_\mu =\mu(\eta=\textbf{0})$.
\item If $\varepsilon>\varepsilon_1$, for any initial configuration $\eta$, we have
$$
\lim_{t\to\infty}\delta_{\eta} S^\varepsilon(t)= \delta_{\textbf{0}}.
$$
\end{enumerate}

\end{theorem}
Next we will consider the threshold-one voter model on $\BT_d$, we denote the process by $\zeta^{\BT_d}_{t}$. Since $\zeta^{\BT_d}_{t}$ can dominate the threshold-one contact process with parameter 1, by Theorem \ref{survive d} and \ref{survive 2}, and Proposition 2.11 in \cite{liggett1999stochastic}, we have 
\begin{theorem}\label{thm71}
For any $d\ge 2$, $\zeta^{\BT_d}_{t}$ coexists, i.e., there exists a nontrivial invariant measure.
\end{theorem}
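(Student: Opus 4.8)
The plan is to reduce coexistence to the survival of a dominated contact process, exactly along the scheme indicated before the statement. The threshold-one voter model $\zeta^{\BT_d}_t$ and the threshold-one contact process $\xi^{1,\BT_d}_t$ (with $\lambda=1$) share the same birth mechanism: a site in state $0$ switches to $1$ at rate $1$ precisely when it has at least one neighbour in state $1$. They differ only in the death mechanism: in the contact process a site in state $1$ dies at rate $1$ unconditionally, whereas in the voter model a site in state $1$ switches to $0$ at rate $1$ only when it disagrees with at least one neighbour, i.e. only when it has an empty neighbour. Thus the voter dynamics suppresses exactly those deaths of $1$'s that would occur in the interior of an all-$1$ region, so one expects $\zeta^{\BT_d}_t$ to dominate $\xi^{1,\BT_d}_t$ from the same initial configuration.

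I would make this precise with a graphical construction shared by both processes: attach to each site $x$ two independent rate-$1$ Poisson clocks, a birth clock and a death clock. At a birth ring at $x$ either process, if currently in state $0$ with at least one occupied neighbour, sets $x$ to $1$; at a death ring at $x$ the contact process vacates $x$ whenever $\xi(x)=1$, while the voter model sets $\zeta(x)$ to $0$ only if $\zeta(x)=1$ and $x$ has an empty neighbour. Starting from $\zeta_0=\xi_0$, I claim $\zeta_t\ge\xi_t$ is preserved through every ring. A death ring at $x$ forces $\xi_t(x)=0$, since the contact death is unconditional, so it cannot threaten the order; the only delicate case is a birth ring at which the contact process births, i.e. $\xi(x)=0$ has an occupied neighbour $y$. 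Then by the inductive hypothesis $\zeta(y)\ge\xi(y)=1$, so $x$ disagrees with $y$ in the voter configuration as well and the birth ring ensures $\zeta(x)=1$ afterward. Hence $\zeta_t\ge\xi^{1,\BT_d}_t$ pointwise, and by Theorem \ref{survive d} together with monotonicity in $\lambda$ (since $1\ge 1/(d-1)$ for all $d\ge 2$, with Theorem \ref{survive 2} sharpening the $d=2$ case), the contact process $\xi^{1,\BT_d}_t$ survives; therefore so do the $1$'s of $\zeta^{\BT_d}_t$.

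Finally I would invoke Proposition 2.11 of \cite{liggett1999stochastic} to convert this persistence into a genuinely nontrivial invariant measure. The threshold-one voter model is invariant under the global relabelling $\eta\mapsto\mathbf{1}-\eta$, so the same domination applied to the complementary process shows that the $0$'s survive as well; the persistence of both species keeps the disagreement density bounded away from $0$, which rules out clustering toward any mixture of $\delta_{\mathbf{0}}$ and $\delta_{\mathbf{1}}$ and yields an invariant law charging configurations with both states. The main obstacle is precisely this last implication: survival of the dominated contact process only guarantees that occupied sites never vanish, and the real content lies in upgrading non-extinction, together with the $0\!\leftrightarrow\!1$ symmetry, to the existence of a nontrivial stationary distribution. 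One must also check that the hypotheses of Proposition 2.11, typically formulated on $\BZ^d$, remain valid in the nonamenable setting of $\BT_d$, where the lack of a tractable dual for the threshold voter model makes this comparison argument essentially the only available route.
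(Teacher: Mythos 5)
Your proposal matches the paper's argument exactly: the paper proves Theorem \ref{thm71} in the single sentence preceding its statement, by noting that $\zeta^{\BT_d}_t$ dominates the threshold-one contact process with parameter $1$, which survives by Theorems \ref{survive d} and \ref{survive 2}, and then citing Proposition 2.11 of \cite{liggett1999stochastic}. Your explicit coupling and monotonicity check are correct, and the worry in your last paragraph is unnecessary --- the paper invokes Liggett's Proposition 2.11 in exactly the same black-box way, that result being formulated for threshold voter models in terms of general neighborhoods rather than specifically on $\BZ^d$.
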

{Then by Remark \ref{exist} and the domination of $\xi^{\lambda,\BT_d}_{t}$, we can get that $\zeta^{\BT_d}_{t}$ survives strongly when $d\ge 5$}.

 Define 
$$
\beta(d)=\lim\limits_{n\to\infty}[P^o(x_n\in \zeta_t^{\BT_d}\text{ for some }t)]^{1/n}.
$$
Then $\beta(d)$ exists by Theorem B22 in \cite{liggett1999stochastic}.
Like the definition in \eqref{d}, we define the configuration to be doubly dense if it satisfies
\bea\label{dense}
\exists\text{ $N>0$, for any $x\in\BT_d$, there exists $ y,z\in B_x(N)$ such that $\zeta(y)=1,\zeta(z)=0$}. 
\eea
\begin{theorem}
\begin{enumerate}\label{thm72}
\item When $\zeta_t^{\BT_d}$ survives strongly, for any initial configuration $\zeta$,
 $$\lim_{t\to\infty}\delta_{\zeta} U(t)= \alpha_\zeta\delta_0+\beta_\zeta\delta_1+(1-\alpha_\zeta-\beta_\zeta)\mu^d_{1/2} $$ 
  where $\alpha_\zeta=P^\zeta( {\zeta}^{\BT_d}_t=  \textbf{0}\text{ for some }t)$, $\beta_\zeta=P^\zeta( {\zeta}^{\BT_d}_t=  \textbf{1}\text{ for some }t)$, $U(t)$ is the Markov semigroup of $\zeta_t^{\BT_d}$, and $\mu^d_{1/2} $ is the limiting distribution of the process starting from the product distribution with parameter 1/2.
\item When $\beta(d)< \frac{1}{\sqrt{d}}$, for any $\mu$ satisfying $\mu$ is translation invariant or $\mu=\delta_\zeta$ where $\zeta$ is doubly dense,
 $$
\lim_{t\to\infty}\mu U(t)= \alpha_{\mu}\delta_{\textbf{0}}+\beta_{\mu}\delta_{\textbf{1}}+(1- \alpha_{\mu}- \beta_{\mu})\mu^d_{1/2},
$$ where $\alpha_\mu=\mu(\zeta=\textbf{0})$, $\beta_\mu=\mu(\zeta=\textbf{1})$.
 \end{enumerate}
 \end{theorem}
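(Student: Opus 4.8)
The plan is to prove the complete convergence theorem for the threshold-one voter model $\zeta_t^{\BT_d}$ (Theorem \ref{thm72}) by adapting the strategy used for the contact process in Theorem \ref{complete convergence}, but now accounting for the two absorbing states $\textbf{0}$ and $\textbf{1}$ and the symmetry $\zeta\mapsto\textbf{1}-\zeta$ of the voter dynamics. The key structural fact is that the voter model has \emph{two} conserved-type extremal invariant measures, namely the point masses $\delta_{\textbf{0}},\delta_{\textbf{1}}$, together with the nontrivial coexistence measure obtained from the symmetric product measure $\mu_{1/2}$. So the first step is to set up the dual/interface description of $\zeta_t^{\BT_d}$: the boundary between $0$'s and $1$'s performs a system of annihilating-type motions, and the events $\{\zeta_t^{\BT_d}=\textbf{0}\text{ for some }t\}$ and $\{\zeta_t^{\BT_d}=\textbf{1}\text{ for some }t\}$ are the two ways the interface can be consumed. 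I would define, exactly as in the contact-process proof, the hitting probabilities $\alpha_\zeta, \beta_\zeta$ and show they are the limiting masses placed on $\delta_{\textbf 0},\delta_{\textbf 1}$.

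For part (1), the hypothesis is strong survival. The plan is to mirror the contact-process argument: use the graphical/dual representation to reduce convergence of $\delta_\zeta U(t)$ to the asymptotic behavior of finite sets of dual coalescing walks, and show that conditioned on neither $\textbf{0}$ nor $\textbf{1}$ being reached, the process locally looks like the process started from $\mu_{1/2}$. Concretely, for any finite cylinder event determined by sites in a ball $B_x(R)$, I would run the dual back from those sites and argue that, under strong survival, with probability tending to one the dual points have mixed thoroughly so that the configuration restricted to $B_x(R)$ converges in distribution to its $\mu_{1/2}$-law, precisely on the complement of the two trapping events. This gives the claimed decomposition $\alpha_\zeta\delta_{\textbf 0}+\beta_\zeta\delta_{\textbf 1}+(1-\alpha-\beta)\mu_{1/2}$. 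Here I would invoke Theorem \ref{thm71} (coexistence) to guarantee $\mu_{1/2}$ is genuinely nontrivial, and the strong-survival input (available for $d\ge 5$ by the remark following Theorem \ref{thm71}) to control the recurrence of the interface near any fixed vertex.

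For part (2), the hypothesis $\beta(d)<1/\sqrt d$ replaces strong survival; here $\beta(d)$ is the exponential decay rate of the probability that the influence of the origin reaches distance $n$ along the tree, defined just before the theorem. The idea is that $\beta(d)<1/\sqrt d$ is exactly the integrability threshold making the expected number of distance-$n$ vertices whose opinion can be traced back to a fixed vertex summable, since $\BT_d$ has roughly $d^n$ vertices at distance $n$ and the relevant second-moment/branching-number computation needs $(\text{growth})\cdot\beta(d)^2<1$, i.e. $d\,\beta(d)^2<1$. The plan is to run the dual coalescing walks from a finite window and use this summability to show that distinct dual trajectories a.s. either coalesce or separate to infinity without interacting, so that the limiting law factorizes and, for a doubly-dense or translation-invariant $\mu$, the only surviving contributions are the two trapped masses and the coexistence measure $\mu_{1/2}$. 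The double-density condition \eqref{dense} is used, as the density condition \eqref{d} was in the contact case, to rule out that $\mu$ is supported near $\textbf 0$ or $\textbf 1$ and to seed enough $0$-$1$ boundaries everywhere.

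The main obstacle I anticipate is precisely the interface-control step in part (2) under the sharp condition $\beta(d)<1/\sqrt d$: unlike the contact process, the voter dual is not an attractive or additive system, so one cannot simply appeal to monotonicity or self-duality. I expect the delicate work to be a second-moment argument controlling pairwise encounters of dual walks on the tree, showing that the expected number of ``dangerous'' coalescence/interaction events stays bounded exactly when $d\,\beta(d)^2<1$, and then upgrading this $L^2$ control to the almost-sure/distributional statement via a Borel--Cantelli or truncation argument over balls $B_x(N)$ of growing radius. Translating $\beta(d)$, which is defined through a hitting probability, into a usable bound on the pairwise meeting rate of the backward walks is the crux, and is where the tree geometry (branching number $d$ versus the $\sqrt d$ appearing in the hypothesis) enters decisively.
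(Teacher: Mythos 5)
There is a genuine gap, and it starts at the level of which duality you are using. The threshold-one voter model is not the linear voter model: it is not additive, so it has no coalescing dual, only the annihilating (parity) duality \eqref{cd}. Your plan for both parts rests on running ``dual coalescing walks'' backward from a finite window and arguing that they ``mix,'' which is the picture for the ordinary voter model and is simply not available here. In the paper's framework the entire convergence statement must be reduced to showing $P(\zeta^{\BT_d}_{2+t}\cap\zeta_B\text{ is odd})\to\tfrac12\tilde P^{\zeta_B}(\Omega_\infty)$, and the mechanism that forces the parity to equidistribute is concrete and nontrivial: since there is no spontaneous death to randomize individual sites, the paper works with the interface set $\bar\zeta^{\BT_d}_1$ (sites disagreeing with their parent), shows $|\bar\zeta^{\BT_d}_1\cap\tilde\zeta^{\BT_d}_t|\to\infty$ on the dual-survival event (this is where $\beta(d)\ne 1/\sqrt d$ enters, via Lemmas \ref{lem71}--\ref{lem73}, which establish non-positive-recurrence of the shape $[\tilde\zeta^{\BT_d}_t]$ and hence $|\tilde\zeta^{\BT_d}_t|\to\infty$ on survival), and then applies a $2$-dependence argument plus Lemma~2.3 of Bramson--Griffeath to isolated interface sites to get the limit $\tfrac12$. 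Your second-moment plan for ``pairwise encounters of dual walks'' addresses a different (coalescing) object and would not produce this parity statement; note also that $\beta(d)<1/\sqrt d$ is used in the paper not as an $L^2$ threshold for pair encounters but to rule out local survival of the dual (Lemma \ref{lem71}) and positive recurrence of its equivalence class (Lemma \ref{lem72}).

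The second gap is in part (1). For an arbitrary initial configuration $\zeta$ the direct dual argument cannot be run at all (an arbitrary $\zeta$ need not be doubly dense, so the uniform lower bound $\inf_x P^\zeta(\bar\zeta^{\BT_d}_1(x)=1)>0$ fails), and the paper explicitly abandons it: Proposition \ref{prop71} covers only translation-invariant or doubly dense initial data. The actual proof of part (1) sandwiches $\zeta^{\BT_d}_t$ between the threshold-one voter model with small spontaneous death $\eta^{\delta,\BT_d}_t$ and the one with small spontaneous birth $\hat\zeta^{\delta,\BT_d}_t$ (possible because $\delta_2(d)>0$), invokes the already-proved complete convergence Theorem \ref{thm62} for those processes, and then combines Lemma \ref{lem44}, weak compactness of subsequential limits squeezed between the two limiting measures, and Proposition \ref{prop71} applied to those translation-invariant limits. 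Without some version of this comparison step your proposal has no way to handle general $\zeta$; invoking coexistence (Theorem \ref{thm71}) or strong survival for $d\ge 5$ does not substitute for it.
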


%%%%%%%%%%%%%%%%%%%%%%%%%%%%%%%%%%%%%%%%%%%%%%%%%%%%%%%%%%%%%%
% !TEX root =TVM_0516.tex
%%%%%%%%%%%%%%%%%%%%%%%%%%%%%%%%%%%%%%%%%%%%%%%%%%%%%%%%%%%%%%%%%%%%%%%%%%%%%%%%%%%
\section{Preliminaries and Graphical Representations}
\label{section_2}

\subsection{Preliminaries}

To precisely define the particle system, let $\BT_d$ be the homogenous tree where each vertex has $d+1$ neighbors. One can order $\BT_d$ such that for each vertex $x$, it has exactly one {\bf parent}, denoted as $\overleftarrow{x}$ and $d$ {\bf children}. At the same time, one can also label all the $d$ children of each vertex by 1 to $d$ according to this order. If a vertex $x$ can be traced back to another vertex $y$ through a path going up in the ``family tree", we call $x$ the {\bf ancestor} and $y$ the {\bf descendant}. For vertices $x,y\in \BT_d$, we write $x\sim y$ if $y$ is a neighbor of $x$, and we use $|x-y|$ to denote the distance between $x$ and $y$ on $\BT_d$, especially, when $x=o$, we denote the distance between $y$ and $o$ as $|y|$. 

According to Part 1, Section 4 of \cite{liggett1999stochastic}, one may embed $\BZ$ into $\BT_d$ by assigning consecutive integers to a doubly infinite connected lineage within $\BT_d$ such that for vertex $x$ and $y$ which are parent and child, we always have $l(y)=l(x)+1$, where $l(x)$ is the integer number assigned to vertex $x$. Thus, one can also see $l(x)$ as the generation number of vertex $x$ along this lineage, and denote by $o$ the vertex with $l(x)=0$. And for each $k\in \BZ$, denote by $x_k$ the vertex on this lineage with $l(x_k)=k$. With generation numbers defined along one specific lineage, one can now discriminate all other vertices with the following finite sequence of integers: for each point $z\in \BT_d$, define $n_z\in \BZ^+\cup \{0\}$ such that $x_{-n_z}$ is the most recent common ancestor of $o$ and $z$. In particular, $z$ is an descendent of $o$ if and only if $n_z=0$. If $z\not= x_{-n_z}$, note that there is a unique simple path $\vec y$ from $x_{-n_z}$ to $z$ denoted by $x_{-n_z}=y(0), y(1),\cdots, y(m_z)=z$. Now define $\big(r(1),r(2),\cdots, r(m_z)\big)\in \{1,2,\cdots, d\}^{m_z}$ such that for each $i$, $y(i)$ is the $r(i)-$th descendent of $y(i-1)$. From the definitions above, one can see that each vertex $z\in \BT_d$ now has a unique correspondence with $\big(n_z, r(1),r(2),\cdots, r(m_z)\big)$, while $|z|=n_z+m_z$. One may note that the discussion above also gives an explicit construction of the infinite homogeneous tree $\BT_d$.  

For any $A\subseteq \BT_d$, denote by $\xi_A$ the configuration in $\{0,1\}^{T_d}$ such that $\xi_A(x)=1$ if and only if $x\in A$. 
\subsection{Coalescing duality}
The graphical representation for the threshold-one contact process can be referred to Section 6 of Chapter I in \cite{liggett1985}. Since the threshold-one contact process is additive, it has coalescing duality. Now we construct the threshold-one contact process and its dual process as follows.
 
 For any $x\in \BT_d$, let $\{T_n^{x,\CN_x}\}$ and $\{T_n^{x,\emptyset}\}$ be two independent Poisson flows with rate $\lambda$ and 1 respectively. Let $\CN_x=\{y:|y- x|\le 1\}$ be the neighborhood of $x$. At each event time $s$ of $T_n^{x,\CN_x}$ we draw an arrow from $y$ to $x$ for every $y\in \CN_x\backslash\{x\}$, which can be thought that the particle at $y$ gives a birth at $x$ at time $s$. And at each event time s of $T_n^{x,\emptyset}$ put a $\delta$ at $(x,s)$ where $\delta$ represents deaths. We say that there is an open path from $(x,0)$ to $(y,t)$ if there is a pair of sequences $x_0=x,\dots,x_n=y$ and $s_0=0<s_1<\dots s_n<s_{n+1}=t$ such that:
  \begin{itemize} 
 \item For $1\leq m\leq n$ there is an arrow from $x_{m-1}$ to $x_{m}$ at time $s_m$.
 \item For $0\leq m\leq n$ there is no $\delta's$ in $\{x_m\}\times(s_{m},s_{m+1})$.
 \end{itemize}
 
To get the threshold-one contact process, for any $A\subset \BT_d$, let
$$
\xi_t^{\lambda,A,\BT_d}(x)=\ind_{\{\text{ there is an open path from $(z,0) $ to $(x,t)$ for some $z\in A$}\}}.
$$
 To get the dual process up to time $t$, reverse the directions of the arrows and time in the graph representation before time $t$, for any $s\leq t$, for any $B\subset \BT_d$, let
$$
\tilde{\xi}^{\lambda,B,\BT_d}_{t,s}(x)=\ind_{\{\text{ there is an open path from $(z,t) $ to $(x,t-s)$ for some $z\in B$ }\}}
$$
Since for any $s\leq t_1\leq t_2$, $\tilde{\xi}^{\lambda,B,\BT_d}_{t_1,s}$ and $\tilde{\xi}^{\lambda,B,\BT_d}_{t_2,s}$ have the same distribution, we can define the dual process $\tilde{\xi}^{\lambda,B,\BT_d}_{t}\triangleq\tilde{\xi}^{\lambda,B,\BT_d}_{t,0}$. Note that $\xi_t^{\lambda,A,\BT_d}$ has the infinitesimal generator $\tilde\Omega^\lambda$ as follows: for any local function $f$,
\begin{equation}
\label{generator_dual} 
\Omega^\lambda f(\xi)=\sum_{x\in \BT_d} \Big(\ind_{\{\xi(x)=1\}}[f(\xi^x)-f(\xi)]+ \lambda\ind_{\{\xi(x)=0,\sum\limits_{y\sim x}\xi(y)\ge 1\}} [f(\xi^x)-f(\xi)]\Big) 
\end{equation}
where $\xi^x$ has the same state as $\xi$ at any vertex except $x$.
And $\tilde{\xi}^{\lambda,B,\BT_d}_{t}(x)$ has the infinitesimal generator $\tilde\Omega^\lambda$ as follows: for any local function $f$,
\begin{equation}
\label{generator_dual} 
\tilde\Omega^\lambda f(\xi)=\sum_{x\in \BT_d} \ind_{\{\xi(x)=1\}}\left([f(\xi^ x)-f(\xi)]+ \lambda [f(\xi\cup \xi_{\CN_x})-f(\xi)]\right). 
\end{equation}
where $\xi\cup\eta=\xi\vee\eta$.

Let $P^{\xi_A}(\cdot)$ and $\tilde P^{\xi_B}(\cdot)$ denote the distribution of ${\xi}^{\lambda,A,\BT_d}$ and $\tilde{\xi}^{\lambda,B,\BT_d}$ respectively, $E^{\xi_A}(\cdot)$ and $\tilde E^{\xi_B}(\cdot)$ denote the corresponding expectations. When $A=\{x\}$ is a singleton, we abbreviate $P^{\xi_{\{x\}}}$, $E^{\xi_{\{x\}}}$ and ${\xi}^{\lambda,A,\BT_d}$ to $P^x$, $E^x$ and ${\xi}^{\lambda,x,\BT_d}$ for convenience, and we sometimes omit $A$ in ${\xi}^{\lambda,A,\BT_d}$. From the construction, we can get the following important dual relation
 \bea\label{20}
 P^{\xi_A} ( \xi^{\lambda,\BT_d}_t\cap\xi_B\ne \textbf{0})=\tilde P^{\xi_B} ( \tilde{\xi}^{\lambda,\BT_d}_t\cap \xi_A\ne\textbf{0} ) \quad\forall A, B \subset \BT_d
 \eea
where $\xi\cap\eta=\xi\land\eta$.

The graph representation is a useful tool to get the monotonicity of $\xi_t^{\lambda,\BT_d}$ in $\lambda$ and the initial distribution such that
 \begin{itemize} 
  \item For any $\lambda_{(1)}\leq\lambda_{(2)}$, regarding the Poisson arrows with parameter $\lambda_{(2)}$ as superpositions of independent Poisson arrows with parameter $\lambda_{(1)}$ and $\lambda_{(2)}-\lambda_{(1)}$, we can couple $\xi_t^{\lambda_{(1)},\BT_d}$ and $\xi_t^{\lambda_{(2)},\BT_d}$ on the common graph representation, so that 
 \bea \label{23}
 \xi_t^{\lambda_{(1)},\BT_d}\subseteq \xi_t^{\lambda_{(2)},\BT_d}
  \eea
  for any $t$ when both starting from $A$. 
\item For any finite subsets $A\subseteq B$, we can couple ${\xi}^{\lambda,A,\BT_d}$ and ${\xi}^{\lambda,B,\BT_d}$ with the same $\lambda$ such that 
\bea\label{25}
\xi_t^{\lambda,A,\BT_d}\subseteq \xi_t^{\lambda,B,\BT_d}
\eea
for any $t$ by using the same Poisson flows on a common graph representation.   
\end{itemize}
The same results also hold for the dual process. Moreover, $\xi^{\lambda,\BT_d}_t$ and $\tilde \xi^{\lambda,\BT_d}_t$ are additive in the sense that 
$$
\xi^{\lambda,A,\BT_d}_t=\cup_{x\in A}\xi^{\lambda,x,\BT_d}_t, \ \tilde \xi^{\lambda,A,\BT_d}_t=\cup_{x\in A}\tilde \xi^{\lambda,x,\BT_d}_t.
$$
Let $A=\BT_d$ in \eqref{20}, we obtain
\begin{equation}
\label{p} 
\tilde P^{\xi_B}(\tilde{\xi}^{\lambda,\BT_d}_t\ne\textbf{0})=\delta_\mathbf{1}S^\lambda(t)(\xi:\xi\cap \xi_B\ne\textbf{0}).
\end{equation}
Thus the limit of $\delta_\mathbf{1}S^\lambda(t)$ exists and denote it as $\bar{\nu}^\lambda$ such that 
\bea\label{21}
\tilde P^{\xi_B}(\tilde{\xi}^{\lambda,\BT_d}_t\ne\textbf{0}\text{ for all } t)=\bar{\nu}^\lambda(\xi:\xi\cap \xi_B\ne\textbf{0})
\eea
for any $B$.

\subsection{Annihilating duality}
Since the threshold-one voter model w/o death is not additive, it has no coalescing duality. However, it has another important duality, the annihilating duality (see Chapter 3.4 of \cite{liggett1985}), which was introduced in \cite{cox1991nonlinear}. 

 For any $x\in \BT_d$, let $\{T_n^{x,S_x}\}_{S_x\subseteq \CN_x}$ and $\{T_n^{x,\emptyset}\}$ be independent Poisson processes with rate $\beta(S_x)$ and $\varepsilon$ respectively, where \begin{center}

  $\beta(S_x)= \left \{
    \begin{array}{ll}
   \frac{1}{2^{d}}  \quad &S_x\subset\CN_x \text{ and $|S_x|\ge 2$ is even}，\\
   0\quad &otherwise   \end{array}\right.$.\\
\end{center}

 At any event time $t\in\{T_n^{x,S_x}\}$ draw an arrow from $y$ to $x$ for every $y\in S_x$ and if $x\in S_x$ we put a $\delta$ at $(x,t)$, while if $t\in\{T_n^{x,\emptyset}\}$, put a $\delta$ at $(x,t)$.
  
  Let\begin{center}$\eta_t^{\varepsilon,A,\BT_d}(x)=1_{\{\text{ there are odd number of paths from $(z,0)$ to $(x,t)$ for some $z\in A$}\}}$.\end{center} 
Reverse the direction of the arrows, let
\bea\label{2}
\tilde{\eta}_{t,s}^{\varepsilon,B,\BT_d}(y)=1_{\{\text{ there are odd number of paths from $(z,t)$ to $(y,s)$ for some $z\in B$}\}}.
\eea

Then the dual process starting from $\eta_B$ can be defined as $\tilde{\eta}_{t,0}^{\varepsilon,B,\BT_d}$. From the above argument we can define the infinitesimal generator of $\eta_t^{\varepsilon,A,\BT_d}$ 
as: for any local function $f$,
\begin{equation}
\label{generator_dual} 
\Omega^\varepsilon f(\eta)=\sum_{x\in \BT_d}\Big(\sum_{S_x} \beta(S_x)[f(\eta\triangle \eta_{S_x})-f(\eta)]+\varepsilon\ind_{\{\eta(x)=1\}}[f(\eta^x)-f(\eta)]\Big)
\end{equation} where $\eta\triangle \eta_{S_x}(y)=1$ iff $\eta(y)[1-\eta_{S_x}(y)]=1$ or $[1-\eta(y)]\eta_{S_x}(y)=1$. $\eta$ is odd means that there are odd number of 1's in $\eta$.

And the infinitesimal generator of $\tilde\eta_t^{\varepsilon,A,\BT_d}$ 
can be defined as: for any local function $f$,
\begin{equation}
\label{generator_dual} 
\tilde\Omega^\varepsilon f(\eta)=\sum_{x\in \BT_d}\Big(\sum_{S_x} \beta(S_x)[f(\eta\triangle \eta_{S_x})-f(\eta)]+\varepsilon\ind_{\{\eta(x)=1\}}[f(\eta^x)-f(\eta)]\Big).\end{equation}
 The threshold-one voter model is the special case when $\varepsilon=0$.

From the construction above, ${\eta}^{\varepsilon,\BT_d}_t$ and $\tilde{\eta}^{\varepsilon,\BT_d}_t$ can be seen as annihilating branching processes in the sense that:
when two particles meet, they annihilate.

Thus we get the duality relation:
\bea\label{2_cd}P^{\eta_A}( \eta_t^{\varepsilon,\BT_d}\cap \eta_B\text{ is odd} )=\tilde P^{\eta_B}( \tilde{\eta}_t^{\varepsilon,\BT_d}\cap \eta_A\text{ is odd} ) \quad\forall A, B.\eea

% !TEX root = TVM_0516.tex
%%%%%%%%%%%%%%%%%%%%%%%%%%%%%%%%%%%%%%%%%%%%%%%%%%%%%%%%%%%%%%%%%%%%%%%%%%%%%%%%%%%
\section{The threshold-one contact process}
\label{section_3}

The proofs in this section are mostly adaptions of the corresponding results and methods developed for the ordinary contact process on $\BT_d$, especially those in \cite{liggett1999stochastic} and \cite{liggett1996multiple}.

\subsection{Equivalence of survivals}
In this subsection, we extend the arguments in Section 4 of Part I in \cite{liggett1999stochastic} to $\xi^{\lambda,\BT_d}_t$ and $\tilde \xi^{\lambda,\BT_d}_t$ with $\rho=1$ and prove Theorem \ref{theorem duality}. The proofs of Lemma \ref{le31} and \ref{le33} are parallel with no significant difference with those in \cite{liggett1999stochastic} since all the three processes $\xi^{\lambda,\BT_d}_t$, $\tilde \xi^{\lambda,\BT_d}_t$ and the ordinary contact process are attractive and additive spin systems, so we omit the proofs here.

\begin{lemma}[Adaption of Proposition 4.27 (b), \cite{liggett1999stochastic}]%lemma1
\label{le31}
\bea\label{phi}
\Phi(\lambda)=\lim\limits_{t\to\infty}E^o|\xi^{\lambda,\BT_d}_t|^{1/t}
\eea exists and there exists
$ C<\infty$ such that 
\bea\label{33}
[\Phi(\lambda)]^t\leq E^o|\xi^{\lambda,\BT_d}_t|\leq C[\Phi(\lambda)]^t.
\eea
\end{lemma}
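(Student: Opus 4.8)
The plan is to set $a(t)=E^o|\xi^{\lambda,\BT_d}_t|$, prove that $\log a$ is subadditive (which gives both the existence of $\Phi(\lambda)$ and the lower bound in \eqref{33}), and then to upgrade subadditivity to an \emph{approximate supermultiplicativity}, which is what forces the matching upper bound.

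First I would record that $a(t)\in(0,\infty)$: the origin stays occupied with probability at least $e^{-t}$, so $a(t)>0$, while dominating $|\xi^{\lambda,\BT_d}_t|$ by a pure-birth branching process in which each particle spawns a neighbour at total rate $\lambda(d+1)$ (dropping the threshold constraint and the deaths, both of which only decrease the population) gives $a(t)\le e^{\lambda(d+1)t}<\infty$. By vertex-transitivity of $\BT_d$ and translation invariance of the dynamics, $E^x|\xi^{\lambda,\BT_d}_t|=a(t)$ for every $x$. Using additivity, $\xi^{\lambda,o,\BT_d}_{t+s}=\bigcup_{y\in\xi^{\lambda,o,\BT_d}_s}\xi^{\lambda,y,\BT_d}_t$, where the time-$t$ families are built from the graphical representation after time $s$ and hence are conditionally independent of $\mathcal{F}_s$ given $\xi^{\lambda,o,\BT_d}_s$. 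Thus $|\xi^{\lambda,\BT_d}_{t+s}|\le\sum_{y\in\xi^{\lambda,o,\BT_d}_s}|\xi^{\lambda,y,\BT_d}_t|$, and taking $E^o$ and conditioning at time $s$ yields
\[
a(t+s)\le E^o\Big[\sum_{y\in\xi^{\lambda,o,\BT_d}_s}E^y|\xi^{\lambda,\BT_d}_t|\Big]=E^o|\xi^{\lambda,\BT_d}_s|\,a(t)=a(s)\,a(t).
\]
Hence $g(t):=\log a(t)$ is finite, measurable, locally bounded and subadditive, so Fekete's lemma gives that $\Phi(\lambda)=\lim_{t\to\infty}a(t)^{1/t}=\inf_{t>0}a(t)^{1/t}$ exists; in particular $[\Phi(\lambda)]^t\le a(t)$ for every $t$, which is the lower bound in \eqref{33}.

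The upper bound is the genuine obstacle, because subadditivity by itself is far too weak for it: for instance $g(t)=t\log\Phi(\lambda)+\sqrt t$ is subadditive yet has $a(t)/[\Phi(\lambda)]^t\to\infty$. I would therefore reduce the upper bound to an approximate reverse inequality: it suffices to produce a constant $C_0<\infty$ with
\[
a(t+s)\ge e^{-C_0}\,a(s)\,a(t)\qquad\text{for all }s,t.
\]
Indeed, this makes $h(t):=g(t)-C_0$ superadditive, so $h(t)/t\to\sup_{t}h(t)/t$; since $h(t)/t\to\log\Phi(\lambda)$ this forces $h(t)\le t\log\Phi(\lambda)$, i.e. $a(t)\le e^{C_0}[\Phi(\lambda)]^t$, which is \eqref{33} with $C=e^{C_0}$.

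It remains to establish the reverse inequality, and this is where I expect the real work to lie. The difficulty is that the additive union defining $\xi^{\lambda,\BT_d}_{t+s}$ can have large overlaps, so the naive bound only runs the convenient way; what must be controlled is the overcount $\sum_{y\in\xi_s^{\lambda,o,\BT_d}}|\xi^{\lambda,y,\BT_d}_t|-|\xi^{\lambda,\BT_d}_{t+s}|$. I would handle this by a restart argument in the spirit of Proposition 4.27(b) of \cite{liggett1999stochastic}: run the process for a fixed time so that, with a probability bounded below uniformly, a distinguished occupied site is produced from which the future is a conditionally fresh copy of the process, and then glue these fresh copies using additivity and the translation invariance of $\BT_d$ to obtain a constant $C_0$ independent of $s$ and $t$. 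Since the three processes here are attractive additive spin systems on a vertex-transitive graph exactly as in \cite{liggett1999stochastic}, this step transfers with no essential change; it is nonetheless the one place where additivity, attractiveness and the homogeneity of the tree must all be used simultaneously, and hence the crux of the argument.
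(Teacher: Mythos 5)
Your submultiplicative half is correct and is exactly how the cited argument begins: additivity of the graphical representation gives $a(s+t)\le a(s)a(t)$, Fekete's lemma gives the existence of $\Phi(\lambda)=\inf_{t>0}a(t)^{1/t}$ together with the lower bound in \eqref{33}, and your reduction of the upper bound to an approximate reverse inequality $a(s)a(t)\le e^{C_0}a(s+t)$ is the right reduction (your counterexample $t\log\Phi(\lambda)+\sqrt t$ correctly shows subadditivity alone is not enough). Note that the paper gives no proof of this lemma at all: it asserts that the proof of Proposition 4.27 of \cite{liggett1999stochastic} carries over because the process is an attractive additive spin system. So up to this point you have written out more than the authors do, and all of it is sound.

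The gap is in the one step that carries all the content. Your sketch for the reverse inequality --- run for a fixed time to produce, with probability bounded below, a distinguished occupied site from which the future is a conditionally fresh copy --- proves only an estimate of the form $a(s+t)\ge c\,a(t)\,\sup_y P^o\big(y\in\xi^{\lambda,\BT_d}_s\big)$: restarting from a single site $y$ and using translation invariance recovers one factor of $a(t)$ but only the probability of that one site being occupied, not the sum. Since $\sup_y P^o(y\in\xi^{\lambda,\BT_d}_s)\le 1$ while $a(s)=\sum_y P^o(y\in\xi^{\lambda,\BT_d}_s)$ grows exponentially whenever $\Phi(\lambda)>1$, this loses the entire factor $a(s)$ and cannot produce a constant $C_0$ independent of $s$. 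To recover the full sum one must restart simultaneously from every $y\in\xi^{\lambda,\BT_d}_s$ and then defeat exactly the overcounting problem you identify in the preceding sentence: the progenies of distinct $y$'s overlap, and the naive pairing of $(y,z)$ with a target vertex of $\xi^{\lambda,\BT_d}_{s+t}$ is many-to-one. This is precisely what the proof of Proposition 4.27(b) in \cite{liggett1999stochastic} accomplishes, by exploiting the geometry of $\BT_d$ (the level function and the level-preserving automorphisms that translate along geodesics) to control the multiplicity of that pairing; it is the only place where the tree structure, rather than mere additivity and attractiveness, actually enters. As written, your proposal names the obstacle correctly and then offers a device that does not overcome it, so the upper bound in \eqref{33} remains unproved; the proposal is acceptable only to the same extent that the paper's own citation-in-lieu-of-proof is, and the accompanying sketch should either be replaced by the genuine argument or deleted, since it misrepresents what the cited proof does.
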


\begin{lemma}[Adaption of Proposition 4.39, \cite{liggett1999stochastic}]
\label{le33}%lemma3
\bea&\lambda_e(d)=\sup\{\lambda:\Phi(\lambda)\leq1\},
\\&\tilde{\lambda}_e(d)=\sup\{\lambda:\tilde{\Phi}(\lambda)\leq1\}\label{6}.\eea
Moreover, $\xi^{\lambda,\BT_d}_t$ and $\tilde\xi^{\lambda,\BT_d}_t$ die out at the first critical value.
\end{lemma}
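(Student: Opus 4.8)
The plan is to prove the single statement that $\xi^{\lambda,\BT_d}_t$ survives from finite population if and only if $\Phi(\lambda)>1$; granting this, the two displayed characterizations and the dying-out at criticality follow from soft regularity of $\Phi$. First I would record that $\Phi$ is nondecreasing and continuous (indeed strictly increasing on the range where it is finite and positive): monotonicity is immediate from the coupling \eqref{23}, which makes $E^o|\xi^{\lambda,\BT_d}_t|$ nondecreasing in $\lambda$ for every $t$, while continuity and strict monotonicity follow by comparing the graphical representations at $\lambda$ and $\lambda'$ and invoking the two-sided bound \eqref{33}. Consequently $\{\lambda:\Phi(\lambda)\le 1\}=[0,\lambda^\ast]$ is closed with $\Phi(\lambda^\ast)=1$, and it suffices to establish: (i) $\Phi(\lambda)<1\Rightarrow$ extinction; (ii) $\Phi(\lambda)>1\Rightarrow$ survival; and (iii) extinction exactly at $\lambda^\ast$.

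For (i), note that $\mathbf 0$ is absorbing, so $\{\xi^{\lambda,\BT_d}_t\ne\mathbf 0\}$ is a decreasing family of events and the survival probability equals $\beta_\infty=\lim_{t}P^o(\xi^{\lambda,\BT_d}_t\ne\mathbf 0)$. By Markov's inequality and Lemma \ref{le31}, $P^o(\xi^{\lambda,\BT_d}_t\ne\mathbf 0)=P^o(|\xi^{\lambda,\BT_d}_t|\ge 1)\le E^o|\xi^{\lambda,\BT_d}_t|\le C\Phi(\lambda)^t$, which tends to $0$ when $\Phi(\lambda)<1$; hence $\beta_\infty=0$ and the process dies out. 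This already yields $\lambda_e(d)\ge\lambda^\ast$.

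Parts (ii) and (iii) are the crux. The obstacle is that exponential growth of the first moment $E^o|\xi^{\lambda,\BT_d}_t|\ge\Phi(\lambda)^t$ does not by itself force survival, since the mean could be sustained by rare large configurations on an event of vanishing probability. To upgrade mean growth to genuine survival I would run a restart/renormalization argument in the spirit of the supercritical-branching comparisons used for the contact process: using \eqref{33} to select a time $T$ at which $E^o|\xi^{\lambda,\BT_d}_T|$ is large, together with a spatial scale adapted to $\BT_d$, one shows that a suitably \emph{occupied} space--time block produces, with probability close to one, at least two occupied child blocks supported on disjoint subtrees; by additivity and attractiveness these block events can be coupled to dominate a supercritical Galton--Watson process, which survives with positive probability, and therefore so does $\xi^{\lambda,\BT_d}_t$. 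The same construction settles (iii): the embedded branching process is supercritical with a strictly positive margin, and the block-event probabilities depend continuously on $\lambda$, so survival at a parameter forces survival at all slightly smaller parameters; since $\Phi(\lambda^\ast-\epsilon)<\Phi(\lambda^\ast)=1$ for small $\epsilon>0$ by strict monotonicity, survival at $\lambda^\ast$ would contradict (i), and hence the process dies out at $\lambda^\ast$.

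Combining (i)--(iii) gives $\{\lambda:\text{survives}\}=(\lambda^\ast,\infty)$ and $\lambda_e(d)=\lambda^\ast=\sup\{\lambda:\Phi(\lambda)\le 1\}$ with extinction at $\lambda_e(d)$. Because the excerpt already observes that $\tilde\xi^{\lambda,\BT_d}_t$ is likewise an attractive additive spin system possessing its own growth rate $\tilde\Phi$ (with the dual analogue of Lemma \ref{le31} supplying existence and the two-sided bound), the identical argument applied to $\tilde\xi^{\lambda,\BT_d}_t$ yields $\tilde\lambda_e(d)=\sup\{\lambda:\tilde\Phi(\lambda)\le 1\}$ and extinction of $\tilde\xi^{\lambda,\BT_d}_t$ at its critical value. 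I expect the renormalization step above---making the block construction on $\BT_d$ precise and verifying the continuity in $\lambda$ needed to close the critical case---to be the main difficulty; everything else is soft.
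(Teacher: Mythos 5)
The paper does not actually write out a proof of this lemma: it defers entirely to Proposition 4.39 of Liggett (1999), asserting that the argument transfers verbatim because $\xi^{\lambda,\BT_d}_t$ and $\tilde\xi^{\lambda,\BT_d}_t$ are attractive and additive, and because Lemma \ref{le31} supplies the two-sided bound $[\Phi(\lambda)]^t\le E^o|\xi^{\lambda,\BT_d}_t|\le C[\Phi(\lambda)]^t$. Measured against that intended argument, your proposal has two genuine gaps. First, in the subcritical direction you only treat $\Phi(\lambda)<1$ (Markov's inequality), and you then route everything through asserted but unproven regularity of $\Phi$ --- continuity, strict monotonicity, and the claim that $\{\lambda:\Phi(\lambda)\le1\}=[0,\lambda^\ast]$ with $\Phi(\lambda^\ast)=1$. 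None of these is automatic: $\Phi=\inf_t u_\lambda(t)^{1/t}$ is a priori only upper semicontinuous, and your deduction ``$\Phi<1\Rightarrow$ extinction, hence $\lambda_e\ge\lambda^\ast$'' silently assumes $\Phi$ never sits at the value $1$ on an interval. The whole point of the upper bound in \eqref{33} is to avoid this: if $\Phi(\lambda)\le1$ then $E^o|\xi^{\lambda,\BT_d}_t|\le C$ for all $t$, while on the survival event $|\xi^{\lambda,\BT_d}_t|\to\infty$ a.s.\ (each visit to a level $\{|A|=k\}$ carries a uniformly positive chance of total extinction within unit time), so survival with positive probability forces $E^o|\xi^{\lambda,\BT_d}_t|\to\infty$. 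This handles $\Phi=1$ --- and hence extinction at the critical value --- with no regularity input on $\Phi$ at all.

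Second, and more seriously, the supercritical direction $\Phi(\lambda)>1\Rightarrow$ survival is the substance of the lemma, and you only gesture at a block renormalization. As described, the construction does not close: you propose that an occupied space--time block produces two occupied child blocks ``with probability close to one,'' but the only input available is exponential growth of the \emph{mean}, which is exactly compatible with the mass being carried by rare large configurations while $P^o(\xi^{\lambda,\BT_d}_t\ne\mathbf 0)\to0$ --- the obstacle you yourself name and then do not overcome. Converting mean growth into a uniformly positive survival probability requires an additional quantitative ingredient (in Liggett's treatment this is where the two-sided bound of Lemma \ref{le31} is exploited to control restarts/second moments), and without it both (ii) and your continuity-based argument for (iii) are unsupported. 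So the proposal is structurally reasonable but leaves the hardest step, and the step the two-sided bound \eqref{33} was designed for, unproved.
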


{\bf Proof of Theorem \ref{theorem duality}}: By duality, it suffices to prove $\lambda_e(d)=\tilde{\lambda}_e(d)$. When starting from a single vertex $\{o\}$, it is easy to calculate that 
\bea\label{xx}
E^o|\xi^{\lambda,\BT_d}_t|=\sum\limits_{n=0}^{\infty}(d+1)d^nP^o(x_n\in \xi^{\lambda,\BT_d}_t)=\sum\limits_{n=0}^{\infty}(d+1)d^nP^o(x_n\in \tilde{\xi}^{\lambda,\BT_d}_t)=E^o|\tilde{\xi}^{\lambda,\BT_d}_t|.
\eea
By the definition of $\Phi$ in \eqref{phi}, $\Phi(\lambda)=\tilde{\Phi}(\lambda)$. Thus it follows from \eqref{6} that
\bea\label{366}
\lambda_e(d)=\tilde{\lambda}_e(d).
\eea
Moreover, by Lemma \ref{le33}, both $\xi^{\lambda,\BT_d}_t$ and $\tilde{\xi}^{\lambda,\BT_d}_t$ die out at the first critical value so that $\xi^{\lambda,\BT_d}_t$ neither survives nor survives from finite population at $\lambda_e(d)$ by duality.\qed

\begin{remark}
Define another two critical values
\begin{align*}&\lambda_T=\sup\{\lambda:\left(\int_0^\infty\e^o|\xi^{\lambda,\BT_d}_t|dt\right)<\infty\}\\&
\tilde\lambda_T=\sup\{\lambda:\left(\int_0^\infty\e^o|\tilde\xi^{\lambda,\BT_d}_t|dt\right)<\infty\}
\end{align*}
Then by \eqref{xx}, we get $\lambda_T=\tilde\lambda_T$. Once we have
\bea\label{3_1}
\lambda_e=\lambda_T, \tilde\lambda_e=\tilde\lambda_T,
\eea Theorem \ref{theorem duality} holds immediately.
For any transitive graph with bounded degree, Aizenman and Jung in \cite{aizenman2007critical} showed that for the ordinary contact process,
$$\hat\lambda_e=\hat\lambda_T$$where $\hat\lambda_e,\hat\lambda_T $ represent the corresponding critical values for the ordinary contact process. Applying their proof to the threshold-one contact process and its dual process, we should also be able to obtain \eqref{3_1}. Thus Theorem \ref{theorem duality} holds for any transitive graph with bounded degree, such as $\BZ^d$.
\end{remark}

\subsection{Conditions for survival}\label{sec_32}
In this subsection, we will give conditions for the dual process $\tilde\xi^{\lambda_d,\BT_d}_t$ to survive, as a result, we can get an upper bound for the critical value $\lambda_1(d)$. 

{\bf Proof of Theorem \ref{survive d}:}
\begin{proof} 
%Let $f(A)=|A|$, we have 
%\begin{align*}
%h(A)&=\lambda|\partial{A}|-|A|
%\\&\geq \lambda [(d-1)|A|+2]-|A|
%\\&\geq 2\lambda
%\end{align*}
%whenever $\lambda\geq\frac{1}{d-1}$, so that $E^{\xi_A}|\xi_t|\to\infty$. By Lemma \ref{le31}, $\Phi(\lambda)>1$ thus from Lemma \ref{le33} we can get $\xi_t$ survives.
For any non-empty $A\in Y$, define $g(A)=|A|$. Then for $\lambda_d=(d-1)^{-1}$ and our process $\xi^{\lambda_d,\BT_d}_t$ on $\BT_d$, we can similarly define
\bea
\label{infinitesimal cardinality}
\begin{aligned}
h^{\lambda_d}(A)&=\frac{d}{dt}E^{\xi_A} \left[g\left(\xi^{\lambda_d,\BT_d}_t\right)\right]\Big|_{t=0}\\
&=\lambda_d |\partial A|-|A|\\
&\ge \frac{1}{d-1}[(d-1)|A|+2]-|A|
\\&=\frac{2}{d-1}. 
\end{aligned}
\eea
The third inequality comes from the fact that $|\partial A|\ge(d-1)|A|+2$, which can be proved by induction. First when $|A|=1$, $|\partial A|=d+1=(d-1)|A|+2$. Then we assume that the fact is true for all $|A|\le n-1$. We come to the case when $|A|=n$. Assume $x\in A$ has the maximum generation,  i.e.  
$$
m_{x}-n_{x}=\max\{m_y-n_y: \ y\in A\}. 
$$
If there is more than one such point, we choose the ``smallest" one according to the lexicographical order of $(n_z, r(1),r(2),\cdots, r(m_z))$. It follows that 
$$
  |\partial A|\ge  |\partial (A\cup\{x\})|-1+d
  $$
  where the inequality holds iff $\overleftarrow x \in A$. Hence by the inductive hypothesis we have
  \begin{align*}
   |\partial A|&\ge  |\partial (A\cup\{x\})|-1+d
   \\&\ge(d-1)(n-1)+2+d-1
   \\&=n(d-1)+2
   \\&=(d-1)|A|+2.
   \end{align*}
  Now the fact is true for all $A$ satisfying $|A|=n$ and thus for all finite subset $A$.
From the transition rates above, one can see that the stochastic process $|\xi^{\lambda_d,\BT_d}_t|$ dominates a Markov process $\hat A^d_t$ on $\BZ^+\cup\{0\}$ with 0 as absorbing state and transition rates as follows: for each $n\in \BZ^+$, 
$$
\left\{
\begin{aligned}
&n\rightarrow n+1 \text{ at rate }\hspace{0.1 in}n+\frac{2}{d-1}\\
&n\rightarrow n-1 \text{ at rate }\hspace{0.1 in}n\\
\end{aligned}
\right.
$$

Note that in Lemma \ref{le33} we have proved that survival of $A^{\lambda_{d},\BT_d}_t$ is equivalent to $\Phi(\lambda)>1$. Define $P^n$ as the expectation of random walk starting from $n$. By Lemma \ref{le31} and \ref{le33} it suffices to prove that $\lim\limits_{t\to\infty}E^o\left|A^{\lambda_d,\BT_d}_t\right|= \infty$, which, by the discussion above, is immediate if one can show that 
\bea
\label{expectation_limit}
\lim_{t\to\infty}E^1\left|\hat A^{d}_t\right|= \infty. 
\eea
Again, note that for $\hat A^d_t$, its infinitesimal mean is given by 
$$
\mu^d(n)=\frac{d}{dt} E_n \hat A^d_t\Big|_{t=0}=\frac{2}{d-1}\ind_{\{n\not=0\}}. 
$$
Recalling that 0 is absorbing for $\hat A^d_t$, so for stopping time 
\bea\label{hit}\hat \tau_0=\inf\{t: \hat A^d_t=0\}\eea and any $t\in (0,\infty)$, by Dynkin's formula,
$$
\begin{aligned}
E^1 \hat A^d_t= 1+E^1\left[\int_0^t \frac{2}{d-1}\ind_{\{\hat \tau_0>s\}}ds \right]=1+ \frac{2}{d-1}\int_0^t P^1(\hat \tau_0>s)ds.
\end{aligned}
$$
Note that $\int_0^\infty P^1(\hat \tau_0>s)ds=E^1[\hat \tau_0]$, thus in order to prove \eqref{expectation_limit}, it is sufficient to prove that $E^1[\hat \tau_0]=\infty$. 

Moreover, note that $\hat A^d_t$ further dominates $S_t$ which is a (inhomogeneously) rescaled continuous time simple random walk with 0 as absorbing state and transition rates as follows: for each $n\ge 1$, 
$$
\frac{d}{dt} P^n(S_t=n+1)\Big|_{t=0}=\frac{d}{dt} P^n(S_t=n-1)\Big|_{t=0}=n. 
$$
Let $\tau_0=\inf\{t: S_t=0\}$. Thus we only need to prove that $E^1[\tau_0]=\infty$. Now define the following sequence of stopping time for $S_t$:
$$
\tau_n=\inf\{t\ge 0: \ S_t=n\}, \ \ \forall n\ge 1
$$
with the convention $\inf\emptyset=\infty$, and disjoint events $A_n=\{\tau_n<\infty, \ \tau_{n+1}=\infty\}$. Then 
\bea
\label{0_recurrent_1}
E^1[\tau_0]=\sum_{n=1}^\infty E^1[\tau_0\ind_{A_n}]. 
\eea
Now for each sufficiently large $n$, note that on the event $A_n$, $\tau_n<\tau_0<\tau_{n+1}=\infty$, which implies that 
\bea
\label{0_recurrent_1.5}
E^1[\tau_0\ind_{A_n}]\ge  E^1[\tau_n\ind_{A_n}].
\eea
And by strong Markov property, 
\bea
\label{0_recurrent_1.6}
E^1[\tau_n\ind_{A_n}]=E^1[\tau_n\ind_{\{\tau_n<\infty\}}]P^n(\tau_0<\tau_{n+1})=\frac{1}{n+1} E^1[\tau_n\ind_{\{\tau_n<\infty\}}],
\eea
where the last equality is a result that the embedded chain of $S_t$ is a simple random walk with 0 as absorbing state. 

Now define $Y_n=\# $of transitions within $[0,\tau_n\wedge \tau_0]$, and let $X_n, n\ge 0$ be a discrete simple random walk on $\BZ$ with stopping time 
$$
T_n=\min\{k: \ X_k=n\}, \ \forall n\ge 0. 
$$
Then again by the fact that the embedded chain of $S_t$ is a simple random walk with  0 as absorbing state, we have for all $m\ge 1$, 
\bea
\label{0_recurrent_2}
P^1(\tau_n<\infty, Y_n\ge m)=P^1(T_n<T_0, T_n\ge m).
\eea
Note that $P^1(T_n<T_0)=n^{-1}$, while 
$$
P^1(T_n<m)= P^0(T_{n-1}<m)\le P^0\left(\max_{i\le m}X_i\ge n-1 \right). 
$$
By reflection principle, 
$$
P^0\left(\max_{i\le m} X_i\ge n-1 \right)=2 P^0(X_m\ge n-1). 
$$
Now let $m_n=[n^2/(3\log n)]$, by Chernoff's inequality, 
\bea
\label{0_recurrent_3}
 P^0(X_{m_n}\ge n-1)\le \exp\left(-\frac{(n-1)^2}{m_n}\right)\le \exp(-2\log n)=\frac{1}{n^2}, 
\eea
which implies that 
\bea\label{4}
P^1(T_n<T_0, T_n\ge m_n)\ge P^1(T_n<T_0)-P^1(T_n<m_n)\ge \frac{1}{n}-\frac{2}{n^2}\ge \frac{1}{2n}
\eea
when $n$ is large enough.
At the same time, note that given any nonzero and nearest neighbor trajectory $1=x_{(0)}, x_{(1)}, \cdots, x_{(m)}=n$ with $x_{(0)}, x_{(1)}, \cdots, x_{(m-1)}\in [1,n-1]$ as the embedded chain of $S_t$ within $[0,\tau_n]$, the conditioned expectation of $\tau_n$ is given by 
$$
\frac{1}{x_{(0)}}+\frac{1}{x_{(1)}}+\cdots+\frac{1}{x_{(m-1)}}\ge \frac{m}{n}. 
$$
Moreover, in the event $\{\tau_n<\infty, Y_n\ge m_n\}$, each trajectory of the embedded chain within $[0,\tau_n]$ has to satisfy the condition above with $m=m_n$. Thus 
\bea
\label{0_recurrent_4}
E\left[\tau_n\big| \tau_n<\infty, Y_n\ge m_n\right]\ge \frac{m_n}{n}\ge \frac{n}{4\log n}.
\eea
By \eqref{0_recurrent_1.5}, \eqref{0_recurrent_1.6}, \eqref{0_recurrent_2} and \eqref{0_recurrent_4} we have
\bea
\label{0_recurrent_5}
\begin{aligned}
E^1[\tau_0\ind_{A_n}]\ge \frac{1}{n+1} E^1[\tau_n\ind_{\{\tau_n<\infty\}}]&\ge \frac{1}{n+1} P^1(\tau_n<\infty, Y_n\ge m_n)  E\left[\tau_n\big| \tau_n<\infty, Y_n\ge m_n\right]\\
&\ge  \frac{1}{8(n+1)\log n}
\end{aligned}
\eea
when $n$ is large enough, which is un-summable. Thus $E^1[\tau_0]=\infty$ and the proof of Theorem \ref{survive d} is complete.
\end{proof}

{\bf Idea of the proof of Theorem \ref{survive 2}:}
Recall in Theorem \ref{theorem duality}, we have prove that $\xi^{\lambda,\BT_d}_t$ survives if and only if $\tilde{\xi}^{\lambda,\BT_d}_t$ survives, and in Lemma \ref{le33} we prove that ${\xi}^{\lambda_1(2),\BT_d}_t$ dies out. Thus it suffices to prove that when $d=2$ and $\lambda\geq 0.637$, the dual process $\tilde{\xi}^{\lambda,\BT_d}_t$ survives. The constant $0.637$ is the same as the weaker upper bounds for survival of the ordinary contact process on $\BT_2$ found in \cite{liggett1996multiple}. The key ingredient in this proof is to show that in the finite system, although $\tilde \xi^{\lambda,\BT_d}_t$ and the ordinary contact process have different transition dynamic, their infinitesimal generators will behave exactly the same when applying on the special limiting test function $f$ defined in \cite{liggett1996multiple}. Specifically, 
$$
 h(A)=\frac{d}{dt}E^{\xi_A}f(\tilde{A}^\lambda_t)|_{t=0}
 $$
 is the same as (4.2) in \cite{liggett1996multiple}, the function $h$ corresponding to the ordinary contact process. 
 
  Let $\tau_a=\inf\{t:f(\xi_t^A)\le a\}$ where $a$ is large enough. As a result, we can show that for any finite set $A$, $f(\tilde{\xi}^{\lambda,A,\BT_d}_{t\land\tau_a})^{-1}$ is a nonnegative supermartingale whose transition rate has uniform lower bound when $\lambda\geq 0.637$. Applying the optional stopping theorem we obtain that $P^{\xi_A}(\tau_{a}=\infty)>0$ when $f(A)>a$. Since $f(\textbf{0})=0$, we get $P^{\xi_A}(\tau_{\textbf{0}}=\infty)>0$. So we have proved that $\tilde{\xi}^{\lambda,\BT_d}_t$ survives when $\lambda\geq 0.637$. Since the approach follows the approach used in Theorem 1.2 (a) in \cite{liggett1996multiple}, for completeness,  
we put the detailed proof in Appendix (A).

%%%%%%%%%%%%%%%%%%%%%%%%%%%%%%%%%%%%%%%%%%%%%%%%%%%%%%%%%%%%%%
% !TEX root = TVM_0516.tex
%%%%%%%%%%%%%%%%%%%%%%%%%%%%%%%%%%%%%%%%%%%%%%%%%%%%%%%%%%%%%%%%%%%%%%%%%%%%%%%%%%%
\subsection{Results for strong survival}
\label{section_4}
It is easy to generalize some results for the ordinary contact process on $\BT_d$, which are proved in \cite{liggett1999stochastic}, to all attractive spin systems we considered in this paper(see Remark \ref{rmk62}). For convenience, we just prove for the threshold-one contact process.

Let $\BT^d_x$ be the subtree containing $x$ together with all its descendants and $\hat{\BT}^d_x=(\BT_d\backslash\BT^d_x)\cup\{x\}$. Denote $\xi^{\lambda,\hat{\BT}^d_x}_t$ as the process restricted on $\hat\BT^d_x$ which is always 0 outside. 

Define
$$
\beta(\lambda,d)=\lim\limits_{n\to\infty}[P^o(x_n\in\xi^{\lambda,\BT_d}_t \text{ for some }t)]^{1/n},
$$ then it exists by Theorem B22 in \cite{liggett1999stochastic}.

Among the key statements proven below is
\begin{lemma}[Adaption of Proposition 4.57, \cite{liggett1999stochastic}]\label{le64} If $\beta(\lambda,d)>\frac{1}{\sqrt{d}}$, then\begin{center} $\inf\limits_t P^o(o\in \xi^{\lambda,\BT^d_o}_t)>0$.\end{center}%Lemma \ref{le64}
\end{lemma}

Since the proof follows the approach used in Proposition 4.57 in \cite{liggett1999stochastic}, for completeness, we put it in Appendix (B).

Once we have Lemma \ref{le64}, immediately we have the next  three lemmas.
\begin{lemma}
\label{corollary_equivalent}
\bea
 P^o(o\in \xi^{\lambda,\BT_d}_t\text{ i.o.})>0\text{ iff }\inf_tP^o(o\in \xi^{\lambda,\BT_o^d}_t)>0.\label{628}
 \eea
\end{lemma}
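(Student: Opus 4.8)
The plan is to prove the two implications separately. The implication
$\inf_t P^o(o\in\xi^{\lambda,\BT_o^d}_t)>0\Rightarrow P^o(o\in\xi^{\lambda,\BT_d}_t\text{ i.o.})>0$ is short, and it is the one actually used downstream (together with Lemmas \ref{le63}--\ref{le64}) to deduce strong survival; the converse is the delicate one. For the easy direction, set $c=\inf_t P^o(o\in\xi^{\lambda,\BT_o^d}_t)>0$. Since the restricted process is obtained from the graphical representation of the full process by deleting every arrow that leaves $\BT_o^d$, the two can be coupled on a common graphical representation so that $\xi^{\lambda,\BT_o^d}_t\subseteq\xi^{\lambda,\BT_d}_t$ for all $t$ when both start from $\{o\}$ (the same comparison as in \eqref{25}); hence $P^o(o\in\xi^{\lambda,\BT_d}_t)\ge c$ for every $t\ge0$. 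For $n\in\BN$ put $B_n=\{o\in\xi^{\lambda,\BT_d}_t\text{ for some }t\ge n\}$. The events $B_n$ decrease to $\bigcap_n B_n=\{o\in\xi^{\lambda,\BT_d}_t\text{ i.o.}\}$, and $P^o(B_n)\ge P^o(o\in\xi^{\lambda,\BT_d}_n)\ge c$, so continuity from above yields $P^o(o\in\xi^{\lambda,\BT_d}_t\text{ i.o.})=\lim_n P^o(B_n)\ge c>0$.

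For the converse I would argue the contrapositive: assuming $\inf_t u(t)=0$ with $u(t)=P^o(o\in\xi^{\lambda,\BT_o^d}_t)$, show that a.s. $o$ is infected at only finitely many times in the full process. The first input is that the Markov property and attractiveness give $u(s+t)\ge u(s)u(t)$ for all $s,t\ge0$: on $\{o\in\xi^{\lambda,\BT_o^d}_s\}$ the configuration dominates $\delta_o$, so restarting costs at least $u(t)$. Thus $u(t)^{1/t}$ increases to some $r:=\sup_t u(t)^{1/t}\le1$. The plan is then to run a renewal analysis of the infection at $o$ in $\BT_d$: because $o$ separates $\BT_d$ into its $d+1$ branches, during any interval in which $o$ is healthy the infections in distinct branches evolve independently, and each branch excursion is governed by a restricted process on a subtree isomorphic to $\BT_o^d$. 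Writing the successive reinfection times of $o$ as a possibly terminating renewal sequence driven by these independent branch returns, the intensity of reinfection from a single branch at time $t$ is $\lesssim\lambda\,u(t)$ by the subtree isomorphism and attractiveness, so the expected number of reinfections of $o$ after time $s$ is controlled by $(d+1)\lambda\int_s^\infty u(t)\,dt$ together with the probability of a return at all. One would then show that $\inf_t u(t)=0$ forces this renewal to be terminating, i.e.\ the expected number of reinfections of $o$ is finite, whence by Borel--Cantelli $o$ is reinfected, and hence infected, only finitely often --- the negation of i.o.

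The main obstacle is precisely this last step. On a tree the population can persist globally while visiting $o$ only finitely often (the gap between weak and strong survival), so $\inf_t u(t)=0$ must be leveraged to show not merely that $u(t)\to0$ but that the branch-return mechanism genuinely terminates. Supermultiplicativity of $u$ by itself is \emph{not} enough here: a profile as slow as $u(t)\asymp(1+t)^{-1}$ satisfies $u(s+t)\ge u(s)u(t)$, has $\inf_t u=0$ and $r=1$, yet is not integrable, so the crude bound $(d+1)\lambda\int_s^\infty u$ need not be finite. Ruling out this borderline regime is where the tree geometry must enter through the branching-process comparison and the threshold $1/\sqrt d$ underlying Lemmas \ref{le63}--\ref{le64}: one has to combine the supermultiplicativity of $u$ with the \emph{independence} of the branch excursions (rather than with the integral test alone) to bound the total expected number of returns to $o$, thereby forcing termination of the renewal whenever $\inf_t u(t)=0$.
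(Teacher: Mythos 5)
Your easy direction is correct and agrees with the paper's (which simply calls it ``immediate''; your continuity-from-above argument is the right way to make that precise). The hard direction, however, has a genuine gap that you identify but do not close: your renewal/contrapositive scheme bounds the expected number of reinfections of $o$ by an integral of $u(t)=P^o(o\in\xi^{\lambda,\BT_o^d}_t)$, and, as you yourself observe, $\inf_t u=0$ together with supermultiplicativity is compatible with $u(t)\asymp(1+t)^{-1}$, for which that integral diverges. Saying that ``one has to combine the supermultiplicativity of $u$ with the independence of the branch excursions'' names the difficulty rather than resolving it, so the converse implication is not actually proved. There is also a secondary problem with the scheme itself: the claimed single-branch reinfection intensity $\lesssim\lambda\,u(t)$ conflates the return probability of the restricted process started from a single site at the branch root with that of an excursion started from whatever scattered configuration the branch carries when $o$ recovers; additivity only gives a sum of hitting probabilities over starting sites at varying depths, which is not controlled by $u(t)$.

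The paper's proof of the hard direction is direct rather than contrapositive, and it is short because Lemma \ref{le64} is exactly the missing engine. If $P^o(o\in\xi^{\lambda,\BT_d}_t\text{ i.o.})>0$, then for every $n$ one has $P^o(x_n\in\xi^{\lambda,\BT_d}_t\text{ i.o.})=P^o(o\in\xi^{\lambda,\BT_d}_t\text{ i.o.})$ (each time $o$ is occupied the infection reaches $x_n$ within unit time with probability bounded below, so a conditional Borel--Cantelli/L\'evy $0$--$1$ argument upgrades i.o.\ occupation of $o$ to i.o.\ occupation of $x_n$ on the same event). Hence $P^o(x_n\in\xi^{\lambda,\BT_d}_t\text{ for some }t)$ is bounded below uniformly in $n$, so $\beta(\lambda,d)=\lim_n[P^o(x_n\in\xi^{\lambda,\BT_d}_t\text{ for some }t)]^{1/n}=1>1/\sqrt{d}$, and Lemma \ref{le64} then delivers $\inf_tP^o(o\in\xi^{\lambda,\BT_o^d}_t)>0$. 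You correctly sensed that the $1/\sqrt{d}$ threshold behind Lemmas \ref{le63}--\ref{le64} must enter, but it enters as the ready-made implication $\beta(\lambda,d)>1/\sqrt{d}\Rightarrow\inf_t u>0$, applied after showing that i.o.\ occupation of $o$ forces $\beta(\lambda,d)=1$; no renewal analysis is needed.
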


\begin{proof}
 When $\inf_tP^o(o\in \xi^{\lambda,\BT^d_o}_t)>0$, $P^o(o\in \xi^{\lambda,\BT_o^d}_t\text{ i.o.})>0$ is immediate simply by definition. To prove the other direction, when $P^o(o\in \xi^{\lambda,\BT_d}_t\text{ i.o.})>0$, by a simple argument, we have
$$
P^o(x_n\in \xi^{\lambda,\BT_d}_t \text{  i.o.})=P^o(o\in \xi^{\lambda,\BT_d}_t\text{  i.o.})
$$
for all $n\ge 1$. Recalling the definition of $\beta(\lambda,d)$, 
$$
\beta(\lambda,d)\ge \lim_{n\to\infty}[P^o(x_n\in \xi^{\lambda,\BT_d}_t \text{  i.o.})]^{1/n}=1>\frac{1}{\sqrt{d}}.
$$
Thus from Lemma \ref{le64} we have proved the other direction. 
\end{proof}

\begin{lemma}\label{lem53}
When
$$
P^o(o\in\xi^{\lambda,\BT^d}_t\text{ i.o.})>0, 
$$

$$
\lim\limits_{n\to\infty}\sup_tP^{\xi_{B(n)}}(|\xi_t^{\lambda,\BT_d}\cap B(n)|\le k)= 0$$  for any $k$ where $B(n)$ is the ball of radius $n$ centered at $o$. \end{lemma}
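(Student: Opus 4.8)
The plan is to exploit strong survival by planting, inside $B(n)$, exponentially many disjoint subtrees, each of which independently keeps its root occupied with a probability bounded below \emph{uniformly in $t$}; a binomial tail bound then finishes the argument.

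First I would extract the uniform lower bound that strong survival provides. Since $P^o(o\in\xi^{\lambda,\BT_d}_t\text{ i.o.})>0$, Lemma \ref{corollary_equivalent} gives
\[
c:=\inf_t P^o\!\left(o\in\xi^{\lambda,\BT^d_o}_t\right)>0 .
\]
By translation invariance, for every $x\in\BT_d$ the rooted subtree $\BT^d_x$ is isomorphic to $\BT^d_o$, so $P^x(x\in\xi^{\lambda,\BT^d_x}_t)=P^o(o\in\xi^{\lambda,\BT^d_o}_t)\ge c$ for all $t$.

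Next, fix $n$ and let $\Lambda_n$ be the set of descendants of $o$ at distance exactly $n$; then $\Lambda_n\subseteq B(n)$, $|\Lambda_n|=d^{\,n}$, and the subtrees $\{\BT^d_x\}_{x\in\Lambda_n}$ are pairwise disjoint. On a single graphical representation I would run the full process from $\xi_{B(n)}$ alongside, for each $x\in\Lambda_n$, the restricted process $\xi^{\lambda,\{x\},\BT^d_x}_t$ started from $x$ and forced to be $0$ outside $\BT^d_x$. Because $\{x\}\subseteq B(n)$ and the restricted process only uses the arrows and death marks inside $\BT^d_x$—a subset of those driving the full process—the monotone graphical-representation coupling (cf.\ \eqref{25} and the subgraph restriction used in Lemma \ref{le63}) gives $\xi^{\lambda,\{x\},\BT^d_x}_t(y)\le\xi^{\lambda,B(n),\BT_d}_t(y)$ for all $y,t$. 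In particular $x\in\xi^{\lambda,B(n),\BT_d}_t$ whenever $x$ survives in its own subtree, and since the sites of $\Lambda_n$ are distinct and lie in $B(n)$,
\[
\bigl|\xi^{\lambda,B(n),\BT_d}_t\cap B(n)\bigr|\ \ge\ Z_t:=\sum_{x\in\Lambda_n}\ind_{\{x\in\xi^{\lambda,\{x\},\BT^d_x}_t\}} .
\]

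The summands of $Z_t$ are independent, since the disjoint subtrees $\BT^d_x$ depend on disjoint families of Poisson clocks, and each has mean $\ge c$; hence $Z_t$ stochastically dominates a $\mathrm{Bin}(d^{\,n},c)$ variable, uniformly in $t$. Therefore
\[
\sup_t P^{\xi_{B(n)}}\!\left(\bigl|\xi^{\lambda,\BT_d}_t\cap B(n)\bigr|\le k\right)\ \le\ P\!\left(\mathrm{Bin}(d^{\,n},c)\le k\right),
\]
and for fixed $k$ and $c>0$ the right-hand side tends to $0$ as $n\to\infty$ (the mean $c\,d^{\,n}\to\infty$, so Chernoff's bound applies), which is exactly the assertion. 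I expect the only delicate point to be the coupling step: one must check on the common graphical representation that discarding the single arrow entering $\BT^d_x$ from $\overrightarrow{x}$ keeps the restricted process below the full one while rendering the restricted processes genuinely independent. The uniform-in-$t$ lower bound—precisely what strong survival buys through Lemma \ref{corollary_equivalent}—together with the binomial tail estimate are then routine.
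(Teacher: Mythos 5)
Your proposal is correct and follows essentially the same route as the paper: both extract the uniform bound $\inf_t P^o(o\in\xi^{\lambda,\BT^d_o}_t)>0$ from Lemma \ref{corollary_equivalent}, dominate the process started from $B(n)$ from below by independent copies restricted to disjoint rooted subtrees at depth $n$, and conclude with a binomial tail bound. Your restriction to the $d^{\,n}$ descendants of $o$ at distance $n$ is in fact slightly cleaner than the paper's use of all vertices at distance $n$, since it guarantees the subtrees are genuinely pairwise disjoint.
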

\begin{proof}
By Lemma \ref{corollary_equivalent}, 
$$
\inf_{t\ge 0}P^o(o\in\xi_t^{\lambda,\BT^d_o})\ge p>0.
$$

Since $\BT^d_x\cap\BT^d_y=\emptyset$ for any $ x\ne y$ but $|x|=|y|$, $\xi_t^{\lambda,\BT^d_x}$ and $\xi_t^{\lambda,\BT^d_y}$ are independent, so that for any $k$,

\begin{align*}
\sup_tP^{\xi_{B(n)}}(|\xi_t^{\lambda,\BT_d}\cap B_o(n)|\le k)
&\le \sup_tP^{\xi_{B(n)}}(|(\cup_{|x|=n}\xi_t^{\lambda,x,\BT_x^d})\cap B(n)|\le k)
\\&\le P(B((d+1)^n,p)\ge k)
\end{align*}
 where $B((d+1)^n,p)$ is the Binomial distribution with parameter $(d+1)^n$ and $p$, which converges to 0 as $n\to\infty$.

\end{proof}

\begin{lemma}\label{lem44}
 When $\xi_t^{\lambda,\BT_d}$ survives strongly,
  \begin{center}$\lim\limits_{n}\sup\limits_{|A|=n}P^{\xi_A}(\xi_t^{\lambda,\BT_d}\ne\textbf{0}\text{ for any }t)=1$.\end{center}
\end{lemma}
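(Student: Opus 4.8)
The plan is to reduce survival from a large initial set to the survival of a growing number of \emph{independent} subtree processes, at least one of which must persist with overwhelming probability. The starting point is strong survival at the level of a single subtree, and the conclusion is obtained by a straightforward monotonicity step.

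First I would record what strong survival gives for one subtree. By Lemma \ref{corollary_equivalent}, strong survival is equivalent to $\inf_t P^o(o\in \xi^{\lambda,\BT^d_o}_t)>0$. I claim this forces the restricted process $\xi^{\lambda,\BT^d_o}_t$, seeded at its root $o$, to survive for all time with some fixed probability $\rho>0$. Indeed, if it reached $\textbf{0}$ almost surely, then $P^o(o\in\xi^{\lambda,\BT^d_o}_t)\le P^o(\xi^{\lambda,\BT^d_o}_t\ne\textbf{0})\to 0$, contradicting the positive infimum. By homogeneity of $\BT_d$ the subtree rooted at any vertex carries the same value $\rho$.

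Next I would exploit exactly the independence used in the proof of Lemma \ref{lem53}. Fix $n$ and consider the sphere $\{x:|x|=n\}\subseteq B(n)$. The subtrees $\BT^d_x$ over these boundary vertices are pairwise disjoint, so on a common graphical representation the restricted processes $\{\xi^{\lambda,\BT^d_x}_t\}_{|x|=n}$, each seeded at its own root, are mutually independent, and each survives for all time with probability $\rho$. Since each such process consults only arrows inside its own $\BT^d_x$ and is seeded at a vertex of $B(n)$, the domination $\xi^{\lambda,\BT^d_x}_t\subseteq \xi^{\lambda,B(n),\BT_d}_t$ holds simultaneously for every boundary $x$ on this coupling. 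Hence the survival of even one boundary subtree forces $\xi^{\lambda,B(n),\BT_d}_t\ne\textbf{0}$ for all $t$, so that
$$
P^{\xi_{B(n)}}(\xi^{\lambda,\BT_d}_t\ne\textbf{0}\text{ for any }t)\ \ge\ 1-(1-\rho)^{|\{x:|x|=n\}|}\ \longrightarrow\ 1\qquad(n\to\infty).
$$

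Finally I would pass from balls to arbitrary sets of prescribed cardinality. By the initial-configuration monotonicity \eqref{25}, the map $n\mapsto \sup_{|A|=n}P^{\xi_A}(\xi^{\lambda,\BT_d}_t\ne\textbf{0}\text{ for any }t)$ is non-decreasing, since any set can be enlarged by adding a vertex without decreasing the survival probability; thus it suffices to drive one subsequence to $1$, and taking $A=B(m)$ with $n=|B(m)|\to\infty$ does this by the display above. The main obstacle is the simultaneous independence-and-domination bookkeeping in the middle step: one must realize the boundary subtree processes as jointly independent yet all dominated by the process started from $B(n)$ on a single coupling, which is precisely where the disjointness of the $\BT^d_x$ and the fact that the restricted processes read only arrows inside their own subtree are used. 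Everything else is a short consequence of Lemma \ref{corollary_equivalent} and monotonicity.
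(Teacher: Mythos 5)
There is a genuine gap, and it lies in how you read the quantifier. Although the lemma is typeset with a supremum, the paper's own proof bounds $\sup_{|A|=n}P^{\xi_A}(\xi^{\lambda,\BT_d}_t\text{ dies out})\le[\max\{p_1,p_2\}]^{n/2}\to 0$, i.e.\ it establishes the \emph{uniform} statement that the survival probability tends to $1$ over \emph{all} sets of cardinality $n$; and that uniform version is exactly what is invoked later (in the proof of Theorem \ref{thm72} one needs $P^{\zeta_D}(\cdot\ne\textbf{0}\ \forall t)>1-\epsilon$ for \emph{every} finite $D$ with $|D|\ge N$). Your argument only treats the initial sets $A=B(m)$: the boundary sphere of a ball conveniently carries $(d+1)d^{m-1}$ pairwise disjoint descendant subtrees, so independence is immediate. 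Your final monotonicity step ("it suffices to drive one subsequence to $1$") is valid for the literal supremum but proves only that \emph{some} sequence of initial sets survives with high probability, which is a much weaker assertion and does not serve the later application. For a general $A$ of cardinality $n$ --- say $n$ vertices strung along a single ray --- there is no sphere of disjoint subtrees to appeal to, and your construction gives nothing.

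The missing ingredient is the combinatorial decomposition the paper uses to manufacture order-$n$ many disjoint, independently evolving subtree processes seeded \emph{inside an arbitrary} $A$: writing $\bar A$ for the minimal connected subtree containing $A$, one splits $A$ into $F(A)$ (points with at least one child outside $\bar A$) and $H(A)$ (points all of whose children lie in $\bar A$), observes that one of the two parts has size at least $n/2$, and then either hangs a disjoint copy of $\BT^d_{x^*}\cup\{x\}$ off a child $x^*\notin\bar A$ of each $x\in F(A)$, or uses the disjoint subtrees $\BT^d_x$ for $x\in\hat A$ in the second case; each restricted process dies out with probability $p_1<1$ or $p_2<1$ respectively, giving the geometric bound. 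Your first two steps (extracting a fixed survival probability $\rho>0$ for a single rooted subtree from Lemma \ref{corollary_equivalent}, and exploiting independence of processes restricted to disjoint subtrees together with domination by the full process) are correct and are indeed the probabilistic core of the paper's argument; what is missing is the reduction showing that \emph{every} $n$-point set, regardless of its geometry, supports at least $n/2$ such disjoint seeds.
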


\begin{proof}

Assume $A\subset\BT_o^d$ and $|A|=n$. 

Define
\bea
&H(A)=\{x\in A:\text{ all children of $x$ are in $\bar{A}$}\},
\\&F(A)=A\backslash H(A),
\\&\hat A=\{x\in A,\BT_x^d\cap\bar A=\{x\}\}
\\&p_1=P^o(\xi^{\lambda,\BT^d_{x_1}\cup\{o\}}_t\text{ dies out})
\\&p_2=P^o( \xi^{\lambda,\BT^d_o}_t\text{ dies out})
\eea
where $x_1$ is a child of $o$ and $\bar A$ is the minimum connected component containing $A$. 

From Lemma \ref{corollary_equivalent}, $ \xi^{\lambda,\BT_d}_t$ survives strongly implies $ \xi^{\lambda,\BT^d_o}_t$ survives strongly so that $p_2<1$ and thus $p_1<1$.

 If $ |F(A)|\geq \frac{n}{2}$, for any $x\in F(A)$, we can choose a child $x^*\in\bar{A}^c$, thus the processes $\xi^{\lambda,x,\BT^d_{x^*}\cup\{x\}}_t,{x\in F(A)}$ are identically independent. Then by monotonicity,
\bea\label{F}
P^{\xi_A}(\xi^{\lambda,\BT_d}_t \text{ dies out})&\leq P(\forall x\in F(A),\xi^{\lambda,x,\BT^d_{x^*}\cup\{x\}}_t\text{ dies out})
\\&=\prod_{x\in F(A)}P( \xi^{\lambda,x,\BT^d_{x^*}\cup\{x\}}_t\text{ dies out})\\&\leq p_1^{\frac{n}{2}}.\eea
 Otherwise, when $|H(A)|\geq \frac{n}{2}$, we claim that $|\hat{A}|\geq |H(A)|$. We prove the claim by induction. First when $|A|=1$, we have $|H(A)|=0$, so that $|\hat{A}|\geq |H(A)|$ holds immediately. Next we assume that the claim is true for any $A$ satisfying $|A|=n$. Now we come to the case when $|A|=n+1$. Assume $x\in A$ has the maximum generation, i.e.  
$$
m_{x}-n_{x}=\max\{m_y-n_y: \ y\in A\}. 
$$
If there is more than one such point, we choose the ``smallest" one according to the lexicographical order of $(n_z, r(1),r(2),\cdots, r(m_z))$.

Then 
\begin{equation}\label{eq1}
|H(A\backslash\{x\})|=
\begin{cases}
|H(A)|-1&\text{ if } \overleftarrow x\in H(A)\\
 |H(A)|&\text{ otherwise }.
 \end{cases}
 \end{equation} 
where $\overleftarrow x$ represents the parent of $x$.  Let $\tilde{x}$ be (the unique) point in $\overline{A\backslash\{x\}}$ such that $d(x,\tilde x)=\min\{d(x,y), \ y\in \overline{A\backslash\{x\}}\}$. I.e., 
\begin{itemize} 
\item If $x$ is a descendant of the common ancestor in $\overline{A\backslash\{x\}}$, then $\tilde{x}$ has to be the nearest ancestor of $x$ in $\overline{A\backslash\{x\}}$.
\item Otherwise, $\tilde{x}$ has to be the common ancestor itself. 
\end{itemize}
 Note that $\bar{A}=\overline{A\backslash\{x\}}\cup\overline{\{x,\tilde{x}\}}$, $\overline{A\backslash\{x\}}\cap\overline{\{x,\tilde{x}\}}=\{\tilde{x}\}$ and $\hat A=\hat {\bar A}$, so that
\begin{equation}\label{eq2}
|\hat A|
\begin{cases}
 =|\widehat{A\backslash\{x\}}|+1&\text{ if } \overleftarrow x\in H( A)\\
 \ge|\widehat{A\backslash\{x\}}|&\text{ otherwise }.
 \end{cases}
 \end{equation}
 Then by the inductive hypothesis and \eqref{eq1} and \eqref{eq2}, the claim holds for the case when $|A|=n+1$ and thus holds for any $n$.
 For any $x\in\hat{A}$, we consider the process $\xi_t^{\lambda,x,\BT_x^d}$. Obviously, $\xi_t^{\lambda,x,\BT_x^d},x\in\hat A$ are identically independent,
so that
\bea\label{H}
P^{\xi_A}(\xi^{\lambda,\BT_d}_t \text{ dies out})&\leq P(\forall x\in\hat A,\xi^{\lambda,x,\BT^d_{x}}_t\text{ dies out})
\\&=\prod_{x\in \hat A}P( \xi^{\lambda,x,\BT^d_{x}}_t\text{ dies out})\\&\leq p_2^{\frac{n}{2}}.\eea

Thus by \eqref{F}, \eqref{H}, and the translation invariance,
\begin{center}$\lim\limits_{n}\sup\limits_{|A|=n}P( \xi^{\lambda,x,\BT^d_{x}}_t\text{ dies out})\leq\lim\limits_n[max\{p_1, p_2\}]^{\frac{n}{2}}=0$\end{center}\end{proof}
\begin{remark}\label{rmk62}
It is easy to verify that all the lemmas above hold if we replace $\xi^{\lambda,\BT_d}_t$ by the threshold-one voter model with spontaneous death we are talking about in this paper. 
\end{remark}

%%%%%%%%%%%%%%%%%%%%%%%%%%%%%%%%%%%%%%%%%%%%%%%%%%%%%%%%%%%%%%
% !TEX root = TVM_0516.tex
%%%%%%%%%%%%%%%%%%%%%%%%%%%%%%%%%%%%%%%%%%%%%%%%%%%%%%%%%%%%%%%%%%%%%%%%%%%%%%%%%%%

\subsection{Complete convergence theorem}

\label{CCT}

Inspired by Theorem 1.2 of \cite{bramson1991annihilating} and Theorem 1.12 of \cite{liggett1999stochastic}, we will prove the following statement and then prove the complete convergence theorem for the threshold-one contact process, which are also applicable to the threshold-one voter model with positive spontaneous death and only annihilating duality.
\begin{proposition}\label{lem57}
For any $\mu$ satisfying $\mu(\xi=\textbf{0})=0$, $\mu$ is translation invariant or $\mu=\delta_\xi$ where $\xi$ is dense, we have
$$
\lim_{t\to\infty}{\mu} S^\lambda(t)=\bar \nu^\lambda,
$$where $S^\lambda(t)$ is the Markov semigroup and $\bar \nu^\lambda$ is the upper invariant measure of ${\xi}^{\lambda,\BT_d}_t$.
\end{proposition}
 \begin{proof}
 
  Let 
  \begin{center}$\Omega_{\infty}=\{\tilde{\xi}^{\lambda,\BT_d}_t\ne\textbf{0}$ for all $t $$\}$.\end{center}
By \eqref{21}, for any $\xi_A$,
$$
\bar{\nu}^\lambda(\xi:\xi\cap \xi_A\ne\textbf{0})=\tilde{P}^{\xi_A}(\Omega_{\infty}).
$$

Thus it suffices to prove that for any $A\subseteq\BT_d$,
 \bea\label{cd}P^{\mu,\xi_A}( \xi^{\lambda,\BT_d}_{2}\cap\tilde{\xi}_t^{\lambda,\BT_d}\ne\textbf{0})\to\tilde{P}^{\xi_A}(\Omega_{\infty}).\eea
when $\tilde{P}^{\xi_A}(\Omega_{\infty})>0$ otherwise the convergence becomes obvious, where $P^{\mu,\xi_A}$ is the coupling measure of $\xi^{\lambda,\BT_d}_{2}$ and $\tilde{\xi}_t^{\lambda,\BT_d}$ starting from $\mu$ and $\delta_{\xi_A}$, which is constructed by the graph representation.

Firstly, we claim that for any $\epsilon>0 , N$,  we can find $K$ such that for any $|A|\geq K$,\bea\label{cc}P^\mu( |\xi^{\lambda,\BT_d}_1\cap A|\leq N )<\epsilon.\eea

When $\mu=\delta_\xi$ and $\xi$ is dense, denote $\CI_x=\{\xi, \xi\cap B_x(N)\ne\textbf{0}\}$, 
we have
 \bea\label{5111}\inf\limits_xP^\xi(\xi_1^{\lambda,\BT_d}(x)=1)
 &\geq\inf\limits_{x}\inf\limits_{\xi\in\CI_x}P^\xi(\xi_1^{\lambda,\BT_d}(x)=1)
 \\&=\inf\limits_{\xi\in\CI_o}P^\xi(\xi_1^{\lambda,\BT_d}(o)=1)\quad\text{ by translation invariance}
 \\&=\inf\limits_{\xi\in\CI_o}P^o(\tilde{\xi}^{\lambda,\BT_d}_1\cap\xi\text{ $\ne\textbf{0}$})\quad\text{ by duality}
 \\&\geq \inf\limits_{\xi\in\CI_o}P^o(\tilde{\xi}^{\lambda,\BT_d}_1\subset B_N(o)\text{ and }\tilde{\xi}^{\lambda,\BT_d}_1 \cap\xi\text{ $\ne\textbf{0}$})
\\&=\inf\limits_{\xi\subseteq B_N(o),\xi\in\CI_o}P^o(\tilde{\xi}^{\lambda,\BT_d}_1\subseteq B_N(o),\tilde{\xi}^{\lambda,\BT_d}_1 \cap\xi\text{ $\ne\textbf{0}$})
\\&=p>0\eea where the last equality holds because that there are finite configurations satisfying $\xi\subseteq B(N)$.

By Theorem 4.6 of Chapter I of \cite{liggett1985}, for any $A\in Y$, there exists $\lambda=\lambda(diam(A))$ where $diam(A)=\max\{|x-y|:x,y\in A\}$ and $\lambda(k)\to 0$ as $k\to\infty$ such that
\bea\label{5100}
E^{\xi}\prod\limits_{x\in A}[1-\xi^{\lambda,\BT_d}_1(x)]\le \prod\limits_{x\in A}E^{\xi}[1-\xi^{\lambda,\BT_d}_1(x)]+\lambda|A|.
\eea
It follows from \eqref{5111} and \eqref{5100} that
\bea\label{dirac}
P^\xi( \xi_1^{\lambda,\BT_d}\cap \xi_A=\textbf{0} )
&=E^{\xi}\prod\limits_{x\in A}[1-\xi^{\lambda,\BT_d}_1(x)]
\\&\le\prod\limits_{x\in A}P^\xi(\xi_1^{\lambda,\BT_d}(x)=0)+\lambda|A|
\\&\leq (1-p)^{|A|}+\lambda|A|.
\eea

When $\mu$ is translation invariant, by \eqref{5100} and the AM-GM inequality, we have
\bea\label{tran}
E^{\mu}\prod\limits_{x\in A}[1-\xi^{\lambda,\BT_d}_1(x)]
&\le \int\prod\limits_{x\in A}E^{\xi}[1-\xi^{\lambda,\BT_d}_1(x)]\mu(d\xi)+\lambda|A|
\\&\le E^{\mu}[1-\xi^{\lambda,\BT_d}_1(o)]^n+\lambda|A|,
\eea
where the first term converges to 0 when $n$ converges to $\infty$.

Then the claim \eqref{cc} follows from \eqref{dirac} and \eqref{tran}.
Since $|\tilde{\xi}_t^{\lambda,\BT_d}|\rightarrow\infty$ in probability on $\Omega_{\infty}$, by a simple calculation, we have
\bea\label{590}
|\xi_1^{\lambda,\BT_d}\cap\tilde{\xi}_t^{\lambda,\BT_d}|\rightarrow\infty \text{ in probability under $P^{\mu,\xi_A}$ on $\Omega_{\infty}$}.
\eea

Next, let \begin{center}$U_t=\xi_1^{\lambda,\BT_d}\cap\tilde{\xi}_t^{\lambda,\BT_d},
V_t=\{x:x\in U_t, \text{ $x$ is isolated}\}$\end{center}
where $x$ is isolated iff $T_n^{x,\CN_x}\cap(\{x\}\times[1,2])=\emptyset$ and $T_n^{y,\CN_y}\cap(\{y\}\times[1,2])=\emptyset$ for all $x\in \CN_y$. 
It makes sure that
 \bea\label{53}
|\xi_{2}^{\lambda,\BT_d}\cap\tilde{\xi}^{\lambda,\BT_d}_t\cap V_t|=\sum\limits_{x\in V_t}1_{\{T_n^{x,\emptyset}\cap (\{x\}\times[1,2])=\emptyset\}}.
\eea

Since the events $\{x\in V_t\}_{x\in U_t}$  are 2-dependent(i.e. independent when the distance is larger than 2), by \eqref{590},
\bea\label{V}
|V_t|\to\infty  \text{ in probability under $P^{\mu,\xi_A}$ on }\Omega_{\infty}.\eea

Let 
\bea\label{54}
g_x=1_{\{T_n^{x,\CN_x}\cap (\{x\}\times[1,2])=\emptyset\}}.
\eea
Given $ \CG_t=\sigma(V_t)$, $\{g_x, x\in V_t\}$ are identically independent and independent of $V_t$, moreover, each has positive probability $p_0$ to be 1. By 
\eqref{53}, \eqref{V} and \eqref{54},
\bea\label{551}
1\ge P^{\mu,\xi_A}( \xi_{2}^{\lambda,\BT_d}\cap\tilde{\xi}^{\lambda,\BT_d}_t\text{ $\ne\textbf{0}$ } |\CG_t)\ge P^{\mu,\xi_A}(\sum\limits_{x\in V_t}g_x\ge 1|\CG_t)=P(B(n,p_0)\ge1)|_{n=|V_t|}\to 1
\eea in probability on $\Omega_{\infty}$.
The interpretation is that when $|V_t|$ is large enough, we can find at least one vertex in $V_t$ whose state does not change in the unit time. 
  
  Taking expectation of \eqref{551} and by bounded convergence theorem,
  \begin{align*}P^{\mu,\xi_A}( \xi_{2}^{\lambda,\BT_d}\cap\tilde{\xi}^{\lambda,\BT_d}_t\text{ $\ne\textbf{0}$} |\Omega_{\infty})\to1,
 \end{align*}
which implies \eqref{cd} and hence the proof is complete.
 
 \end{proof}

{\bf Proof of Theorem \ref{complete convergence}:}
\begin{proof}
 When $\lambda\le\lambda_1(d)$ we can get the result by Lemma \ref{le33}.

 When $\lambda_1(d)<\lambda\le\lambda_2(d)$ we can get the result by Proposition \ref{lem57}.

Therefore we only need to prove the result when $\lambda>\lambda_2(d)$. By Lemma \ref{corollary_equivalent}, $\inf_tP^o(\xi_{t}^{\lambda,\BT_d}(o)=1)>0$, so that $\tilde\xi_{t}^{\lambda,\BT_d}$ survive strongly.

Define the stoping times
\bea
&\tau_{B(n)}=\inf\{t\ge0:B(n)\subset \xi^{\lambda,\BT_d}_t\},
\\&\tilde{\tau}_{B(n)}=\inf\{t\ge0:B(n)\subset \tilde{\xi}^{\lambda,\BT_d}_t\}.
\eea
 By Levy's 0-1 Law, we can prove that $\{\xi_{t}^{\lambda,\BT_d}\text{ survives strongly}\}\subseteq{\Omega}_A$ and $\{\tilde\xi_{t}^{\lambda,\BT_d}\text{ survives strongly}\}\subseteq\tilde{\Omega}_A$ a.s., which implies that for any finite $B$,
\bea\label{550}
&P^{\xi_A}(\xi_{t}^{\lambda,\BT_d}\cap\xi_B\text{ $\ne\textbf{0}$},\Omega^c_{B(n)})\to 0,
\\&\tilde{P}^{\xi_{A}}(\tilde{\xi}^{\lambda,\BT_d}_{t}\cap \xi_{B}\text{ $\ne\textbf{0}$},\tilde{\Omega}_{B(n)}^c)\to 0
\eea where $\Omega_A=\{\tau_{A}<\infty\}$ and
$\tilde{\Omega}_A=\{\tilde{\tau}_{A}<\infty\}$.

For any $A$, finite $B$,
\bea\label{533}
&\lim\limits_{t\to\infty}P^{ \xi_A}( \xi_{t+1}^{ \lambda,\BT_d}\cap \xi_B\text{  $\ne\textbf{0}$})
\\&=\lim\limits_{t\to\infty}P^{ \xi_A}( \xi_{t+1}^{ \lambda,\BT_d}\cap \xi_B\text{  $\ne\textbf{0}$},\Omega_{B(n)})\quad\text{by \eqref{550}}
\\&=\lim\limits_{t\to\infty}E^{ \xi_A}[P^{ \xi^{ \lambda,\BT_d}_{\tau_{B(n)}}}[ \xi^{ \lambda,\BT_d}_{t+1-\tau_{B(n)}}\cap \xi_B\text{  $\ne\textbf{0}$}],\Omega_{B(n)}]\quad\text{by strong Markov property}
\\&=\lim\limits_{t\to\infty}E^{ \xi_A}[\tilde{P}^{ \xi_B}[\tilde{ \xi}^{ \lambda,\BT_d}_{t+1-\tau_{B(n)}}\cap  \xi^{ \lambda,\BT_d}_{\tau_{B(n)}}\text{  $\ne\textbf{0}$}],\Omega_{B(n)}]\quad\text{by \eqref{20}}
\\&=\lim\limits_{t\to\infty}E^{ \xi_A}[\tilde{P}^{ \xi_B}[\tilde{ \xi}^{ \lambda,\BT_d}_{t+1-\tau_{B(n)}}\cap  \xi^{ \lambda,\BT_d}_{\tau_{B(n)}}\text{  $\ne\textbf{0}$},\tilde{\Omega}_{B(n)}],\Omega_{B(n)}]\quad\text{by \eqref{550} }
\\&=\lim\limits_{t\to\infty}E^{ \xi_A}[\tilde{E}^{ \xi_B}[\tilde{P}^{\tilde{ \xi}^{ \lambda,\BT_d}_{\tilde{\tau}_{B(n)}}}[\tilde{ \xi}^{ \lambda,\BT_d}_{t+1-\tau_{B(n)}-\tilde{\tau}_{B(n)}}\cap  \xi^{ \lambda,\BT_d}_{\tau_{B(n)}}\text{  $\ne\textbf{0}$}],\tilde{\Omega}_{B(n)}],\Omega_{B(n)}]\quad\text{by strong Markov property}
\\&=\lim\limits_{t\to\infty}E^{ \xi_A}[\tilde{E}^{ \xi_B}[P^{ \xi^{ \lambda,\BT_d}_{\tau_{B(n)}}}[{ \xi}^{ \lambda,\BT_d}_{t+1-\tau_{B(n)}-\tilde{\tau}_{B(n)}}\cap\tilde{ \xi}^{ \lambda,\BT_d}_{\tilde{\tau}_{B(n)}} \text{  $\ne\textbf{0}$}],\tilde{\Omega}_{B(n)}],\Omega_{B(n)}] \quad\text{by \eqref{20}}.
\eea

When $t>\tau_{B(n)}+\tilde{\tau}_{B(n)}$,
\bea\label{525}
&|P^{ \xi^{ \lambda,\BT_d}_{\tau_{B(n)}}}( \xi^{ \lambda,\BT_d}_{t+1-\tau_{B(n)}-\tilde{\tau}_{B(n)}}\cap \tilde{ \xi}^{ \lambda,\BT_d}_{\tilde{\tau}_{B(n)}}\text{  $\ne\textbf{0}$})-1|
\\&\le P^{ \xi^{ \lambda,\BT_d}_{\tau_{B(n)}}}(| \xi^{ \lambda,\BT_d}_{t-\tau_{B(n)}-\tilde{\tau}_{B(n)}}\cap \tilde{ \xi}^{ \lambda,\BT_d}_{\tilde{\tau}_{B(n)}}|\ge k)|P^{ \xi^{ \lambda,\BT_d}_{\tau_{B(n)}}}( \xi^{ \lambda,\BT_d}_{t+1-\tau_{B(n)}-\tilde{\tau}_{B(n)}}\cap \tilde{ \xi}^{ \lambda,\BT_d}_{\tilde{\tau}_{B(n)}}\text{  $\ne\textbf{0}$}|| \xi^{ \lambda,\BT_d}_{t-\tau_{B(n)}-\tilde{\tau}_{B(n)}}\cap \tilde{ \xi}^{ \lambda,\BT_d}_{\tilde{\tau}_{B(n)}}|\ge k)-1|
\\&+2P^{ \xi^{ \lambda,\BT_d}_{\tau_{B(n)}}}(| \xi^{ \lambda,\BT_d}_{t-\tau_{B(n)}-\tilde{\tau}_{B(n)}}\cap \tilde{ \xi}^{ \lambda,\BT_d}_{\tilde{\tau}_{B(n)}}|\le k)
\\&\le |P^{ \xi^{ \lambda,\BT_d}_{\tau_{B(n)}}}( \xi^{ \lambda,\BT_d}_{t+1-\tau_{B(n)}-\tilde{\tau}_{B(n)}}\cap \tilde{ \xi}^{ \lambda,\BT_d}_{\tilde{\tau}_{B(n)}}\text{  $\ne\textbf{0}$}|| \xi^{ \lambda,\BT_d}_{t-\tau_{B(n)}-\tilde{\tau}_{B(n)}}\cap \tilde{ \xi}^{ \lambda,\BT_d}_{\tilde{\tau}_{B(n)}}|\ge k)-1|
\\&+2P^{ \xi^{ \lambda,\BT_d}_{\tau_{B(n)}}}(| \xi^{ \lambda,\BT_d}_{t-\tilde{\tau}_{B(n)}-\tau_{B(n)}}\cap B(n)|\le k)
\eea
for any $n,k$.

Let $U_t=\xi^{\lambda,\BT_d}_{t-\tau_{B(n)}-\tilde{\tau}_{B(n)}}\cap \tilde{\xi}^{\lambda,\BT_d}_{\tilde{\tau}_{B(n)}}$ and $V_t$ be the corresponding variable, by  \eqref{551},
for any $\epsilon>0$,
\bea\label{527}
&|P^{\xi_{\tau_{B(n)}}}(\xi^{\lambda,\BT_d}_{t+1-\tau_{B(n)}-\tilde{\tau}_{B(n)}}\cap \tilde{\xi}^{\lambda,\BT_d}_{\tilde{\tau}_{B(n)}}\text{ $\ne\textbf{0}$}||\xi^{\lambda,\BT_d}_{t-\tau_{B(n)}-\tilde{\tau}_{B(n)}}\cap \tilde{\xi}^{\lambda,\BT_d}_{\tilde{\tau}_{B(n)}}|\ge k)-1|<\epsilon/2
\eea  for any $t,n$ when $k$ is large enough.

By Lemma \ref{lem53} and Remark \ref{rmk62}, for any $k$,
\bea\label{526}
P^{\xi_{B(n)}}(|\xi_{t-\tau_{B(n)}-\tilde{\tau}_{B(n)}}\cap B(n)|\le k)<\epsilon/2
\eea  for any $t>\tau_{B(n)}+\tilde{\tau}_{B(n)}$ when $n$ is large enough.

Thus for any $\epsilon>0$, we can find large enough $t_0,n_0$ such that for any $t\ge t_0,n\ge n_0$,
$$
|P^{\xi_A}(\xi_{t+1}^{\lambda,\BT_d}\cap\xi_B\text{ $\ne\textbf{0}$})-P^{\xi_A}(\Omega_{B(n)})\tilde{P}^{\xi_B}(\tilde{\Omega}_{B(n)})|<\epsilon.
$$
Let $\epsilon\to 0$, we can get
\bea\label{534}
\lim\limits_{t\to\infty}P^{\xi_A}(\xi_{t+1}^{\lambda,\BT_d}\cap\xi_B\text{ $\ne\textbf{0}$})&=\lim\limits_{n\to\infty}P^{\xi_A}(\Omega_{B(n)})\tilde{P}^{\xi_B}(\tilde{\Omega}_{B(n)}).
\eea
 We claim that
\bea\label{535}
\lim\limits_{n\to\infty}P^{\xi_A}(\Omega_{B(n)})=P^{\xi_A}(o\in\xi_t^{\lambda,\BT_d}\text{ i.o.}).
\eea
Since on one hand,
\bea\label{544}
P^{\xi_A}(\tau_{B(n)}<\infty,o\in\xi_t^{\lambda,\BT_d}\text{ f.o.})&=P^{\xi_A}[P^{\xi_{\tau_{B(n)}}}[o\in\xi_t^{\lambda,\BT_d}\text{ f.o.}],\tau_{B(n)}<\infty]
\\&\le P^{\xi_A}[P^{\xi_{\tau_{B(n)}}}[o\in\xi_t^{\lambda,\BT_d}\text{ f.o.}]]
\\&=P^{\xi_A}[P^{\xi_{\tau_{B(n)}}}[\tau_{\textbf{0}}<\infty]]
\\&\le P^{\xi_A}[P^{B(n)}[\tau_{\textbf{0}}<\infty]]
\\&\to 0
\eea
where the third equality is the equivalence of the survival and strong survival and the last term is from Lemma \ref{lem44}.  On the other hand, by Levy's 0-1 Law,
\bea\label{56}
P^{\xi_A}(\tau_{B(n)}<\infty|o\in\xi_t^{\lambda,\BT_d}\text{ i.o.})=1.
\eea
Then \eqref{535} comes from \eqref{544} and \eqref{56}.

Similarly, we can get
\bea\label{hhhh}
\lim\limits_{n\to\infty}\tilde P^{\xi_B}(\tilde \Omega_{B(n)})=\tilde P^{\xi_B}(o\in\tilde\xi_t^{\lambda,\BT_d}\text{ i.o.}).
\eea

By \eqref{534}, \eqref{535}, \eqref{hhhh} and the equivalence of survival and strong survival when $\xi_t^{\lambda,\BT_d}$ survives strongly,
\bea\label{536}
\lim\limits_{t\to\infty}P^{\xi_A}(\xi_{t+1}^{\lambda,\BT_d}\cap\xi_B\text{ $\ne\textbf{0}$})=P^{\xi_A}(o\in\xi_t^{\lambda,\BT_d}\text{ i.o.})P^{\xi_B}(o\in\tilde\xi_t^{\lambda,\BT_d}\text{ i.o.})=P^{\xi_A}(\xi_t^{\lambda,\BT_d}\ne\textbf{0},\forall t)\tilde{P}^{\xi_B}(\tilde\xi_t^{\lambda,\BT_d}\ne\textbf{0},\forall t).\eea
By \eqref{21},
\bea
\lim\limits_{t\to\infty}P^{\xi_A}(\xi_{t+1}^{\lambda,\BT_d}\cap\xi_B\text{ $\ne\textbf{0}$})&=P^{\xi_A}(\xi_t^{\lambda,\BT_d}\ne\textbf{0},\forall t)\bar \nu^\lambda(\xi:\xi_B\cap\xi\ne\textbf{0})
\\&=\delta_{\textbf{0}}(\xi:\xi_B\cap\xi\ne\textbf{0})+P^{\xi_A}(\xi_t^{\lambda,\BT_d}\ne\textbf{0},\forall t)\bar \nu^\lambda(\xi:\xi_B\cap\xi\ne\textbf{0}),
\eea
which implies \eqref{ccc} and the proof is complete.
\end{proof}

%%%%%%%%%%%%%%%%%%%%%%%%%%%%%%%%%%%%%%%%%%%%%%%%%%%%%%%%%%%%%%%%%%%%%%%%%%%%%%%%
%%%%

% !TEX root = TVM_0516.tex
%%%%%%%%%%%%%%%%%%%%%%%%%%%%%%%%%%%%%%%%%%%%%%%%%%%%%%%%%%%%%%%%%%%%%%%%%%%%%%%%%%%
\section{The threshold-one voter model with positive spontaneous death }

Now we use the same method in the proof of Proposition \ref{lem57} and Theorem \ref{complete convergence} to prove the corresponding Proposition \ref{lem7} and Theorem \ref{thm62} for the threshold-one voter model with positive spontaneous death. As thus, the proofs here are very similar to those in Section \ref{CCT} except for some details, it may be a bit tedious.
\begin{proposition}\label{lem7}
For any $\mu$ satisfying $\mu(\eta=\textbf{0})=0$, $\mu$ is  translation invariant or $\mu=\delta_\eta$ where $\eta$ is dense, we have
$$
\lim_{t\to\infty}{\mu} S^\varepsilon(t)=\bar \nu^\varepsilon,
$$where $S^\varepsilon(t)$ is the Markov semigroup and $\bar \nu^\varepsilon$ is the maximal invariant measure of ${\eta}^{ \varepsilon,\BT_d}_t$.

\end{proposition}
 \begin{proof}
 
  Let 
  \begin{center}$\Omega_{\infty}=\{\tilde{\eta}^{ \varepsilon,\BT_d}_t\ne\emptyset$ for all $t $$\}$.\end{center}
By annihilating duality \eqref{2_cd}, 
\begin{center}$P^{\nu_{1/2}}(\eta_{t}^{ \varepsilon,\BT_d}\cap\eta_A\text{ is odd } )=\tilde{P}^{\eta_A}( \tilde{\eta}^{ \varepsilon,\BT_d}_{t}\cap{\nu_{1/2}}\text{ is odd } )=\frac{1}{2}\tilde P^{\eta_A}(\tilde{\eta}^{ \varepsilon,\BT_d}_t\ne\textbf{0})\to\frac{1}{2}\tilde P^{\eta_A}(\Omega_{\infty})$.\end{center}
Then it suffices to prove that for any $A\subseteq\BT_d$,
 \bea\label{6_1}P^{\mu,\eta_A}( \eta_{2}^{ \varepsilon,\BT_d}\cap\tilde\eta_t^{\BT_d}\text{ is odd })\to\frac{1}{2}\tilde P^{\eta_A}(\Omega_{\infty})\eea
when $\tilde P^{\eta_A}(\Omega_{\infty})>0$.
Since the dual process has positive death rate, it is easy to verify that $|\tilde\eta^{ \varepsilon,\BT_d}_t|\to\infty$ on $\Omega_\infty$. Then applying the proof of \eqref{590} in Proposition \ref{lem57}, we can get
\begin{center}$|\eta_1^{ \varepsilon,\BT_d}\cap\tilde{\eta}_t^{ \varepsilon,\BT_d}|\rightarrow\infty$ in probability under $P^{\mu,\eta_A}$ on $\Omega_{\infty}$.\end{center}

Let \begin{center}$U_t=\eta_1^{ \varepsilon,\BT_d}\cap\tilde{\eta}_t^{ \varepsilon,\BT_d},
V_t=\{x:x\in U_t, \text{ $x$ is isolated}\}$\end{center}
where $x$ is isolated iff $T_n^{x,S_x}\cap(\{x\}\times[1,2])=\emptyset$ for all $S_x$ and $T_n^{y,S_y}\cap(\{y\}\times[1,2])=\emptyset$ for all $x\in S_y$. 
It makes sure that
 \bea\label{3}
|\eta_{2}^{ \varepsilon,\BT_d}\cap\tilde{\eta}^{ \varepsilon,\BT_d}_t\cap V_t|=\sum\limits_{x\in V_t}1_{\{T_n^{x,\varepsilon}\cap (\{x\}\times[1,2])=\emptyset\}}.
\eea

Since the events $\{x\in V_t\}_{x\in U_t}$  are 2-dependent,
\bea\label{670}
|V_t|\to\infty  \text{ in probability under $P^{\mu,\eta_A}$ on }\Omega_{\infty}.
\eea
Let 
\bea\label{4}
&g_x=1_{T_n^{x,\varepsilon}\cap (\{x\}\times[1,2])=\emptyset},
\\&h=1-\{|\eta_{2}^{ \varepsilon,\BT_d}\cap\tilde{\eta}^{ \varepsilon,\BT_d}_t\cap V^c_t| \text{ mod 2}\},
\\&\CG_t=\sigma(\tilde{\eta}_t^{ \varepsilon,\BT_d}, \eta_1^{ \varepsilon,\BT_d},V_t,h).
\eea
Since given $ \CG_t$, $h$ is constant and $\{g_x, x\in V_t\}$ are identically independent and independent of $V_t$, by 
\eqref{3}, \eqref{670} and \eqref{4}, Lemma 2.3 of \cite{bramson1991annihilating},
\bea\label{24}
|P^{(\mu,\eta_A)}( \eta_{2}^{ \varepsilon,\BT_d}\cap\tilde{\eta}^{ \varepsilon,\BT_d}_t\text{ is odd } |\CG_t)-\frac{1}{2}|=|P^{(\mu,\eta_A)}(\sum\limits_{x\in V_t}g_x= h\text{ mod 2}|\CG_t)-\frac{1}{2}|\leq|1-2e^{-\varepsilon}|^{|V_t|}\to 0
\eea in probability on $\Omega_{\infty}$. The interpretation is that when $|V_t|$ is large enough, with half probability, we can find odd number of vertices in $V_t$ whose states do not change in the unit time. 
  
  Taking expectation and by bounded convergence theorem,
  
  \begin{align*}P^\mu( \eta_{2+t}\cap\eta_A\text{ is odd } )=P^{(\mu,\eta_A)}( \eta_{2}^{ \varepsilon,\BT_d}\cap\tilde{\eta}^{ \varepsilon,\BT_d}_t\text{ is odd } )\to\frac{1}{2}\tilde P^{\eta_A}(\Omega_{\infty}),
 \end{align*}
 so that \eqref{6_1} is obtained and the proof is complete.
 \end{proof}

{\bf Proof of Theorem \ref{thm62}:}
\begin{proof}
When $ \varepsilon\ge \varepsilon_1(d)$ we can get the result directly.

 When $ \varepsilon_1(d)>\varepsilon\ge \varepsilon_2(d)$ we can get the result by Proposition \ref{lem7}.

Therefore we only need to prove the result when $ \varepsilon< \varepsilon_2(d)$, by Lemma \ref{corollary_equivalent} and Remark \ref{rmk62}, $\inf_tP^o( \eta_{t}^{ \varepsilon,\BT_d}(o)=1)>0$, so that the dual process $\tilde \eta_{t}^{ \varepsilon,\BT_d}$ survive strongly.

Define the stoping times
\bea
&\tau_{B(n)}=\inf\{t\ge0:B(n)\subset  \eta^{ \varepsilon,\BT_d}_t\},
\\&\tilde{\tau}_{B(n)}=\inf\{t\ge0:B(n)\subset \tilde{ \eta}^{ \varepsilon,\BT_d}_t\}.
\eea
 By Levy's 0-1 Law, we can prove that $\{ o\in\eta_{t}^{ \varepsilon,\BT_d}i.o.\}\subseteq{\Omega}_A$ and $\{o\in\tilde \eta_{t}^{ \varepsilon,\BT_d}i.o.\}\subseteq\tilde{\Omega}_A$ a.s., which implies that for any finite $B$,
\bea\label{650}
&P^{ \eta_A}( \eta_{t}^{ \varepsilon,\BT_d}\cap \eta_B\text{  is odd},\Omega^c_{B(n)})\to 0,
\\&\tilde{P}^{ \eta_{A}}(\tilde{ \eta}^{ \varepsilon,\BT_d}_{t}\cap  \eta_{B}\text{  is odd},\tilde{\Omega}_{B(n)}^c)\to 0
\eea where $\Omega_A=\{\tau_{A}<\infty\}$ and
$\tilde{\Omega}_A=\{\tilde{\tau}_{A}<\infty\}$.

For any $A$, finite $B$, 
\bea\label{633}
&\lim\limits_{t\to\infty}P^{ \eta_A}( \eta_{t+1}^{ \varepsilon,\BT_d}\cap \eta_B\text{  is odd})
\\&=\lim\limits_{t\to\infty}P^{ \eta_A}( \eta_{t+1}^{ \varepsilon,\BT_d}\cap \eta_B\text{  is odd},\Omega_{B(n)})\quad\text{by \eqref{650}}
\\&=\lim\limits_{t\to\infty}E^{ \eta_A}[P^{ \eta^{ \varepsilon,\BT_d}_{\tau_{B(n)}}}[ \eta^{ \varepsilon,\BT_d}_{t+1-\tau_{B(n)}}\cap \eta_B\text{  is odd}],\Omega_{B(n)}]\quad\text{by strong Markov property}
\\&=\lim\limits_{t\to\infty}E^{ \eta_A}[\tilde{P}^{ \eta_B}[\tilde{ \eta}^{ \varepsilon,\BT_d}_{t+1-\tau_{B(n)}}\cap  \eta^{ \varepsilon,\BT_d}_{\tau_{B(n)}}\text{  is odd}],\Omega_{B(n)}]\quad\text{by \eqref{2_cd}}
\\&=\lim\limits_{t\to\infty}E^{ \eta_A}[\tilde{P}^{ \eta_B}[\tilde{ \eta}^{ \varepsilon,\BT_d}_{t+1-\tau_{B(n)}}\cap  \eta^{ \varepsilon,\BT_d}_{\tau_{B(n)}}\text{  is odd},\tilde{\Omega}_{B(n)}],\Omega_{B(n)}]\quad\text{by \eqref{650} }
\\&=\lim\limits_{t\to\infty}E^{ \eta_A}[\tilde{E}^{ \eta_B}[\tilde{P}^{\tilde{ \eta}^{ \varepsilon,\BT_d}_{\tilde{\tau}_{B(n)}}}[\tilde{ \eta}^{ \varepsilon,\BT_d}_{t+1-\tau_{B(n)}-\tilde{\tau}_{B(n)}}\cap  \eta^{ \varepsilon,\BT_d}_{\tau_{B(n)}}\text{  is odd}],\tilde{\Omega}_{B(n)}],\Omega_{B(n)}]\quad\text{by strong Markov property}
\\&=\lim\limits_{t\to\infty}E^{ \eta_A}[\tilde{E}^{ \eta_B}[P^{ \eta^{ \varepsilon,\BT_d}_{\tau_{B(n)}}}[{ \eta}^{ \varepsilon,\BT_d}_{t+1-\tau_{B(n)}-\tilde{\tau}_{B(n)}}\cap\tilde{ \eta}^{ \varepsilon,\BT_d}_{\tilde{\tau}_{B(n)}} \text{  is odd}],\tilde{\Omega}_{B(n)}],\Omega_{B(n)}] \quad\text{by \eqref{2_cd}}.
\eea

Fix $ \eta^{ \varepsilon,\BT_d}_{\tau_{B(n)}}$, $\tilde{ \eta}^{ \varepsilon,\BT_d}_{\tilde{\tau}_{B(n)}}$, $\tau_{B(n)}$ and $\tilde\tau_{B(n)}$, when $t>\tau_{B(n)}+\tilde{\tau}_{B(n)}$,
\bea\label{625}
&|P^{ \eta^{ \varepsilon,\BT_d}_{\tau_{B(n)}}}( \eta^{ \varepsilon,\BT_d}_{t+1-\tau_{B(n)}-\tilde{\tau}_{B(n)}}\cap \tilde{ \eta}^{ \varepsilon,\BT_d}_{\tilde{\tau}_{B(n)}}\text{  is odd})-\frac{1}{2}|
\\&\le|P^{ \eta^{ \varepsilon,\BT_d}_{\tau_{B(n)}}}(| \eta^{ \varepsilon,\BT_d}_{t-\tau_{B(n)}-\tilde{\tau}_{B(n)}}\cap \tilde{ \eta}^{ \varepsilon,\BT_d}_{\tilde{\tau}_{B(n)}}|\ge k)P^{ \eta^{ \varepsilon,\BT_d}_{\tau_{B(n)}}}( \eta^{ \varepsilon,\BT_d}_{t+1-\tau_{B(n)}-\tilde{\tau}_{B(n)}}\cap \tilde{ \eta}^{ \varepsilon,\BT_d}_{\tilde{\tau}_{B(n)}}\text{  is odd}|| \eta^{ \varepsilon,\BT_d}_{t-\tau_{B(n)}-\tilde{\tau}_{B(n)}}\cap \tilde{ \eta}^{ \varepsilon,\BT_d}_{\tilde{\tau}_{B(n)}}|\ge k)-\frac{1}{2}|
\\&+\frac{3}{2}P^{ \eta^{ \varepsilon,\BT_d}_{\tau_{B(n)}}}(| \eta^{ \varepsilon,\BT_d}_{t-\tau_{B(n)}-\tilde{\tau}_{B(n)}}\cap \tilde{ \eta}^{ \varepsilon,\BT_d}_{\tilde{\tau}_{B(n)}}|\le k)
\\&\le |P^{ \eta^{ \varepsilon,\BT_d}_{\tau_{B(n)}}}( \eta^{ \varepsilon,\BT_d}_{t+1-\tau_{B(n)}-\tilde{\tau}_{B(n)}}\cap \tilde{ \eta}^{ \varepsilon,\BT_d}_{\tilde{\tau}_{B(n)}}\text{  is odd}|| \eta^{ \varepsilon,\BT_d}_{t-\tau_{B(n)}-\tilde{\tau}_{B(n)}}\cap \tilde{ \eta}^{ \varepsilon,\BT_d}_{\tilde{\tau}_{B(n)}}|\ge k)-1|
\\&+\frac{3}{2}P^{ \eta^{ \varepsilon,\BT_d}_{\tau_{B(n)}}}(| \eta^{ \varepsilon,\BT_d}_{t-\tilde{\tau}_{B(n)}-\tau_{B(n)}}\cap B(n)|\le k)
\eea
for any $n,k$.

Let $U_t= \eta^{ \varepsilon,\BT_d}_{t-\tau_{B(n)}-\tilde{\tau}_{B(n)}}\cap \tilde{ \eta}^{ \varepsilon,\BT_d}_{\tilde{\tau}_{B(n)}}$ and $V_t$ be the corresponding variable, by  \eqref{24}, for any $\epsilon>0$,
\bea\label{627}
&|P^{ \eta^{ \varepsilon,\BT_d}_{\tau_{B(n)}}}( \eta^{ \varepsilon,\BT_d}_{t+1-\tau_{B(n)}-\tilde{\tau}_{B(n)}}\cap \tilde{ \eta}^{ \varepsilon,\BT_d}_{\tilde{\tau}_{B(n)}}\text{  is odd}|| \eta^{ \varepsilon,\BT_d}_{t-\tau_{B(n)}-\tilde{\tau}_{B(n)}}\cap \tilde{ \eta}^{ \varepsilon,\BT_d}_{\tilde{\tau}_{B(n)}}|\ge k)-\frac{1}{2}|
<\epsilon/2
\eea  uniformly in $t,n$ when $k$ is large enough.

By Lemma \ref{lem53} and Remark \ref{rmk62}, for any $k$,
\bea\label{626}
P^{ \eta_{B(n)}}(| \eta^{ \varepsilon,\BT_d}_{t-\tau_{B(n)}-\tilde{\tau}_{B(n)}}\cap B(n)|\le k)<\epsilon/2
\eea  uniformly in $t>\tau_{B(n)}-\tilde{\tau}_{B(n)}$ when $n$ is large enough.

Thus for any $\epsilon>0$, we can find large enough $t_0,k_0$ such that for any $t\ge t_0,n\ge n_0$,

$$
|P^{ \eta_A}( \eta_{t+1}^{ \varepsilon,\BT_d}\cap \eta_B\text{  is odd})-\frac{1}{2}|<\epsilon.
$$

Let $\epsilon\to 0$, we can get
\bea\label{634}
\lim\limits_{t\to\infty}P^{ \eta_A}( \eta_{t+1}^{ \varepsilon,\BT_d}\cap \eta_B\text{  is odd})=\frac{1}{2}\lim\limits_{n\to\infty}P^{ \eta_A}(\Omega_{B(n)})\tilde{P}^{ \eta_B}(\tilde{\Omega}_{B(n)}).
\eea
 
Since $\eta_t^{ \varepsilon,\BT_d}$ is attractive while $\tilde\eta_t^{ \varepsilon,\BT_d}$ is not, applying the proof of 
\eqref{535}, we can only get
$$
\lim\limits_{n\to\infty}P^{ \eta_A}(\Omega_{B(n)})=P^{ \eta_A}(o\in \eta_t^{ \varepsilon,\BT_d}\text{ i.o.}).
$$
It follows that for any initial configuration $\eta$, 
\bea\label{8}
\lim\limits_{t\to\infty}P^{\eta}(\eta_{t+1}^{ \varepsilon,\BT_d}\cap\eta_B\text{ is odd})=\frac{1}{2}\lim\limits_{n\to\infty}\tilde{P}^{\eta_B}(\tilde{\Omega}_{B(n)})P^{\eta}(o\in\eta_t^{ \varepsilon,\BT_d}\text{ i.o.}),\eea
while by Proposition \ref{lem7}, for all dense configuration $\eta$,

\bea\label{69}
P^\eta(\eta^{ \varepsilon,\BT_d}_{t}\cap\eta_B\text{ is odd})\to\frac{1}{2}\tilde P^{\eta_B}(\Omega_\infty).
\eea
Thus by \eqref{8} and \eqref{69},
\bea\label{30}
\lim\limits_{n\to\infty}\tilde{P}^{\eta_B}(\tilde{\Omega}_{B(n)})=\tilde P^{\eta_B}({\Omega}_{\infty}).
\eea
Then by \eqref{8}, \eqref{30}, and the equivalence of the survival and strong survival when $\eta_t^{ \varepsilon,\BT_d}$ survives strongly,
$$
P^{\eta_A}(\eta^{ \varepsilon,\BT_d}_{t}\cap\eta_B\text{ is odd})\to\frac{1}{2}P^{\eta_A}(o\in\eta_t^{ \varepsilon,\BT_d}\text{ i.o.})\tilde P^{\eta_B}(\Omega_\infty)=\frac{1}{2} P^{\eta_A}(\eta_t^{ \varepsilon,\BT_d}\ne\emptyset,\forall t)\tilde P^{\eta_B}(\Omega_\infty).
$$
By Proposition \ref{lem7},
$$
P^{\eta_A}(\eta^{ \varepsilon,\BT_d}_{t}\cap\eta_B\text{ is odd})\to P^{\eta_A}(\eta_t^{ \varepsilon,\BT_d}\ne\emptyset,\forall t)\bar\nu^\varepsilon(\eta:\eta\cap\eta_B\text{ is odd}).
$$
\end{proof}

% !TEX root = TVM_0516.tex
%%%%%%%%%%%%%%%%%%%%%%%%%%%%%%%%%%%%%%%%%%%%%%%%%%%%%%%%%%%%%%%%%%%%%%%%%%%%%%%%%%%
\section{The threshold-one voter model }
\label{section_7}

In this section, we consider the threshold-one voter model $\zeta^{\BT_d}_{t}$ on $\BT_d$, which has no spontaneous death. Our main task is to prove the complete convergence theorem when it survives strongly, while otherwise, only if $\beta(d)< \frac{1}{\sqrt{d}}$ we can prove that starting from any translation invariant measure $\mu$ or Dirac measure concentrating on the doubly dense configuration, the process converges to $\mu_{1/2}$. First of all, we need to prove the following lemmas.

\begin{lemma}\label{lem71}
If $\beta(d)< \frac{1}{\sqrt{d}}$,
\bea
\tilde P^o(o\in\tilde{\zeta}_t^{\BT_d}\text{ i.o.})=0.
\eea
\end{lemma}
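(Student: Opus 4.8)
The plan is to transfer the problem to the reachability exponent of the dual and then run a ``there--and--back'' first moment estimate. First I would show that the reachability exponent of the annihilating dual agrees with that of the forward process: writing $\tilde a_n=\tilde P^o(x_n\in\tilde\zeta^{\BT_d}_t\text{ for some }t)$, I claim
$$
\lim_{n\to\infty}\tilde a_n^{1/n}=\beta(d).
$$
This should follow directly from the graphical construction \eqref{2} of the annihilating dual (with $\delta=0$): reversing the arrows and time identifies the dual event $\{x_n\in\tilde\zeta^{\BT_d}_{t,0}\}$ with the forward event $\{o\in\zeta^{\BT_d}_t\}$ under $P^{x_n}$ path by path, so that, invoking \eqref{cd} together with the translation invariance of $\BT_d$, one obtains $\tilde a_n=P^{x_n}(o\in\zeta^{\BT_d}_t\text{ for some }t)=P^o(x_n\in\zeta^{\BT_d}_t\text{ for some }t)$ for every $n$. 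Consequently the hypothesis $\beta(d)<1/\sqrt d$ supplies, for any fixed $\rho$ with $\beta(d)<\rho<1/\sqrt d$, a constant $C$ so that $\tilde a_n\le C\rho^{\,n}$ for all large $n$.

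Next I would estimate the probability that $o$ is occupied by the dual at arbitrarily large times. Since $\{o\in\tilde\zeta^{\BT_d}_t\text{ i.o.}\}=\bigcap_T\{o\in\tilde\zeta^{\BT_d}_t\text{ for some }t\ge T\}$, it suffices to show that these tail probabilities tend to $0$. Each occupation of $o$ at a late time forces an excursion that travels out to some vertex $x$ and returns to $o$; decomposing according to the farthest vertex $x$ reached before the return, and applying the strong Markov property at the first time this vertex is hit, the ``out'' and the ``back'' legs are each controlled by the reachability bound $\tilde a_{|x|}$ from the first step. Summing over the $\Theta(d^{m})$ vertices at distance $m\ge n$ this produces a bound of the order
$$
\sum_{m\ge n}d^{m}\,\tilde a_m^{2}\le C^2\sum_{m\ge n}d^{m}\rho^{2m}=C^2\sum_{m\ge n}(d\rho^{2})^{m}.
$$
Because $\rho<1/\sqrt d$ forces $d\rho^{2}<1$, this geometric tail vanishes as $n\to\infty$, which yields $\tilde P^o(o\in\tilde\zeta^{\BT_d}_t\text{ i.o.})=0$.

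The hard part will be making the return decomposition rigorous, since, unlike the threshold-one contact process, the annihilating dual $\tilde\zeta^{\BT_d}_t$ is neither additive nor monotone: at the time a far vertex $x$ is first hit the configuration may carry many particles, and one cannot naively bound the probability of subsequently reaching $o$ by the single-site reachability $\tilde a_{|x|}$, nor union-bound over particles. I expect to circumvent this exactly as in the proof of Lemma \ref{le64}, by working with the dual restricted to the disjoint subtrees $\BT^d_x$, which are genuinely independent and for which the product estimate $\tilde a_{|x|}^2$ is legitimate. A secondary point to settle is to rule out the possibility that the dual returns to $o$ infinitely often while staying confined to a bounded ball; this is handled by observing that each approach to the boundary of the ball carries a uniformly positive probability of escape, so confinement forever has probability zero and every excursion contributing to the i.o.\ event does reach unboundedly large distances, as the estimate above requires.
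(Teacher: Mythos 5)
Your route is genuinely different from the paper's, and it contains two gaps that I do not see how to close. The paper does not argue through spatial excursions at all: it first establishes the temporal estimate $\lim_{t\to\infty}[P^o(o\in\zeta^{\BT_d}_t)]^{1/t}=\gamma<1$ (the analogue of Theorem 2 of Lalley's growth-profile paper, which is where the hypothesis $\beta(d)<1/\sqrt d$ enters), notes that this controls $\tilde P^o(o\in\tilde\zeta^{\BT_d}_t)$ at each fixed time by the parity duality \eqref{cd} applied to singletons, and then upgrades from fixed times to all times by partitioning $[s^2,(s+1)^2)$ into $s^4$ subintervals: if $o$ is occupied at some time inside a subinterval but at neither endpoint of it, then at least two Poisson marks affecting $o$ must fall in that short subinterval, an event of total probability $O(s^{-2})$; Borel--Cantelli over $s$ finishes. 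Your argument instead tries to deduce the conclusion from a first-moment ``out-and-back'' count, which is the right heuristic for why $d\beta^2<1$ is the relevant threshold but is not a proof.

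The first gap is the reduction of $\{o\in\tilde\zeta^{\BT_d}_t\text{ for some }t\ge T\}$ to an excursion reaching distance $\ge n$ and returning. Occupation of $o$ at a late time does not force the dual to have visited a far vertex, and your ``secondary point'' about escaping bounded balls does not supply the missing quantitative link: for the sum $\sum_{m\ge n}d^m\tilde a_m^2$ to control the tail event you would need to show that, on the event that $o$ is occupied at time $t$, the maximal displacement of the cluster grows with $t$ at a definite rate, and that is essentially the content of Lalley's growth-profile analysis rather than a consequence of a union bound. The second gap is the factor $\tilde a_m^2$ itself: as you acknowledge, the annihilating dual is neither additive nor monotone, so at the hitting time of a far vertex $x$ you cannot bound the probability of subsequently reaching $o$ by the single-particle quantity $\tilde a_{|x|}$, and your proposed fix --- restricting to disjoint subtrees as in Lemma \ref{le64} --- only yields lower bounds (one restricts a monotone additive process to obtain a dominated subprocess); it gives no upper bound on the unrestricted, non-monotone dual, which is what the ``back'' leg requires. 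A smaller issue is that the identity $\tilde a_n=P^o(x_n\in\zeta^{\BT_d}_t\text{ for some }t)$ does not follow ``path by path'' from \eqref{cd}: the time-reversal identification holds at a fixed horizon $t$, while the two ``for some $t$'' events are unions with different joint structure, so equating their probabilities would need an argument in the spirit of Lemma \ref{le63}.
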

\begin{proof}

Similar to the corresponding Theorem 2 in \cite{lalley1999growth} for the ordinary contact process on $\BT_d$, we can also prove that
\bea\label{74}
[\lim\limits_{t\to\infty}P^o(o\in\zeta^{\BT_d}_{t})]^{1/t}=\gamma<1.
\eea
Next we will use the techniques in the proof of Theorem 1.2 of \cite{louidor2014williams}, such that for any $s\in \BN$, let $s^2=t_0<t_1<\cdots<t_{s^4}=(s+1)^2$ be a uniform partition of $[s^s,(s+1)^2)$, then
\bea\label{77}
&\tilde P^o(o\in\tilde{\zeta}^{\BT_d}_t\text{ for some }t\in[s^2,(s+1)^2))
\\&\le \tilde P^o(o\in\tilde{\zeta}^{\BT_d}_{t_k}\text{ for some }0\le k\le s^4)+P(\exists 0\le k\le s^4, |(\cup_{S_o}T_n^{o,S_o})\cap[t_k,t_{k+1})|\ge 2).
\eea
For the first term, by \eqref{74}, we have 
\bea\label{75}
\tilde P^o(o\in\tilde{\zeta}^{\BT_d}_{t_k}\text{ for some }0\le k\le s^4)\le s^4\sup_{s^2\le t\le (s+1)^2}\tilde P^o(o\in \tilde{\zeta}^{\BT_d}_{t})\le C\gamma^{s^2}.
\eea
For the second term, by the property of the Poisson distribution, we have
\bea\label{76}
P(\exists 0\le k\le s^4, |(\cup_{S_o}T_n^{o,S_o})\cap[t_k,t_{k+1})|\ge 2)
\le Cs^4(t_{k+1}-t_k)^2\le Cs^{-2}.
\eea
Substitute \eqref{75} and \eqref{76} in \eqref{77}, we have
$$
\sum\limits_{s\in\BN}\tilde P^o(o\in\tilde{\zeta}^{\BT_d}_t\text{ for some }t\in[s^2,(s+1)^2))<\infty.
$$
Thus by Borel-Cantelli Lemma we can get \eqref{lem71} .

\end{proof}
Denote the equivalent class of $\tilde{\zeta}^{\BT_d}_t$ as $[\tilde{\zeta}^{\BT_d}_t]$, where two configurations are equivalent if and only if one can be translated by the other. 

\begin{lemma}\label{lem72}
If $\beta(d)\ne\frac{1}{\sqrt{d}}$, $[\tilde{\zeta}^{o,\BT_d}_t]$ is not positive recurrent.
\end{lemma}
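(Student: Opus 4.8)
The plan is to prove the contrapositive: I will show that positive recurrence of the shape chain $[\tilde\zeta^{o,\BT_d}_t]$ forces the particle number to be tight, and then rule this out in every regime compatible with $\beta(d)\ne \frac{1}{\sqrt d}$. First note that $[\tilde\zeta^{o,\BT_d}_t]$ is a continuous-time Markov chain on the countable set of finite nonempty subsets of $\BT_d$ modulo translation (the rates depend only on the shape, by translation invariance of the graphical representation). If this chain were positive recurrent it would be irreducible and ergodic on its recurrent class, hence $[\tilde\zeta^{o,\BT_d}_t]$ would converge in distribution to its stationary law $\pi$; since $\pi$ is carried by finite configurations, $\lim_{k\to\infty}\liminf_{t\to\infty}P^o(|\tilde\zeta^{\BT_d}_t|\le k)=1$, i.e.\ $|\tilde\zeta^{\BT_d}_t|$ is tight. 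Moreover positive recurrence precludes absorption at $\textbf{0}$, so the process must survive. These two consequences, tightness and survival, are what I will contradict.

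Next I reduce to two cases using the earlier structure theory. By Lemma \ref{le64}, Lemma \ref{corollary_equivalent} and Remark \ref{rmk62} (applied to the death-free dual $\tilde\zeta^{\BT_d}_t$), the condition $\beta(d)>\frac{1}{\sqrt d}$ already forces strong survival, and in fact the argument of Lemma \ref{corollary_equivalent} then pins $\beta(d)=1$; thus the open interval $\big(\frac{1}{\sqrt d},1\big)$ is empty and the hypothesis $\beta(d)\ne\frac{1}{\sqrt d}$ splits cleanly into the case $\beta(d)=1$ (strong survival) and the case $\beta(d)<\frac{1}{\sqrt d}$ (local extinction, handled by Lemma \ref{lem71}).

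In the case $\beta(d)=1$ I would argue by growth. Strong survival together with the supercritical branching comparison built in the proof of Lemma \ref{le64} (where the embedded process $|B_j|$ grows geometrically on a positive-probability event) yields $P^o(|\tilde\zeta^{\BT_d}_t|\to\infty)>0$. Consequently $\lim_{k\to\infty}\liminf_{t\to\infty}P^o(|\tilde\zeta^{\BT_d}_t|\le k)\le 1-P^o(|\tilde\zeta^{\BT_d}_t|\to\infty)<1$, which directly contradicts the tightness forced by positive recurrence. Hence the shape chain is transient, not positive recurrent, in this case.

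The delicate case is $\beta(d)<\frac{1}{\sqrt d}$. Here Lemma \ref{lem71}, combined with bounded convergence, gives $P^o(o\in\tilde\zeta^{\BT_d}_t)\to 0$, and by translation invariance every fixed vertex is occupied only finitely often almost surely. The scenario that must still be excluded is a \emph{surviving cloud of tight (bounded) size that drifts to infinity}: for such a cloud every site is transient yet the shape could recur, so local extinction by itself does \emph{not} imply shape-transience. The plan is to re-center the conjectural stationary cloud at a tagged particle, turning it into a unimodular random configuration on $\BT_d$, and then use the mass-transport principle on the (transitive, unimodular) tree together with the $\ell^2$ spectral radius $\tfrac{2\sqrt d}{d+1}$ of simple random walk to show that such a stationary drifting cloud would reach a prescribed vertex at distance $n$ with probability of order $(\tfrac{1}{\sqrt d})^{n}$, forcing $\beta(d)\ge \frac{1}{\sqrt d}$ and contradicting the hypothesis. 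I expect this last step to be the main obstacle, precisely because extracting the sharp constant $\frac{1}{\sqrt d}$ (rather than the naive $\frac1d$ obtained by counting the $d^n$ directions at distance $n$) requires the square-root improvement coming from the tree's spectral geometry, exactly as in the contact-process analysis of \cite{Liggett_1996, liggett1999stochastic}; the estimate \eqref{74} and the reach-rate $\beta(d)$ will be the quantitative inputs.
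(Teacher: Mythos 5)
Your reduction of positive recurrence to tightness of the shape chain is fine, and your warning that local extinction alone does not rule out a ``tight drifting cloud'' is exactly the right thing to worry about. However, both branches of your case analysis have genuine gaps. In the case $\beta(d)>\frac{1}{\sqrt d}$, the step $P^o(|\tilde{\zeta}^{\BT_d}_t|\to\infty)>0$ is not justified: the branching comparison in the proof of Lemma \ref{le64} builds $B_j$ by restarting \emph{independent copies in disjoint subtrees and taking unions}, which requires additivity/attractiveness. That is available for the threshold contact process and for the forward voter model (Remark \ref{rmk62}), but the annihilating dual $\tilde{\zeta}^{\BT_d}_t$ is not attractive --- colliding particles annihilate, so more particles now need not mean more particles later --- and the domination you invoke fails. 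Worse, ``$|\tilde{\zeta}_t^{\BT_d}|\to\infty$ given survival'' is essentially Lemma \ref{lem73}, which the paper deduces \emph{from} Lemma \ref{lem72}; assuming it here is circular. The paper's route in this case avoids cardinality growth entirely: positive recurrence of the shape chain makes the diameter tight, while the un-centered chain on finite subsets of $\BT_d$ cannot be positive recurrent, so $\tilde P^o(\tilde\zeta^{\BT_d}_t\subseteq B_k(o))\to 0$; combining the two gives $\tilde P^o(o\in\tilde{\zeta}^{\BT_d}_t)\to 0$, contradicting Lemma \ref{le64} and Remark \ref{rmk62}.

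In the case $\beta(d)<\frac{1}{\sqrt d}$ you have identified the correct difficulty but not resolved it: the mass-transport/spectral-radius argument is only a program, and you yourself flag it as the main obstacle, so the lemma is not proved in this regime. The paper's actual argument is much more elementary and you should be able to reconstruct it. Starting from the singleton, with positive probability after one unit of time the dual equals a configuration $\zeta$ consisting of three translated copies of an arbitrary odd finite set $A$, placed in three of the components $\BT_i$ of $\BT_d\setminus\{o\}$ so that $o\notin\bar A_i$. These copies evolve independently until one of them reaches $o$, each preserves odd parity and hence never dies, and if all three avoid $o$ forever the configuration can never re-collapse to a singleton; thus $\tilde P^o(\tau_*=\infty)\ge \tilde P^o(\tilde\zeta_1=\zeta)\,[\tilde P^A(\tilde\tau_o=\infty)]^3$. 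Positive recurrence forces the left side to vanish, hence $\tilde P^A(\tilde\tau_o<\infty)=1$ for every odd finite $A$ (the case $o\in\bar A$ is handled by restricting to an odd piece $A\cap\BT_i$), and iterating gives $\tilde P^o(o\in\tilde{\zeta}^{\BT_d}_t\text{ i.o.})=1$, contradicting Lemma \ref{lem71}. No spectral estimate is needed; the constant $\frac{1}{\sqrt d}$ enters only through Lemmas \ref{lem71} and \ref{le64}, which are taken as inputs.
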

\begin{proof}
Now we assume that $[\tilde{\zeta}^{\BT_d}_t]$ is positive recurrent, let
$$
\tau_{*}=\inf\{t:[\tilde{\zeta}^{\BT_d}_t]=[\{o\}]\}, \tau_{o}=\inf\{t:o\in{\zeta}^{\BT_d}_t\},\tilde\tau_{o}=\inf\{t:o\in\tilde{\zeta}^{\BT_d}_t\}
$$
 so that we have $\tilde P^o(\tau_{*}<\infty)=1$. 

Firstly when $\beta(d)<\frac{1}{\sqrt{d}}$, we claim that for any $A$ with finite odd number of vertices,
\bea\label{80}
 \tilde P^A(\tilde{\tau}_{o}<\infty)=1.
\eea

Define $\CN_o=\{x_1,\dots,x_{d+1}\}$, and $\BT_i,1\le i\le d+1$ as the connected component of $\BT_d\backslash\{o\}$ containing $x_i$. 

When $o\not\in\bar A$ where $\bar A$ is defined in Section \ref{section_3}, let $\zeta$ satisfy that $\zeta\cap\zeta_{\BT_i}=\zeta_{A_i}$, where $[A_i]=[A]$ for any $1\le i\le 3$ and $A_i=\emptyset$ otherwise, then by the independence of $\eta^{\zeta_{A_i},\BT_d}_{t\land\tau_{o}},1\le i\le d+1$, we have 
\bea\label{prod}
P^\zeta(\tilde\tau_{o}=\infty)=\prod_{i=1}^{d+1}P^{\zeta_{A_i}}(\tilde\tau_{o}=\infty)
\eea
so that by Markov property,
\bea\label{79}
 \tilde P^o(\tau_{*}=\infty)\ge\tilde P^o(\tilde{\zeta}_1=\zeta)[\tilde P^A(\tilde{\tau}_{o}=\infty)]^3.
 \eea
 Since $\tilde P^o(\tilde{\zeta}_1=\zeta)>0$ by irreducibility, it follows from the assumption that
  \bea\label{nin}
\tilde P^A(\tilde{\tau}_{o}=\infty)=0.
\eea 
 
When $o\in\bar{A}$, let $A_{odd}=A\cap\BT_i$ for some $i$ with odd number of vertices, by \eqref{prod} and \eqref{nin}, 
\bea\label{78}
\tilde P^{\zeta_A}(\tilde{\tau}_{o}=\infty)\le \tilde P^{\zeta_{A_{odd}}}(\tilde{\tau}_{o}=\infty)=0
\eea
Thus from \eqref{nin} and \eqref{78}, we can get \eqref{80}.

But if the claim \eqref{80} is true, 
$$
\tilde P^o(o\in\tilde{\zeta}^{\BT_d}_t\text{ i.o.})=1,
$$
which is impossible by Lemma \ref{lem71}, so that the assumption is false.

Next when $\beta(d)>\frac{1}{\sqrt{d}}$, we will use the method of proving Lemma 2.2 of \cite{handjani1999complete} to get the result.
By the assumption,
\bea\label{81}
\lim\limits_{k\to\infty}\lim\limits_{t\to\infty}\tilde P^o(diam([\tilde{\zeta}^{\BT_d}_t])\ge k)= 0.
\eea

Since $\tilde{\zeta}^{\BT_d}_t$ is not positive recurrent, 
\bea\label{82}
\lim\limits_{k\to\infty}\lim\limits_{t\to\infty}\tilde P^o(\tilde{\zeta}^{\BT_d}_t\subseteq B_k(o))= 0.
\eea

Then it follows from \eqref{81} and \eqref{82} that
$$
\lim\limits_{t\to\infty}\tilde P^o(o\in\tilde{\zeta}^{\BT_d}_t)\le \lim\limits_{k\to\infty}\lim\limits_{t\to\infty}\tilde P^o(\tilde{\zeta}^{\BT_d}_t\subseteq B_k(o))+\lim\limits_{k\to\infty}\lim\limits_{t\to\infty}\tilde P^o(diam([\tilde{\zeta}^{\BT_d}_t])\ge k)= 0,
$$

which is impossible by Remark \ref{rmk62} and Lemma \ref{le64}. So that the assumption is also false. Now we have proved that $[\tilde{\zeta}^{\BT_d}_t]$ is not positive recurrent for any $d$ satisfying $\beta(d)\ne\frac{1}{\sqrt{d}}$.
\end{proof}

\begin{lemma}\label{lem73}
If $\beta(d)\ne\frac{1}{\sqrt{d}}$, for any $k>0$, finite $A$,
$$
\tilde P^{\zeta_A}(|\tilde{\zeta}^{\BT_d}_t|=k|\tilde\zeta_t^{\BT_d}\ne\textbf{0},\forall t)\to 0.
$$
\end{lemma}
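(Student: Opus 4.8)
The plan is to prove the equivalent absolute statement that, writing $\Omega_\infty=\{\tilde\zeta^{\BT_d}_s\ne\textbf{0}\text{ for all }s\}$, one has $\tilde P^{\zeta_A}(|\tilde\zeta^{\BT_d}_t|=k,\Omega_\infty)\to 0$ for every fixed $k\ge 1$ and every finite $A$; dividing by $\tilde P^{\zeta_A}(\Omega_\infty)$, which we may assume positive since the statement is vacuous otherwise, then yields the lemma. Equivalently, I will show that conditioned on survival the number of dual particles tends to $\infty$ in probability. The main engine is Lemma \ref{lem72}: since $\beta(d)\ne 1/\sqrt d$, the equivalence-class chain $[\tilde\zeta^{\BT_d}_t]$ is irreducible and not positive recurrent, hence, being a non-explosive continuous-time chain, satisfies $\tilde P^{\zeta_A}([\tilde\zeta^{\BT_d}_t]=\sigma)\to 0$ as $t\to\infty$ for every fixed class $\sigma$.

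First I would dispose of the configurations confined to a bounded scale. For each fixed $R$ there are only finitely many equivalence classes of configurations with $k$ particles and diameter at most $R$, so summing the pointwise limit above over this finite family gives
\[
\tilde P^{\zeta_A}\!\left(|\tilde\zeta^{\BT_d}_t|=k,\ \mathrm{diam}(\tilde\zeta^{\BT_d}_t)\le R\right)\xrightarrow[t\to\infty]{}0 .
\]
Thus it remains to bound, uniformly in $t$ and then let $R\to\infty$, the contribution of the configurations with exactly $k$ particles spread over diameter larger than $R$; these form infinitely many classes and cannot be absorbed into the finite-class estimate.

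To control the large-diameter part I would induct on $k$. The case $k=1$ is immediate, since there is a unique class $[\{o\}]$ with one particle and the displayed pointwise limit applies directly. For the inductive step I condition at time $t$, writing the remaining contribution as $E^{\zeta_A}[\,\ind_{|\tilde\zeta_t|=k,\ \mathrm{diam}>R}\,h(\tilde\zeta_t)\,]$ with $h(\sigma)=\tilde P^{\sigma}(\Omega_\infty)$, and observe that a $k$-particle configuration of diameter exceeding $R$ can be cut along an edge into two groups lying in disjoint subtrees separated by distance at least $R$, each carrying strictly fewer than $k$ particles. Because disjoint subtrees evolve independently until some particle crosses the cut, the survival of $\tilde\zeta^{\BT_d}$ from such a split configuration is, up to the first crossing, governed independently by the two groups; estimating the probability of a crossing within a suitable window and applying the inductive hypothesis to each group, a configuration with at most $k-1$ particles, forces this contribution to $0$ as well and closes the induction.

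The step I expect to be the main obstacle is exactly this treatment of the spread-out configurations. Unlike the additive contact process $\xi^{\lambda,\BT_d}_t$, the annihilating dual $\tilde\zeta^{\BT_d}_t$ is not monotone, so the stochastic-domination and subtree-additivity arguments behind Lemma \ref{lem53} are unavailable, and the infinitely many large-diameter $k$-particle classes genuinely escape the transition-probability estimate. Making the ``independent far-apart groups'' reduction precise, by quantifying the error from treating the two sides of the cut as independent through the small probability that a particle bridges a cut of width $R$ during the relevant time window, is the crux on which the induction on the number of particles rests.
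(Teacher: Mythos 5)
Your argument is essentially the paper's: the $k=1$ case is handled identically via the non-positive-recurrence of $[\tilde\zeta^{\BT_d}_t]$ from Lemma \ref{lem72}, and for $k\ge 2$ the paper simply asserts that Proposition 2.6 of \cite{handjani1999complete} carries over, whose proof is exactly the dichotomy you describe (finitely many bounded-diameter classes killed by the vanishing transition probabilities of a non-positive-recurrent chain, and spread-out configurations split into far-apart independently evolving groups treated by induction on $k$). You have in effect reconstructed the content of that citation, including correctly identifying the independence-plus-induction step for spread-out configurations as the only point requiring real work.
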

\begin{proof}
For $k=1$, by Lemma \ref{lem72}, the process is not positive recurrent, so that
$$
\tilde P^{\zeta_A}(|\tilde{\zeta}^{\BT_d}_t|=1|\tilde\zeta_t^{\BT_d}\ne\textbf{0},\forall t)=\tilde P^{\zeta_A}(\tilde{\zeta}^{\BT_d}_t=[\{o\}]|\tilde\zeta_t^{\BT_d}\ne\textbf{0},\forall t)\to 0.
$$
 For $k\ge 2$, it is easy to verify that  Proposition 2.6 in \cite{handjani1999complete} also holds for the threshold-one voter model on $\BT_d$.  

\end{proof}

Since ${\zeta}^{\BT_d}_t$ has no independent death rate, the method of the proof of Proposition \ref{lem57} and \ref{lem7} is no longer applicable. However, inspired by Proposition 3.2 in \cite{handjani1999complete}, we can still prove the following proposition when $\beta(d)\ne\frac{1}{\sqrt{d}}$.

\begin{proposition} \label{prop71}
If $\beta(d)\ne\frac{1}{\sqrt{d}}$, for any initial distribution $\mu$ satisfying $\mu(\zeta=\textbf{0}\text{ or }\textbf{1})=0$, 
$\mu$ is translation invariant or $\mu=\delta_\zeta$ where $\zeta$ is doubly dense defined in \eqref{dense},
we have
$$
\lim_{t\to\infty}\delta_{\zeta} U(t)= \mu_{1/2}^d,
$$
where $U(t)$ is the Markov semigroup of ${\zeta}^{\BT_d}_t$ and $\mu_{1/2}^d$ is the limiting distribution of the product measure with parameter 1/2.
\end{proposition}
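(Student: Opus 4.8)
The plan is to reduce the statement to the convergence of parities and then to run the argument of Proposition \ref{lem7} essentially verbatim, the one genuinely new point being the source of randomness that forces the parity to equilibrate. Since the functions $\zeta\mapsto\ind_{\{\zeta\cap\zeta_B\text{ is odd}\}}$, with $B$ ranging over nonempty finite sets, determine weak convergence on $\{0,1\}^{\BT_d}$, it suffices to prove that for every such $B$,
$$
P^\mu(\zeta_t^{\BT_d}\cap\zeta_B\text{ is odd})\longrightarrow \tfrac12\,\tilde P^{\zeta_B}(\Omega_\infty),\qquad \Omega_\infty=\{\tilde\zeta_t^{\BT_d}\ne\textbf{0}\ \forall t\}.
$$
First I would verify that the right-hand side equals $\mu_{1/2}^d(\zeta\cap\zeta_B\text{ is odd})$: starting the primal from the product measure $\nu_{1/2}$ and applying the annihilating duality \eqref{cd} with $\delta=0$ gives $P^{\nu_{1/2}}(\zeta_t^{\BT_d}\cap\zeta_B\text{ is odd})=\tfrac12\tilde P^{\zeta_B}(\tilde\zeta_t^{\BT_d}\ne\textbf{0})\to\tfrac12\tilde P^{\zeta_B}(\Omega_\infty)$, because conditioning on the dual path the initial product parity over any nonempty set is exactly $\tfrac12$ when the dual is alive and $0$ when it is dead. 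This also explains why only the factor $\tfrac12\tilde P^{\zeta_B}(\Omega_\infty)$ survives: off $\Omega_\infty$ the intersection is eventually empty and the parity is $0$.

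Next, exactly as in Proposition \ref{lem7}, I would dualize via \eqref{cd} and split time: working in the common graphical representation I couple the primal run from $\mu$ on the flows of $[0,2]$ with the dual run from $\zeta_B$ on the flows of $[2,t+2]$, so that
$$
P^\mu(\zeta_{t+2}^{\BT_d}\cap\zeta_B\text{ is odd})=P^{(\mu,\zeta_B)}(\zeta_2^{\BT_d}\cap\tilde\zeta_t^{\BT_d}\text{ is odd}).
$$
On $\Omega_\infty$ the dual survives, and since $\beta(d)\ne\frac1{\sqrt d}$ the orbit $[\tilde\zeta_t^{\BT_d}]$ is not positive recurrent by Lemma \ref{lem72}, so Lemma \ref{lem73} yields $|\tilde\zeta_t^{\BT_d}|\to\infty$ in probability on $\Omega_\infty$. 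Combined with the richness estimate already established for dense and translation invariant initial laws in the proofs of Propositions \ref{lem57} and \ref{lem7} (for every $\ep,N$ there is $K$ with $P^\mu(|\zeta_1^{\BT_d}\cap A|\le N)<\ep$ once $|A|\ge K$), this shows the dual meets arbitrarily many sites of the primal at the splitting time.

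The main obstacle, and the only genuine departure from Proposition \ref{lem7}, is manufacturing the randomizing coins. For $\delta>0$ these were the independent death marks $g_x=\ind_{\{T_n^{x,\delta}\cap(\{x\}\times[1,2])=\emptyset\}}$; for the voter model $\delta=0$ there are no free death marks, and a site cut off from all flows simply freezes and contributes nothing random. Instead I would randomize through the voter flips over $[1,2]$: call $x\in\tilde\zeta_t^{\BT_d}$ \emph{good} if the ball $B_x(R)$ is disconnected from its complement by the $[1,2]$-flows and if $\zeta_1^{\BT_d}$ restricted to $B_x(R)$ contains both a $0$ and a $1$. For a good $x$ the value $\zeta_2^{\BT_d}(x)$ is measurable with respect to the flows inside $B_x(R)$ together with the frozen profile $\zeta_1^{\BT_d}|_{B_x(R)}$; the disagreement makes the threshold-one voter active inside the block, so $\ind_{\{x\in\zeta_2^{\BT_d}\}}$ is a coin with $p_x\in[\ep_0,1-\ep_0]$ for some fixed $\ep_0>0$, and for well separated good sites these coins are conditionally independent given $\CG_t=\sigma(\zeta_1^{\BT_d},\tilde\zeta_t^{\BT_d})$. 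The hypotheses on $\mu$ enter precisely here: since $\mu(\zeta=\textbf{0}\text{ or }\textbf{1})=0$ (equivalently, by double denseness of $\zeta$, which forces a $0$ and a $1$ in every $B_x(N)$), disagreement blocks have positive density, so the same richness estimate applied to the set of good sites gives that their number $|V_t|$ tends to infinity on $\Omega_\infty$.

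With these coins in hand the conclusion follows as before. Writing the parity of $\zeta_2^{\BT_d}\cap\tilde\zeta_t^{\BT_d}$ as a conditionally determined contribution from the non-good sites plus $\sum_{x\in V_t}\ind_{\{x\in\zeta_2^{\BT_d}\}}$ (as in the treatment of $h$ in the proof of Proposition \ref{lem7}), Lemma 2.3 of \cite{bramson1991annihilating} gives
$$
\Big|P^{(\mu,\zeta_B)}(\zeta_2^{\BT_d}\cap\tilde\zeta_t^{\BT_d}\text{ is odd}\mid\CG_t)-\tfrac12\Big|\le(1-2\ep_0)^{|V_t|}\to0
$$
in probability on $\Omega_\infty$, while off $\Omega_\infty$ the intersection is eventually empty. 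Taking expectations and using bounded convergence yields $P^\mu(\zeta_{t+2}^{\BT_d}\cap\zeta_B\text{ is odd})\to\tfrac12\tilde P^{\zeta_B}(\Omega_\infty)=\mu_{1/2}^d(\zeta\cap\zeta_B\text{ is odd})$, as required. I expect the delicate part to be the uniform two-sided bound $p_x\in[\ep_0,1-\ep_0]$ together with the conditional independence of the coins, i.e.\ choosing the block radius $R$ and the good-site event so that, despite the absence of spontaneous death, the voter flips act as independent bits bounded away from $0$ and $1$.
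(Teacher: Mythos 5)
Your architecture is the paper's: reduce to parities of $\zeta_t^{\BT_d}\cap\zeta_B$, dualize, get $|\tilde\zeta_t^{\BT_d}|\to\infty$ on $\Omega_\infty$ from Lemmas \ref{lem72}--\ref{lem73}, manufacture randomizing coins out of local disagreements in the absence of spontaneous death, and close with Lemma 2.3 of \cite{bramson1991annihilating}. The one step you implement differently --- the coins --- is also where your proposal has a genuine gap. If a good site $x$ is defined by isolating the whole ball $B_x(R)$, then the time-$2$ states of the \emph{other} dual particles $y\in\tilde\zeta_t^{\BT_d}\cap B_x(R)\setminus\{x\}$ are driven by the very same flows inside the block that produce your coin $\ind_{\{x\in\zeta_2^{\BT_d}\}}$. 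Hence the ``conditionally determined contribution from the non-good sites'' is not determined given $\CG_t=\sigma(\zeta_1^{\BT_d},\tilde\zeta_t^{\BT_d})$ (nor given the flows outside the good blocks), and the quantity playing the role of $h$ is correlated with the coins, so Lemma 2.3 of \cite{bramson1991annihilating} does not apply as stated. To repair this you would have to either take the coin to be the parity of the whole block's contribution $\sum_{y\in\tilde\zeta_t^{\BT_d}\cap B_x(R)}\zeta_2^{\BT_d}(y)$ and re-prove a two-sided nondegeneracy bound for \emph{that} quantity, or restrict to good sites whose blocks contain no other dual particle and re-establish $|V_t|\to\infty$; neither is automatic. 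Separately, the uniform bound $p_x\in[\ep_0,1-\ep_0]$ over all disagreeing profiles on $B_x(R)$ is only ``expected to be delicate'' in your write-up; it is true (finitely many profiles on a fixed ball, irreducibility of the restricted dynamics), but it is part of the proof, not a remark.

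The paper sidesteps both problems with a leaner construction: the randomizing set is $\bar\zeta_1^{\BT_d}\cap\tilde\zeta_t^{\BT_d}$, where $\bar\zeta_1^{\BT_d}$ is the set of sites disagreeing with their parent at time $1$ (its density is bounded below by dualizing the two-point event $\zeta_1(x)\ne\zeta_1(\overleftarrow{x})$ from the pair $\{x,\overleftarrow{x}\}$ and using double denseness, which is where the hypothesis on $\mu$ enters --- the same place you put it). A site is isolated when the only flow event allowed near $\{x,\overleftarrow{x}\}$ during $[1,2]$ is an arrow from $\overleftarrow{x}$ alone to $x$; then $\overleftarrow{x}$ is frozen, $\zeta_2^{\BT_d}(x)$ equals $\zeta_1^{\BT_d}(x)$ flipped exactly when such an arrow occurs, and the coin is a single Poisson indicator with explicit parameter affecting only the one site $x$. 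This makes the determined-part-plus-independent-coins decomposition immediate and yields the quantitative bound $(1-2e^{-1})^{|V_t|}$. I would recommend either adopting that construction or carrying out one of the two repairs above before the argument can be considered complete.
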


\begin{proof}Let 
  \begin{center}$\Omega_{\infty}=\{\tilde{\zeta}^{\BT_d}_t\ne\textbf{0}$ for all $t $$\}$.
  \end{center}

By annihilating duality \eqref{2_cd}, it suffices to prove that 
$$P^{\mu,\zeta_A}( \zeta_{2}^{\BT_d}\cap\tilde{\zeta}^{\BT_d}_t\text{ is odd })\to\frac{1}{2}\tilde{P}^{\zeta_A}(\Omega_\infty)$$
when $\tilde{P}^{\zeta_A}(\Omega_\infty)>0$. Note that by the transition rates of $\tilde{\zeta}^{\zeta_A,\BT_d}_t$,  $|\tilde{\zeta}^{\zeta_A,\BT_d}_t|$ is always odd when $|A|$ is odd, so that $\tilde{P}^{\zeta_A}(\Omega_\infty)=1$ when $|A|$ is odd. 

Define
\begin{center}$
\bar{\zeta}^{\BT_d}_1=\{x\in \BT_d:\zeta^{\BT_d}_1(x)=1, \zeta^{\BT_d}_1(\overleftarrow{x})=0$ or $\zeta^{\BT_d}_1(x)=0, \zeta^{\BT_d}_1(\overleftarrow{x})=1$\}\end{center}where $\overleftarrow{x}$ is the the parent of $x$.

 Since $|\tilde{\zeta}_t^{\BT_d}|\to\infty$ in probability on $\Omega_{\infty}$ by Lemma \ref{lem73}, applying the proof of \eqref{590} in Proposition \ref{lem57}, in order to prove
\begin{center}$|\bar{\zeta}_1^{\BT_d}\cap\tilde{\zeta}_t^{\BT_d}|\to\infty$ in probability under $P^{\mu,\zeta_A}$ on $\Omega_{\infty}$,
\end{center}
we only need to show that when $\mu=\delta_\zeta$ for some doubly dense configuration $\zeta$, we have
$$\inf\limits_xP^\zeta(\bar\zeta_1^{\BT_d}(x)=1)>0.$$

Let $\CI_x=\{\eta:\exists y, z \in B_N(x),\zeta(y)=0,\zeta(z)=1\}$, 
 \bea\label{7111}\inf\limits_xP^\zeta(\bar\zeta_1^{\BT_d}(x)=1)
 &\geq\inf\limits_{x}\inf\limits_{\zeta\in\CI_x}P^\zeta(\bar\zeta_1^{\BT_d}(x)=1)
 \\&=\inf\limits_{\zeta\in\CI_o}P^\zeta(\bar\zeta_1^{\BT_d}(o)=1)\quad\text{ by translation invariance}
 \\&=\inf\limits_{\zeta\in\CI_o}P^{\zeta_{\{o,\overleftarrow{o}\}}}(\tilde{\zeta}^{\BT_d}_1\cap\zeta\text{ is odd})\quad\text{ by duality}
 \\&\geq \inf\limits_{\zeta\in\CI_o}P^{\zeta_{\{o,\overleftarrow{o}\}}}(\tilde{\zeta}^{\BT_d}_1\subset B_N(o)\text{ and }\tilde{\zeta}^{\BT_d}_1 \cap\zeta\text{ is odd})
\\&=\inf\limits_{\zeta\subseteq B_N(o),\zeta\in\CI_o}P^{\zeta_{\{o,\overleftarrow{o}\}}}(\tilde{\zeta}^{\BT_d}_1\subseteq B_N(o),\tilde{\zeta}^{\BT_d}_1 \cap\zeta\text{ is odd})
\\&=p>0\eea 
where the last equality holds because that there are finite configurations satisfying $\zeta\subseteq B_N(o)$ and each has a subset of odd number of vertices.

Let \begin{center}$U_t=\bar{\zeta}_1^{\BT_d}\cap\tilde{\zeta}_t^{\BT_d},
V_t=\{x:x\in U_t, \text{ $x$ is isolated}\}$.\end{center}
where $x$ is isolated if and only if $T_n^{y,S_y}\cap [1,2]=\emptyset$ for any $S_y\cap\{x,\overleftarrow{x}\}\ne\emptyset$, or $(y,S_y)=(x,S_x)$ for any $S_x$, or $(y,S_y)=( \overleftarrow{x},S_{\overleftarrow{x}})$ for any $S_{\overleftarrow{x}}\ne\{x\}$. 
It makes sure that
\bea&\text{$\forall x\in V_t $, if ${\zeta}^{\BT_d}_1(x)=1$, then $\zeta^{\BT_d}_2(x)=0$ iff $\text{there is no arrow from} \overleftarrow{x}\text{ to $x$ at [1,2]}$,}
\\&\text{$\forall x\in V_t $, if ${\zeta}^{\BT_d}_1(x)=0$, then $\zeta^{\BT_d}_2(x)=1$ iff $\text{there is at least an arrow from} \overleftarrow{x}\text{ to $x$ at [1,2]}$,}\eea
 
so that \bea\label{73}
|\bar{\zeta}_{2}^{\BT_d}\cap\tilde{\zeta}^{\BT_d}_t\cap V_t|=&\sum\limits_{x\in V_t,\zeta_1^{\BT_d}(x)=1}1_{\{\text{there is no arrow from} \overleftarrow{x}\text{ to $x$ at [1,2]}\}}\\&+\sum\limits_{x\in V_t,\zeta_1^{\BT_d}(x)=0}1_{\{\text{there is at least an arrow from} \overleftarrow{x}\text{ to $x$ at [1,2]}\}}.
\eea

Let 
\bea\label{72}
&g_x=1_{\{{\zeta}^{\BT_d}_1(x)=1,\text{there is no arrow from} \overleftarrow{x}\text{ to $x$ at [1,2]}\}}+1_{\{{\zeta}^{\BT_d}_1(x)=0,\text{there is at least an arrow from } \overleftarrow{x}\text{ to $x$ at [1,2]}\}},
\\&h=1-\{|\bar{\zeta}_{2}^{\BT_d}\cap\tilde{\zeta}^{\BT_d}_t\cap V^c_t| \text{ mod 2}\},
\\&\CG_t=\sigma(V_t,h,\zeta_1^{\BT_d}).\eea

Note that $\{x\in V_t\}_{x\in U_t}$ are 2-dependent, thus $|V_t|\to \infty$ in probability under $P^{\mu,\zeta_A}$ on $\Omega_\infty$.
Since given $ \CG_t$, $V_t,\zeta_1^{\BT_d}$ and $h$ are constant and $\{g_x, x\in V_t\}$ are mutually independent and independent of $ \CG_t$, by \eqref{73},  \eqref{72}, Lemma 2.3 of \cite{ bramson1991annihilating}, 
\begin{center}$|P^{\mu,\zeta_A}( \bar{\zeta}_{2}^{\BT_d}\cap\tilde{\zeta}^{\BT_d}_t\text{ is odd } |\CG_t)-\frac{1}{2}|=|P^{\mu,\zeta_A}(\sum\limits_{x\in V_t}g_x=h \text{  mod 2}|\CG_t)-\frac{1}{2}|\leq(1-2e^{-1})^{|V_t|}\to 0$ $\quad$ \end{center}in probability on $\Omega_{\infty}$.\\
    Taking expectation and by bounded convergence theorem,\begin{align*}P^{\mu}( \zeta_{2+t}\cap\zeta_A\text{ is odd } |\Omega_{\infty})=P^{\mu,\zeta_A}( \zeta_{2}^{\BT_d}\cap\tilde{\zeta}^{\BT_d}_t\text{ is odd } |\Omega_{\infty})\to\frac{1}{2}.
 \end{align*}

 \end{proof}

Inspired by the proof of Proposition 3.5 of \cite{handjani1999complete}, we will use a comparison of $\zeta^{\BT_d}_{t}$ and $\eta^{\varepsilon,\BT_d}_{t}$ for some $\delta>0$ to prove the complete convergence theorem for $\zeta^{\BT_d}_{t}$ when it survives strongly.

{\bf Proof of Theorem \ref{thm72}:}
\begin{proof}
When $\beta(d)<\frac{1}{\sqrt{d}}$, the result follows from Proposition \ref{prop71}, so that we only need to prove the result when $\zeta^{\BT_d}_{t}$ survives strongly.
Since 0 and 1 are equivalent, we may as well assume that $\sum_{x}[1-\zeta(x)]=\infty$. Then it suffices to prove that for any finite $B,\zeta\ne\textbf{0}$, 
$$
P^\zeta(\zeta^{\BT_d}_{t}\cap B\text{ is odd }|\tau_{\textbf{0}}=\infty)\to\frac{1}{2}\mu(\zeta:\zeta\cap\zeta_B\text{ is odd })=\frac{1}{2}\tilde P^{\zeta_B}(\Omega_{\infty} )
$$ when $\tilde P^{\zeta_B}(\Omega_{\infty} )>0$.

From the argument of Theorem 4.65 (c) of \cite{liggett1999stochastic}, we can prove that
\bea\label{70}
\text{$\eta^{\varepsilon,\BT_d}_t$ does not survive strongly when }\varepsilon=\varepsilon_2(d).
\eea   
 It follows that $\varepsilon_2(d)>0$, therefore there exists $\varepsilon>0$ such that $\eta^{\varepsilon,\BT_d}_t$ survives strongly so that we have the domination $\eta^{\varepsilon,\BT_d}_t\le\zeta^{\BT_d}_{t}\le\hat{\zeta}^{\varepsilon,\BT_d}_t$, where $\hat{\zeta}^{\varepsilon,\BT_d}_t$ denotes the threshold-one voter model with spontaneous birth rate $\varepsilon$. 
 By Lemma \ref{lem44} and Remark \ref{rmk62}, for any $\epsilon>0$, choose $N$ such that for any finite subset $D $ satisfying $|D|\geq N$, 
\bea\label{D}
&P^{\zeta_D}(\zeta^{\BT_d}_t\ne\emptyset \text{ for all t })>1-\epsilon ,
\\&P^{\zeta_D}(\eta^{\varepsilon,\BT_d}_t\ne\emptyset \text{ for all t })>1-\ep.
\eea 
It is easy to verify that $|\zeta^{\BT_d}_t|\to\infty$ a.s. on $\{\tau_{\textbf{0}}=\infty\}$, thus we can choose $U$ such that 
\bea\label{90}
P^{\zeta}(|\zeta^{\BT_d}_t|\geq N|\tau_{\textbf{0}}=\infty)>1-\epsilon
\eea for any $t\geq U$.

 By Theorem \ref{thm62}, there exists translation invariant measures $\nu_a$ and $\nu_b$ such that

\bea\label{7_1}
&\delta_{\zeta_D}S^{\varepsilon}(t)\to\ep'\nu_b+[1-\ep'] \delta_{\textbf{0}},
\\&\delta_{\zeta_D}\hat S^{\varepsilon}(t)\to \nu_a,
\eea
where $\ep'=P^{\zeta_D}(\eta^{\varepsilon,\BT_d}_t \text{ dies out })<\ep$.

Let 
\bea\label{7_2}\tilde{\nu}_b=\epsilon\delta_0+(1-\epsilon)\nu_b.\eea

 By Proposition \ref{prop71},
 $$\tilde{\nu}_bU(t)\to\epsilon\delta_0+(1-\epsilon)\mu_{1/2}^d, \nu_aU(t)\to\mu_{1/2}^d.$$
Observe that
$$1_{\{\zeta^{\BT_d}_t\cap B\text{ is odd }\}}=\frac{1}{2}[1-\prod_{x\in B}[1-2\zeta^{\BT_d}_t(x)]],$$ which is a linear combination of at most $2^{|B|}$ increasing functions of the form $\prod_{x\in C}\zeta^{\BT_d}_t(x),C\subseteq B$, so that for any sufficient large $T$, 
\begin{center}$|P^{\rho}(\zeta^{\BT_d}_T\cap B\text{ is odd })-\frac{1}{2}\tilde P^{\zeta_B}(\Omega_{\infty} )|<2^{|B|+1}\epsilon$\end{center} uniformly for any $\tilde{\nu}_b\le\rho\leq\nu_a$.

Then  for any $D$ satisfying $|D|\ge N$, for any sequence $\{ t_n\}$, since the space of all probability measures is compact, there exists $\{r_n\}\subset\{t_n-T-U\}$ such that $\zeta_DS(r_n)\to\rho$. Then by \eqref{7_1} and \eqref{7_2}, $\tilde{\nu}_b\leq\rho\leq\nu_a$. 
It follows that
\bea\label{91}
\lim\limits_{n\to\infty}|P^{\zeta_D}(\zeta^{\BT_d}_{r_n+T}\cap B\text{ is odd })-\frac{1}{2}\tilde P^{\zeta_B}(\Omega_{\infty} )|= |P^{\rho}(\zeta^{\BT_d}_T\cap B\text{ is odd })-\frac{1}{2}\tilde P^{\zeta_B}(\Omega_{\infty} )|<2^{|B|+1}\epsilon.\eea

By \eqref{D} and \eqref{91},
\bea\label{haha}\limsup\limits_{n\to\infty}|P^{\zeta_D}(\zeta^{\BT_d}_{r_n+T}\cap B\text{ is odd }|\tau_{\textbf{0}}=\infty)-\frac{1}{2}\tilde P^{\zeta_B}(\Omega_{\infty} )|<2^{|B|+2}\epsilon.\eea
By \eqref{90} and \eqref{haha},
\begin{center}$\limsup\limits_{n\to\infty}|P^\zeta(\zeta^{\BT_d}_{r_n+T+U}\cap B\text{ is odd }|\tau_{\textbf{0}}=\infty)-\frac{1}{2}\tilde P^{\zeta_B}(\Omega_{\infty} )|<2^{|B|+3}\epsilon.$\end{center}
Then we can get the desired result by letting $\epsilon\to 0$.
\end{proof}

% !TEX root = TVM_0516.tex
\section{Appendix}
\subsection{Appendix (A)}\label{Appendix (A)}
\label{appendix}
{\bf Proof of Theorem \ref{survive 2}:}
 Recall the definition
\bea
\label{test_function_1}
 f(A)=\lim\limits_{\epsilon\to 0}\frac{1-\nu_{\epsilon}(\eta=0 \text{ on A)}}{\nu_{\epsilon}(\eta(x)=1)}
\eea
where $\nu_{\epsilon}$ is the translation invariant measure on $\BT_2$ satisfying the following conditions: 
\begin{itemize}
\item$\nu_{\epsilon}(\eta(x)=1)=\frac{\epsilon}{q+\epsilon}$.
\item $\nu_{\epsilon}(\eta(x)=0|\eta(y)=1)=q$, $\nu_{\epsilon}(\eta(x)=0|\eta(y)=0)=1-\epsilon$ when $x\sim y$.
\item Conditional on $\eta(x)$, $\eta(x_{(1)})$, $\eta(x_{(2)})$, $\eta(x_{(3)})$ are independent with respect to $\nu_{\epsilon}$, where $x_{(i)},1\le i\le 3$ are the neighbors of $x$.
\end{itemize}
 One may explicitly construct such measure by defining a 0-1 random variable on $o$ according to its marginal and then recursively define them on $\partial B_n(o)$ in each step.

The existence of $f(A)$ is easy to prove.
For finite and nonempty $A$, denote $y_{A}$ as a site in $A$ with the minimum generation. For a configuration $\psi\in \{0,1\}^A$, define subset $D_{\psi}=\{y:\psi(y)=1\}$.

Note that $\tilde{\xi}^{\lambda,\BT_d}_t$ is the Markov process on the countable space $Y$ now, where $Y$ is the set of all finite subsets of $\BT_d$, thus the derivation of $E^{\xi_A}f(\tilde{\xi}^{\lambda,\BT_d}_t)$ is applicable even though $f$ is discontinuous in $\{0,1\}^{\BT_d}$ on the ordinary topology. 

Denote by $\CB$ the set of the components of $A$, 
 \bea\label{430}
 h(A)&=\frac{d}{dt}E^{\xi_A}f(\tilde{\xi}^{\lambda,\BT_d}_t)|_{t=0}
 \\&=\sum\limits_{B}q(A,B)[f(B)-f(A)]
 \\&=\lambda\sum\limits_{x\in A}\lim\limits_{\epsilon\to 0}\frac{\nu_{\epsilon}(\eta=0 \text{ on A}, \exists x*\sim x \text{ s.t.} \eta(x*)=1)}{\nu_{\epsilon}(\eta(x)=1)}-\sum\limits_{x\in A}\nu(\eta=0 \text{ on A}\backslash\{x\}|\eta(x)=1)
 \\&=\sum\limits_{B\in\CB}\sum\limits_{x\in B}\left[\lambda\sum\limits_{ \text{
\tiny $\begin{array}{c}
\psi\in\{0,1\}^{\CN_x}\\
D_{\psi}\ne\emptyset\end{array}$%??????????? 
}}\nu(\eta=0 \text{ on } A,\eta=\psi \text{ on }\CN_x|\eta(y_{D_\psi})=1)
\right] \sum\limits_{x\in A}\nu(\eta=0 \text{ on A}\backslash\{x\}|\eta(x)=1)
\\&\overset{\vartriangle}{=} \sum\limits_{B\in\CB}h_B(A)- \sum\limits_{x\in A}\nu(\eta=0 \text{ on A}\backslash\{x\}|\eta(x)=1)
 \eea
where $h_B(A)$ can be seen as the contributions of $B$. 

Next we want to show that \eqref{430} has the same form as (4.2) in \cite{liggett1996multiple}. Since there is not much difference in the case when $|B|=1$ or $|B|>1$, we just prove the case $|B|=1$ and the other case follows from the same strategy.

When $B=\{x\}$ is a singon, $\CN_x\cap A=\{x\}$. Define
\begin{align*}\gamma(y)=1-\nu(\eta=0\text{ on } A\cap S_y(x)|\eta(y)=1)\end{align*}
where $S_y(x)$ is the conected component of $\BT_d\backslash\{x\}$ containing $y$. Then\bea \label{43}
h_B(A)=&\lambda[\nu(\eta=0 \text{ on  }A,\eta(x_{(1)})=1,\eta(x_{(2)})=0,\eta(x_{(3)})=0|\eta(x_{(1)})=1)
\\&+\nu(\eta=0 \text{ on } A,\eta(x_{(1)})=1,\eta(x_{(2)})=1,\eta(x_{(3)})=0|\eta(x_{(1)})=1)\\&
+\nu(\eta=0 \text{ on  }A,\eta(x_{(1)})=1,\eta(x_{(2)})=1,\eta(x_{(3)})=1|\eta(x_{(1)})=1)\\&
+\nu(\eta=0 \text{ on  }A,\eta(x_{(1)})=1,\eta(x_{(2)})=0,\eta(x_{(3)})=1|\eta(x_{(1)})=1)\\&
+\nu(\eta=0 \text{ on  }A,\eta(x_{(1)})=0,\eta(x_{(2)})=1,\eta(x_{(3)})=0|\eta(x_{(2)})=1)\\&
+\nu(\eta=0 \text{ on  }A,\eta(x_{(1)})=0,\eta(x_{(2)})=1,\eta(x_{(3)})=1|\eta(x_{(2)})=1)\\&
+\nu(\eta=0 \text{ on  }A,\eta(x_{(1)})=0,\eta(x_{(2)})=0,\eta(x_{(3)})=1|\eta(x_{(3)})=1)]
\\&\overset{\vartriangle}{=}\lambda\times[(1)+(2)+(3)+(4)+(5)+(6)+(7)].
\eea
Since the proof for (1), (5) and (7) are similar, we just take (1) for an example.
 \begin{align*}&\nu(\eta=0 \text{ on A},\eta(x_{(1)})=1,\eta(x_{(2)})=0,\eta(x_{(3)})=0|\eta(x_{(1)})=1)\\&=\nu(\eta=0 \text{ on A}\cap[ \BT_2\backslash S_{x_{(1)}}(x)],\eta(x_{(2)})=0,\eta(x_{(3)})=0|\eta(x)=0)\nu(\eta(x)=0|\eta(x_{(1)})=1)\\&\times\nu(\eta=0 \text{ on A}\cap S_{x_{(1)}}(x)|\eta(x_{(1)})=1)
 \\&=q\nu(\eta=0 \text{ on A}\cap S_{x_{(1)}}(x)|\eta(x_{(1)})=1)
\\&=q(1-\gamma(x_{(1)})).
 \end{align*}
Similarly, we take (2) for an example to prove that (2), (3), (4) and (6) all equal 0.
\begin{align*}&\nu(\eta=0 \text{ on A},\eta(x_{(1)})=1),\eta(x_{(2)})=1,\eta(x_{(3)})=0|\eta(x_{(1)})=1)\\&=\nu(\eta=0 \text{ on A}\cap( T_2\backslash S_{x_{(1)}}(x)),\eta(x_{(2)})=1,\eta(x_{(3)})=0|\eta(x)=0)\nu(\eta(x)=0|\eta(x_{(1)})=1)\\&\times\nu(\eta=0 \text{ on A}\cap S_{x_{(1)}}(x)|\eta(x_{(1)})=1)
 \\&\leq \nu(\eta(x_{(2)})=1|\eta(x)=0)\\&=0.
 \end{align*}

Thus
\bea\label{48}
h_B(A)=\lambda q\sum\limits_{i=1}^{3}[1-\gamma(x_{(i)})]-\prod_{i=1}^{3}[1-(1-q)\gamma(x_{(i)})],
\eea

which is the same as (4..2) in \cite{liggett1996multiple} such that
\bea\label{417}h(A)= \lambda\sum\limits_{x\in A,y\not\in A,x\sim y}\nu(\eta=0 \text{ on }A|\eta(y)=1)-\sum\limits_{x\in A}\nu(\eta= 0 \text{ on } A|\eta(x)=1)
\eea
 So that by the argument in page 1693 and 1695 in \cite{liggett1996multiple}, $h(A)\geq 0$ when $\lambda\geq 0.6369$. Specifically, 
\bea \sum\limits_{x\in A,y\not\in A,x\sim y}\nu(\eta=0 \text{ on }A|\eta(y)=1)\geq\frac{1}{0.6369}\sum\limits_{x\in A}\nu(\eta= 0 \text{ on } A|\eta(x)=1).\label{410}
\eea

Next we want to get a nonnegative supermartingale. First we have the following lemma:
\begin{lemma}[Page1690 of \cite{liggett1996multiple}] 
\label{fact}
Suppose $\xi_t$ is a pure jump process on a countable set with transition rates $q(A,B)$.  If $f$ satisfies the following conditions for some $0<C, M<\infty$:\\
(i) $ \forall A,\sum\limits_B q(A,B)[f(B)-f(A)]\geq Cf(A)$.\\
(ii) $|f(B)-f(A)|\leq$ M whenever $q(A,B)>0$.\\
(iii) $q(A)\leq Cf(A),\forall A$.\\
Then $f(\xi_{t\land\tau})^{-1}$ is a supermartingale where $\tau=\inf\{t,f(\xi_t)\le c\}$ for some sufficiently large enough $c$.
\end{lemma}

 Now we only need to show that the test  function \eqref{test_function_1} satisfies the conditions in Lemma \ref{fact}. For any $x\in A$, 
\bea
1&\geq f(A)-f(A\backslash\{x\})
\\&=\prod_{x_{(i)}}\nu(\eta=0 \text{ on }A\cap S_{x_{(i)}}(x)|\eta(x)=1)
\\&\ge\prod_{x_{(i)}}\nu(\eta=0 \text{ on }A\cap S_{x_{(i)}}(x),\eta(x(i))=0|\eta(x)=1)
\\&= q^3\label{412}.
\eea

 For any $x\in A$ satisfies $\CN_x\cap A\ne\emptyset$, assume $x_{(1)}\not\in A$, by \eqref{412}
\bea7\geq f(A\cup\CN_x)-f(A)
&=\sum\limits_{ \text{
\tiny $\begin{array}{c}
\psi\in\{0,1\}^{\CN_x}\\
D_{\psi}\ne\emptyset\end{array}$
}}\nu(\eta=0 \text{ on } A,\eta=\psi \text{ on }\CN_x|\eta(y_{D_\psi})=1)
\\&\geq\nu(\eta=0\text{ on }A\cup\{x_{(2)},x_{(3)}\}|\eta(x_{(1)})=1)
\\&\geq q^3.\label{411}
\eea
It follows that 
\bea C_1|A|\leq f(A)\leq C_2|A|.\label{413}\eea
When $\lambda>0.6369$,
 \bea&\sum\limits_B q(A,B)[f(B)-f(A)]
\\&=(\lambda-0.6369)\sum\limits_{x\in A,y\not\in A,x\sim y}\nu(\eta=0 \text{ on }A|\eta(y)=1)
\\&+\left[0.6369\sum\limits_{x\in A,y\not\in A,x\sim y}\nu(\eta=0 \text{ on }A|\eta(y)=1)
-\sum\limits_{x\in A}\nu(\eta=0\text{ on } A\backslash\{x\}|\eta(x)=1)\right]\quad\text{ by \eqref{417}}\\&\geq\frac{\lambda-0.6369}{0.6369}\sum\limits_{x\in A}\nu(\eta=0\text{ on } A\backslash\{x\}|\eta(x)=1) \quad\text{ by \eqref{410}}
\\&\geq C_3|A|\quad \text{by \eqref{412}}
\\&\geq C_4f(A).\quad \text{by \eqref{413}}\label{414}
\eea
When $q(A,B)>0$, by \eqref{411} and \eqref{412}, 
  \bea|f(A)-f(B)|\leq 7\label{415}\eea
 and since $q(A)\leq C_5|A|$, it follows from \eqref{413} that
  \bea 
  q(A) \leq C_6f(A)\label{416}
  \eea
where $C_i,1\leq i\leq 6$ are positive and finite constants. Then \eqref{414}, \eqref{415} and \eqref{416} gives that $\tilde{\xi}^{\lambda,A,\BT_d}_t$ satisfies the conditions of Lemma \ref{fact}, thus $f(\xi^{\lambda,A,\BT_d}_{t\land\tau})^{-1}$ is a nonnegative supermartingale. By the argument in Section \ref{sec_32}, we complete the proof. \qed

\subsection{Appendix (B)}\label{Appendix (B)}
In order to prove Lemma \ref{le64}, we need the following lemma, which can be proved easily by strong Markov property, we omit the proof here.
 \begin{lemma}\label{le63} 
For any $\lambda,d$,

\bea\label{641}\lim\limits_{n\to\infty}[\sup\limits_t P^o(x_n\in\xi^{\lambda,\hat{\BT}^d_{x_n}}_t)]^{1/n}=\beta(\lambda,d).
\eea
\end{lemma}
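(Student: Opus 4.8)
The plan is to sandwich the quantity $b_n:=\sup_t P^o(x_n\in\xi^{\lambda,\hat{\BT}^d_{x_n}}_t)$ and identify its exponential rate as $\beta(\lambda,d)$. Write $c_n:=P^o(x_n\in\xi^{\lambda,\hat{\BT}^d_{x_n}}_t\text{ for some }t)$ for the ``ever infected'' version of the \emph{restricted} process. The easy direction is the chain $b_n\le c_n\le P^o(x_n\in\xi^{\lambda,\BT_d}_t\text{ for some }t)$: the first inequality is trivial, and the second is the domination of the restricted process by the full one (the generalization of \eqref{25} to $\hat{\BT}^d_{x_n}\subseteq\BT_d$). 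Taking $n$-th roots gives $\limsup_n b_n^{1/n}\le\beta(\lambda,d)$. The core observation that makes the reverse work is a tree-geometry identity: $c_n$ actually equals the \emph{full} ``ever infected'' probability, so $c_n^{1/n}\to\beta(\lambda,d)$. To see this, couple both processes from $\{o\}$ on one graphical representation and let $\tau_n=\inf\{t:x_n\in\xi^{\lambda,\BT_d}_t\}$. If some strict descendant $y$ of $x_n$ were infected at a time $t$, the spatial trace of the certifying open path is a walk in $\BT_d$ from $o$ to $y$, and since $x_n$ is the unique cut vertex separating $o$ from $y$, this walk visits $x_n$ at some $s\le t$; then $x_n\in\xi^{\lambda,\BT_d}_s$, so $\tau_n\le t$. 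Hence for $t<\tau_n$ no descendant of $x_n$ is infected, the killing defining $\hat{\BT}^d_{x_n}$ is inactive, and the two processes coincide up to $\tau_n$; combined with the domination this forces equality of the two ``ever infected'' events.

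Next I would show that $b_n$ is supermultiplicative, $b_{n+m}\ge b_n b_m$. Fix times $s,u$ and apply the Markov property of $\xi^{\lambda,\hat{\BT}^d_{x_{n+m}}}_t$ at time $s$; restricting to the event $\{x_n\in\xi_s\}$ and using attractiveness (the configuration dominates $\{x_n\}$) gives
$$P^o\big(x_{n+m}\in\xi^{\lambda,\hat{\BT}^d_{x_{n+m}}}_{s+u}\big)\ \ge\ P^o\big(x_n\in\xi^{\lambda,\hat{\BT}^d_{x_{n+m}}}_s\big)\cdot P^{x_n}\big(x_{n+m}\in\xi^{\lambda,\hat{\BT}^d_{x_{n+m}}}_u\big).$$
Since $\hat{\BT}^d_{x_n}\subseteq\hat{\BT}^d_{x_{n+m}}$, the first factor is at least $P^o(x_n\in\xi^{\lambda,\hat{\BT}^d_{x_n}}_s)$ by \eqref{25}; and the tree automorphism shifting the lineage by $n$ sends $x_n\mapsto o$, $x_{n+m}\mapsto x_m$, and $\hat{\BT}^d_{x_{n+m}}\mapsto\hat{\BT}^d_{x_m}$, so the second factor equals $P^o(x_m\in\xi^{\lambda,\hat{\BT}^d_{x_m}}_u)$ by translation invariance. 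Taking suprema over $s$ and $u$ yields $b_{n+m}\ge b_n b_m$, whence by Fekete's lemma $\gamma:=\lim_n b_n^{1/n}$ exists and equals $\sup_n b_n^{1/n}$; in particular $\gamma\le\beta(\lambda,d)$.

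It remains to prove $\gamma\ge\beta(\lambda,d)$, i.e.\ that the fixed-time probability captures the full exponential mass of $c_n$. Work directly with the process on $\hat{\BT}^d_{x_n}$ and let $\tau_n$ now denote its first infection time of $x_n$, so $P^o(\tau_n<\infty)=c_n$. The elementary persistence bound is that if $\tau_n\in[k,k+1)$ then, with probability at least $e^{-1}$, the rate-one death clock at $x_n$ has no point in $[\tau_n,k+1]$, and then $x_n\in\xi^{\lambda,\hat{\BT}^d_{x_n}}_{k+1}$; hence $P^o(x_n\in\xi_{k+1})\ge e^{-1}P^o(\tau_n\in[k,k+1))$. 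If one localizes the first-passage time to $O(n)$ unit windows, a pigeonhole over $k=0,\dots,\lceil Cn\rceil$ produces a deterministic $k^\ast$ with $P^o(\tau_n\in[k^\ast,k^\ast+1))\ge c_n/(2(Cn+2))$, giving $b_n\ge P^o(x_n\in\xi_{k^\ast+1})\ge e^{-1}c_n/(2(Cn+2))$; taking $n$-th roots then yields $\gamma\ge\lim_n c_n^{1/n}=\beta(\lambda,d)$, completing the proof together with the first two paragraphs. The main obstacle is exactly the time-localization input, namely that conditionally on ever reaching $x_n$ this happens by time $Cn$ with probability at least $\tfrac12 c_n$ (more weakly, by any sub-exponential time); this is a finite/linear-speed-of-propagation statement for the front of the infected region, which I would establish by a standard restart/subadditivity argument (using the exponential growth bound $E^o|\xi^{\lambda,\BT_d}_t|\le C[\Phi(\lambda)]^t$ of Lemma \ref{le31}), and it is the only step that is not soft. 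Everything else reduces to monotonicity, the tree cut-vertex identity, translation invariance, and Fekete's lemma.
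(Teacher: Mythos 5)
Your skeleton matches the paper's proof in its essential ingredients: the upper bound by monotonicity, the cut-vertex identity showing that the process restricted to $\hat{\BT}^d_{x_n}$ and the full process hit $x_n$ at the same (first) time, and the conversion of an ``ever infected by time $T$'' probability into a fixed-time probability by a pigeonhole over unit windows together with a persistence factor (the paper's factor is $P^o(o\in\xi^{\lambda,\hat{\BT}^d_o}_t\ \forall t\le 1)$, playing the role of your $e^{-1}$). Your Fekete step for $b_n$ is harmless but unnecessary: once both one-sided bounds are in place the limit exists automatically.

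The one step you flag as ``not soft'' is exactly the step you have not actually supplied, and the specific statement you ask for is both stronger than needed and not obtainable from the tool you name. The bound $E^o|\xi^{\lambda,\BT_d}_t|\le C[\Phi(\lambda)]^t$ of Lemma \ref{le31} is an \emph{upper} bound on the population and gives no lower bound on how quickly the infection, conditioned to reach $x_n$, actually arrives; a constant-factor localization $P^o(\tau_n\le Cn\mid\tau_n<\infty)\ge\tfrac12$ does not follow from it. The paper closes this gap differently, by supermultiplicativity of the \emph{time-constrained} passage probabilities: with $u_m(k)=P^o(x_m\in\xi^{\lambda,\BT_d}_s\text{ for some }s\le k)$ and $l=\lfloor n/m\rfloor$ one has $u_n(nk)\ge[u_m(k)]^l\,u_{n-ml}(1)$, so after the pigeonhole over $nk$ windows one gets $\liminf_n b_n^{1/n}\ge[u_m(k)]^{1/m}$ for every $k\ge1$, $m\ge2$, and then lets $k\to\infty$ (so $u_m(k)\uparrow c_m$) and $m\to\infty$ (so $c_m^{1/m}\to\beta(\lambda,d)$). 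This yields only $P^o(\tau_n\le C_\epsilon n)\ge(\beta(\lambda,d)-\epsilon)^n$ --- an exponential loss relative to $c_n$ rather than the factor $\tfrac12$ you posit --- but that is exactly enough for the $n$-th-root limit. Replacing your deferred ``restart argument using Lemma \ref{le31}'' by this double limit (it is the same supermultiplicative estimate you already invoke for Fekete, applied to the time-constrained quantities) completes the proof; as written, that step is a genuine gap.
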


{\bf Proof of Lemma \ref{le64}:}
By \eqref{641}, for all $a$ satisfies $ \frac{1}{\sqrt{d}}<a<\beta(\lambda,d)$, $n$ large enough, there exists $ t$ such that
\bea \label{00}P^o(x_n\in\xi^{\lambda,\hat{\BT}^d_{x_n}}_t)> a^n.
\eea
Let
$$B_0=\{o\},B_i=\cup_{x\in B_{i-1}}\{y\in \xi_t^{\lambda,\BT^d_x},|y-x|=n\}$$
 where $\xi^{\lambda,\BT^d_x}_{\cdot}$ are independent copies of $\xi^{\lambda,\BT^d_o}_\cdot$. Then $|B_j|$ is a supercritical branching process since $E|B_1|>(ad)^n>1$ by the domination of $\xi^{\lambda,\BT^d_o}_{jt}$. Let $b=[P^o(x_n\in\xi^{\lambda,\hat{\BT}^d_{x_n}}_t)]^{1/n}\geq a$, by the limit theorem for the supercritical branching processes
\begin{center}$\lim\limits_{j\to\infty}\frac{|B_j|}{(d^nb^n)^j}=X$\end{center}exists a.s. Thus by Theorem 5.3.10 of \cite{durrett2010probability}, $X\not\equiv 0$ so that there exists $\epsilon>0$ such that 
\bea \label{642}P( |B_j|\geq\epsilon(da)^{nj})\geq\epsilon\eea
 for all sufficiently large $j$.

Now let \begin{center}$r_i=P^o(o\in\xi^{\lambda,\BT_o^d}_{2ijt})$.\end{center} 
Note that by strong Markov property, 
\bea\label{01}
P^{x_{jn}}(o\in\xi^{\lambda,\BT_o^d}_{jt})\geq \left[P^{o}(x_n\in\xi^{\lambda,\hat{\BT}_{x_n}^d}_{t})\right]^j\geq a^{nj}.
\eea
By \eqref{01}, strong Markov property and monotonicity,
\bea
r_{i+1}&\geq P^o(\exists |x|=jn, x\in\xi^{\lambda,\BT_o^d}_{(2i+1)jt})P^{x_{jn}}(o\in\xi^{\lambda,\BT_o^d}_{jt})\\
&\ge P^o(\exists |x|=jn, x\in\xi^{\lambda,\BT_o^d}_{(2i+1)jt})a^{jn}\\
&\geq\epsilon a^{jn}[1-(1-r_i)^{\lfloor\epsilon (ad)^{jn}\rfloor}]\quad\text{ by \eqref{642}}\\
&\overset{\vartriangle}{=}f(r_i). 
\eea
Note that $f$ is an increasing function with $f(0)=0$, $f(1)=\epsilon a^{nj}<1$, $f'(0)\geq\epsilon\lfloor\epsilon (a^2d)^{nj}\rfloor>1$ for some $ j$ sufficiently large. So there exists $0<r^*<1$ such that $f(r^*)=r^*$. Since $r_0=1>r^*$, then for any $i$, we have $r_{i+1}\geq f(r_i)\geq f(r^*)=r^*$. To extend to all times, simply 
use\begin{center}$P^o(o\in\xi^{\lambda,\BT_d}_s)\geq e^{-2jt}r_i$ $\quad 2ijt\leq s< 2(i+1)jt$.\end{center}\qed

\bibliography{complete_convergence}
\bibliographystyle{plain}

\end{document}